\documentclass[oneside,11pt]{article}

\usepackage[pdftex,final,bookmarksnumbered,breaklinks,colorlinks,unicode]{hyperref}
\hypersetup{
	final,
	bookmarksnumbered=true,
	bookmarksopen=true,
	bookmarksopenlevel=0,
	unicode=false,
	pdftoolbar=true,
	pdfmenubar=true,
	pdffitwindow=false,
	pdftitle={Robust Ergodic Control of Jump-Diffusion Systems \break under Drift and Intensity Uncertainty},
	pdfdisplaydoctitle=true,
	pdftoolbar=true,
	pdfmenubar=true,
	pdflang={English},
	pdfauthor={Abel Azze, Bernardo D'Auria, Giorgio Ferrari},
	pdfsubject={arXiv paper},
	pdfcreator={Abel Azze},
	pdfproducer={Abel Azze},
	pdfkeywords={stochastic control}{Lévy process},
	pdfnewwindow=true,
	breaklinks=true,
	linkcolor=black,
	citecolor=black,
	filecolor=magenta,
	urlcolor=cyan
}

\usepackage{amsmath,amssymb,amsthm}
\usepackage{bbm}                  % for \mathbbm{1}, used through \Ind
\usepackage{empheq}               % used by empheq environments

\usepackage{graphicx}
\usepackage{xcolor}               % used by \details and draft color macros
\usepackage[labelformat=simple]{subcaption}

\usepackage{ifthen}
\newboolean{details}
\newcommand{\details}[2]{%
  \ifthenelse{\boolean{details}}%
    {\textcolor{purple}{#1}}%
    {#2}%
}

\usepackage{algorithm}
\usepackage{algpseudocode}

\usepackage{booktabs}
\usepackage{enumitem}

\usepackage[top=1.5cm,bottom=2cm,right=1cm,left=1cm]{geometry}
\usepackage{titling}

\usepackage{natbib}

\usepackage{xparse}

\allowdisplaybreaks

\newtheorem{theorem}{Theorem}
\newtheorem{proposition}{Proposition}
\newtheorem{lemma}{Lemma}
\newtheorem{corollary}{Corollary}
\newtheorem{remark}{Remark}
\newtheorem{assumption}{Assumption}
\renewenvironment{proof}{\textit{Proof.}}{\qed\\}

\newcommand{\R}{\mathbb{R}}
\newcommand{\cA}{\mathcal{A}}

\newcommand{\lp}{\left(}   \newcommand{\rp}{\right)}
\newcommand{\lb}{\left[}   \newcommand{\rb}{\right]}
\newcommand{\lrp}[1]{\lp#1\rp}

\newcommand*{\defeq}{\mathrel{\vcenter{
                     \baselineskip0.5ex\lineskiplimit0pt
                     \hbox{\scriptsize.}\hbox{\scriptsize.}
                     }}%
                     =}

\RenewDocumentCommand{\Pr}{oe{_^}}{%
  \operatorname{\mathbb{P}}%
  \IfValueT{#2}{_{#2}}%
  ^{\IfValueTF{#3}{#3}{}}%
  \IfValueT{#1}{\lp #1 \rp}%
}

\NewDocumentCommand{\Esp}{oe{_^}}{%
  \operatorname{\mathbb{E}}%
  \IfValueT{#2}{_{#2}}%
  ^{\IfValueTF{#3}{#3}{}}%
  \IfValueT{#1}{\lb #1 \rb}%
}

\newcommand{\intOp}{\mathbb{I}}
\newcommand{\InfGen}{\mathbb{L}}
\newcommand{\Ind}{\mathbbm{1}}
\newcommand{\rmd}{\mathrm{d}}
\newcommand{\uline}[1]{\underline{#1}}
\newcommand{\oline}[1]{\overline{#1}}
\newcommand{\altBER}[1]{\begingroup\color{black}#1\endgroup}
\newcommand{\BER}[1]{\begingroup\color{violet}#1\endgroup}
\newenvironment{ber}{\color{violet}}{} 
\newcommand{\bb}{\begin{ber}
		\let\BER\altBER}
	\newcommand{\eb}{\end{ber}}

\setboolean{details}{false}

\numberwithin{equation}{section}

\begin{document}
	
	%-----------------------------------------------%
	\title{Robust Ergodic Control of Jump-Diffusion Systems \break under Drift and Intensity Uncertainty}
	\setlength{\droptitle}{-1cm}
	\predate{}%
	\postdate{}%
	\date{}
	%-----------------------------------------------%
	
	%-----------------------------------------------%
	\author{Abel Azze$^{1}$, Bernardo D'Auria$^{2}$, Giorgio Ferrari$^{3}$}
	\footnotetext[1]{Department of Quantitative Methods, CUNEF Universidad (Spain). Corresponding author. Email: abel.guada@cunef.edu}
	\footnotetext[2]{Department of Mathematics ``Tullio Levi Civita'', University of Padova (Italy). Email: bernardo.dauria@unipd.it}
	\footnotetext[3]{Center for Mathematical Economics, Bielefeld University (Germany). Email: giorgio.ferrari@uni-bielefeld.de}
	\maketitle
	%-----------------------------------------------%
	
	\begin{abstract}
		We study a regulation problem for stochastic systems subject to both continuous fluctuations and rare but significant shocks, modeled as a jump-diffusion with uncertainty in both the drift and the jump intensity. Such settings arise in applications including inventory control, cash management, and capacity planning.
		
		We formulate the problem as a robust ergodic singular control problem in which a decision maker applies upward and downward interventions while accounting for model ambiguity through entropy-penalized distortions. The resulting max-min problem involves a long-run average performance criterion. 
		
		We show that the associated Hamilton--Jacobi--Bellman equation reduces to a nonlinear integro-differential free-boundary problem with a tractable structure. The worst-case model exhibits a bang-bang form, and the optimal policy is characterized by reflecting barriers. Under exponentially distributed jumps, the problem further reduces to a system of ordinary differential equations, enabling efficient numerical computation.
		
		% Our numerical analysis quantifies the impact of model ambiguity on optimal decisions. Accounting for uncertainty in both drift and jump intensity significantly alters the inaction region, leading to wider and asymmetric intervention bands. Ignoring ambiguity can result in significant performance losses, and the impact differs across the two components: drift ambiguity and jump-intensity ambiguity interact, with the dominant source of misspecification depending on the underlying drift of the uncontrolled system. These findings highlight the importance of robust regulation in environments with poorly estimated dynamics and rare events.
		
		Our numerical analysis shows that ambiguity in drift and jump intensity can significantly alter the inaction region, leading to wider and asymmetric intervention bands. Ignoring ambiguity can generate substantial performance losses, with the dominant source of misspecification depending on the underlying drift. These findings highlight the importance of robust regulation in environments with poorly estimated dynamics and rare events.
		
	\end{abstract}
	
	\begin{flushleft}
		\small \textbf{Keywords:} robust control, singular stochastic control, jump-diffusion, ergodic control, model uncertainty, free-boundary problems
	\end{flushleft}
	
	\begin{flushleft}
		\small\textbf{OR/MS Classification:} Dynamic programming/optimal control, Stochastic models, Decision analysis: uncertainty, Robust optimization, Continuous-time models
	\end{flushleft}
	
	\begin{flushleft}
		\small\textbf{AMS 2020 Subject Classification:} 93E20, 49L20, 60J75, 90C40
	\end{flushleft}
	
	%-------------------------------------------------%
	\section{Introduction}
	%-------------------------------------------------%
	
	Many operational and economic systems require continuous regulation over long time horizons in the presence of uncertainty and occasional large shocks. 
	Typical examples include inventory and cash management, capacity adjustment, dividend distribution, and harvesting of renewable resources. 
	In these settings, the state of the system evolves stochastically over time, while the decision maker can intervene through upward or downward 
	adjustments that incur proportional costs. The objective is to keep the system within desirable operating levels while minimizing long-run 
	average costs. Problems of this type arise naturally in operations management, finance, and economics, where decisions must be taken repeatedly 
	and the system is expected to operate indefinitely.
	
	Such regulation problems are naturally formulated as singular stochastic control problems, in which the control is a bounded-variation process 
	allowing for instantaneous adjustments of arbitrary size. Since the early work of \citet{Bather_1967_sequential}, singular control has been 
	widely used to model regulation problems in operations management, finance, and economics, including inventory control, cash and dividend 
	management, capacity expansion, equilibrium models, optimal harvesting of renewable resources, and queuing systems; see, among many others, 
	\cite{aid2025stationary, alvarez_optimal_1998, bandini_optimal_2022,cohen2019asymptotic,HarrisonTaksar1983, yamazaki_inventory_2017, merhi2007model, jeanblanc1995optimization, rodosthenous2023two}, as well as \cite{kumar2004numerical} for the numerical analysis of such a class of problems.
	In many applications the relevant performance criterion is the long-run average payoff, which leads to ergodic singular control problems; see, e.g., 
	\cite{menaldi1992singular,taksar1985average,liang2025singular,calvia2025optimal} and references therein. These problems characterize optimal stationary 
	regulation policies and are particularly suitable for systems operating over long horizons, where transient effects become negligible and performance 
	is measured through steady-state behavior.
	
	In many realistic applications, the dynamics of the controlled system are not fully known. Besides continuous fluctuations, the state may be subject 
	to sudden shocks, such as abrupt demand changes, unexpected cash inflows or outflows, catastrophic events in population dynamics, or discontinuous 
	movements in asset prices. Jump-diffusion models provide a natural framework to capture these features, as they combine diffusive uncertainty with 
	rare but potentially large jumps. In practice, however, both the drift and the jump intensity are typically estimated from limited data and may 
	therefore be misspecified. In particular, the frequency of rare events is often difficult to assess, for instance in inventory or production systems 
	with irregular demand surges, in cash-management problems with unpredictable large transactions, or in energy and resource systems subject to failures 
	or environmental shocks. Ignoring such uncertainty may lead to regulation policies that perform poorly over long time horizons and can result in substantial performance losses when rare events are misestimated. This motivates the 
	study of regulation problems in which the controller accounts for ambiguity not only in the drift but also in the intensity of the jump component.
	
	These considerations naturally motivate the study of robust regulation policies under model misspecification. 
	In this paper we consider a long-run singular control problem for a jump-diffusion system in which the decision maker faces ambiguity about both the drift of the diffusion component and the intensity of the jump process. 
	Model ambiguity is incorporated through entropy-type penalties, which allow for adversarial distortions of the reference model while penalizing deviations from it, leading to a robust ergodic singular control problem in which the controller optimizes the long-run performance against the worst-case model within a prescribed ambiguity set.
	
	The worst-case long-run average payoff criterion leads to a nonlinear control problem whose Hamilton--Jacobi--Bellman equation takes the form of an integro-differential free-boundary problem. 
	The combination of singular controls and model uncertainty makes the analysis significantly more involved than in the classical diffusion setting.
	
	Robust and ambiguity-averse stochastic control has received increasing attention in operations research, economics, and finance. 
	Entropy-penalized formulations and worst-case control problems have been studied, among others, by 
	\cite{hansen2001robust,iyengar2005robust,nilim2005robust}. These models lead to max-min control problems and nonlinear Hamilton--Jacobi--Bellman 
	equations, but are usually developed either for diffusion models or for regular control, and only rarely in the presence of singular controls and 
	long-run average criteria. The few recent papers dealing with singular control under drift ambiguity, in a diffusive infinite-horizon discounted 
	setting, include \cite{ferrari2022knightian,ferrari2022optimal,archankul2025singular}.
	
	Despite these developments, the combination of ergodic criteria, singular controls, jump-diffusion dynamics, and ambiguity in both drift and jump intensity appears to have received little attention in the literature. In particular, robust ergodic singular control problems for jump-diffusion systems lead to integro-differential free-boundary problems whose structure differs substantially from the classical diffusion case and requires new analytical arguments.
	
	The main contribution of this paper is to provide a tractable and operationally relevant analysis of a class of robust ergodic singular control problems for jump-diffusion systems with ambiguity in both the drift and the jump intensity. We show that the associated max-min Hamilton--Jacobi--Bellman equation can be reduced to a nonlinear integro-differential free-boundary problem in which the worst-case parameters exhibit a bang-bang structure. This characterization leads to a simple and interpretable optimal policy in the form of a reflecting-barrier strategy and allows us to establish a verification theorem for the ergodic value.
	
	From a computational and operational perspective, we further show that under negative exponentially distributed jumps, constant drift, and quadratic holding costs, the problem reduces to a system of linear ordinary differential equations. This yields a numerically stable and efficient procedure for computing optimal policies and worst-case models, making the framework suitable for practical implementation in operational settings.
	
	Our results demonstrate that robustness considerations typically lead to more cautious regulation rules, but they do not merely expand the inaction region. Instead, they introduce significant asymmetry: while drift ambiguity primarily shifts the location of the reflecting band, intensity uncertainty (reflecting uncertainty about rare, high-impact shocks) tends to widen the cautious zone significantly. Furthermore, variations in intervention costs tilt the band toward the side where adjustments are cheaper. These findings provide clear operational implications for managers in inventory and cash-flow systems, indicating that policies should be adjusted not only in magnitude but also asymmetrically when the frequency of extreme events is uncertain. Moreover, our results highlight that neglecting model ambiguity can lead to systematically suboptimal policies. We quantify the resulting performance loss in long-run average cost and show that the value of robustness depends on the interaction between the baseline drift and the source of ambiguity: in the numerical examples, ignoring ambiguity leads to substantially larger performance losses when the baseline drift is positive, and these losses are especially sensitive to uncertainty in the jump intensity; by contrast, when the baseline drift is negative, the losses are smaller and are more strongly driven by drift ambiguity.
	
	The remainder of the paper is organized as follows. Section~\ref{sec:the_model} introduces the controlled jump-diffusion model under ambiguity. 
	Section~\ref{sec:control_problem} formulates the robust ergodic singular control problem. 
	Section~\ref{sec:hjb_verification} derives the Hamilton--Jacobi--Bellman equation and provides a verification theorem. 
	Section~\ref{sec:optimalpolicy} studies the structure of the optimal policy in the case of negative jumps. 
	Section~\ref{sec:negative_exp_jumps} develops the explicit characterization for exponential jumps. 
	Section~\ref{sec:numerics} presents numerical results and economic interpretations.
	
	%-------------------------------------------------%
	\section{Controlled dynamics under ambiguity}\label{sec:the_model}
	%-------------------------------------------------%
	
	We consider a one-dimensional stochastic system whose state evolves over time under the combined effect of
	continuous fluctuations and random jumps. Such dynamics arise naturally in applications including inventory levels,
	cash balances, production systems, and regulated economic processes subject to sudden shocks.
	
	We model the system as a compound-Poisson jump-diffusion $X=(X_t)_{t\ge0}$ defined on a complete filtered probability space
	$(\Omega,\mathcal F,(\mathcal F_t)_{t\ge0},\Pr^0)$, and given as the unique solution (see, e.g., Theorem 1.19 in \cite{OksendalSulem2019})
	of the stochastic differential equation
	\begin{align}\label{eq:benchmark_model}
		\rmd X_t
		= b(X_{t-})\, \rmd t
		+ \sigma\, \rmd B_t
		+ \rmd \widetilde Z_t,
	\end{align}
	where $b:\R\to\R$ is a Lipschitz drift function and $\sigma>0$ is constant.
	Under $\Pr^0$, $B=(B_t)_{t\ge0}$ is a standard Brownian motion and
	$Z=(\sum_{i=1}^{N_t}Y_i)_{t\ge0}$ is a compound-Poisson process,
	where $N=(N_t)_{t\ge0}$ is a Poisson counting process with intensity $r>0$ and the jump sizes
	$(Y_i)_{i\ge1}$ are i.i.d.\ with distribution $F$ satisfying $\Esp[|Y|]<\infty$.
	We denote by $\widetilde Z=(\widetilde Z_t)_{t\ge0}$ the compensated jump process, where
	\[
	\widetilde Z_t = Z_t - r\Esp[Y]t .
	\]
	The processes $B$, $N$, and the sequence $(Y_i)_{i\ge1}$ are assumed independent, and
	$(\mathcal F_t)_{t\ge0}$ is the completed filtration generated by $B$ and $\widetilde Z$.
	
	\medskip
	
	The system can be regulated through upward and downward adjustments.
	Given two nondecreasing adapted processes $U=(U_t)_{t\ge0}$ and $D=(D_t)_{t\ge0}$,
	representing the cumulative upward and downward interventions, the controlled state
	$X^{U,D}=(X_t^{U,D})_{t\ge0}$ evolves as
	\begin{align}\label{eq:ambiguous_controlled_model}
		\rmd X_t^{U,D}
		= b(X_{t-}^{U,D})\, \rmd t
		+ \sigma\, \rmd B_t
		+ \rmd \widetilde Z_t
		+ \rmd U_t
		- \rmd D_t.
	\end{align}
	
	Admissible controls are pairs $(U,D)$ belonging to
	\[
	\underline{\mathcal A}
	=
	\left\{
	(U,D)\ \middle|\
	\begin{aligned}
		&U,D \text{ are nondecreasing, càdlàg, adapted processes}\\
		&\text{of bounded variation with } U_{0-}=D_{0-}=0
	\end{aligned}
	\right\}.
	\]
	
	\medskip
	
	To capture misspecification in the drift and in the jump intensity,
	we consider a family of alternative probability measures obtained through
	Girsanov transformations of both the Brownian and Poisson components.
	For predictable processes
	\[
	\kappa_t=\kappa(X_{t-}^{U,D}),
	\qquad
	\lambda_t=\lambda(X_{t-}^{U,D}),
	\]
	we define probability measures $\Pr^{\kappa,\lambda}$ locally absolutely continuous
	with respect to $\Pr^0$ through the Doléans-Dade exponential
	\begin{align}\label{eq:RN-density}
		\frac{\rmd\Pr^{\kappa,\lambda}}{\rmd\Pr^0}\Big|_{\mathcal F_t}
		=
		\mathcal E_t
		=
		\exp\!\left(
		\int_0^t \kappa_s\,\rmd B_s
		-\frac12\int_0^t \kappa_s^2\,\rmd s
		\right)
		\exp\!\left(
		\int_0^t \ln\!\frac{\lambda_s}{r}\,\rmd N_s
		-\int_0^t (\lambda_s-r)\,\rmd s
		\right).
	\end{align}
	
	The processes $\kappa$ and $\lambda$ represent distortions of the benchmark drift
	and Poisson intensity and are assumed to satisfy the bounds
	\begin{align}\label{eq:ambiguity_bounds}
		|\kappa(x)| \le \delta,
		\qquad
		(1-\varepsilon)r \le \lambda(x) \le r(1+\varepsilon),
	\end{align}
	for all $x\in\R$, where $\delta\ge0$ and $0\le\varepsilon<1$.
	
	Under these conditions, Novikov's criterion for Lévy processes
	(see Theorem 1.36 in \cite{OksendalSulem2019})
	ensures that $\mathcal E_t$ is a martingale with expectation one,
	so that $\Pr^{\kappa,\lambda}$ is well defined.
	We denote by $\Lambda$ the set of admissible distortion pairs $(\kappa,\lambda)$.
	To keep notation simple, we do not explicitly indicate the dependence of
	$\Pr^{\kappa,\lambda}$ on the control $(U,D)$.
	
	\medskip
	
	By Girsanov's theorem for jump-diffusions (see Theorem 1.33 in \cite{OksendalSulem2019}),
	under $\Pr^{\kappa,\lambda}$ the controlled state satisfies
	\begin{align}\label{eq:ambiguous-controlled_model}
		\rmd X_t^{U,D}
		&=
		\bigl(
		b(X_{t-}^{U,D})
		+\sigma\kappa(X_{t-}^{U,D})
		+(\lambda(X_{t-}^{U,D})-r)\Esp[Y]
		\bigr)\rmd t
		+\sigma\,\rmd B_t^\kappa
		+\rmd \widetilde Z_t^\lambda
		+\rmd U_t
		-\rmd D_t,
		%\nonumber
	\end{align}
	where $B^\kappa$ is a standard Brownian motion under $\Pr^{\kappa,\lambda}$ and
	\[
	\widetilde Z_t^\lambda
	=
	\sum_{i=1}^{N^\lambda_t}Y_i
	-
	\int_0^t \lambda(X_{s-}^{U,D})\Esp[Y]\,\rmd s,
	\]
	with $N^\lambda$ having intensity $\lambda(X_{t-}^{U,D})$.
	
	We write $\Pr_x^{\kappa,\lambda}$ for the conditional measure with
	$\Pr_x^{\kappa,\lambda}(X_0^{U,D}=x)=1$ and denote the corresponding
	expectation by $\Esp_x^{\kappa,\lambda}$.
	
	\medskip
	
	For a twice differentiable function $f$ with suitable growth conditions,
	the infinitesimal generator of the uncontrolled process under
	$\Pr^{\kappa,\lambda}$ is
	\begin{align}
		(\InfGen^{\kappa,\lambda}f)(x)
		&=
		(b(x)+\sigma\kappa(x)+(\lambda(x)-r)\Esp[Y])f'(x)
		+\frac12\sigma^2 f''(x)
		\nonumber\\
		&\quad
		+\lambda(x)
		\int_\R
		\bigl(
		f(x+y)-f(x)-y f'(x)
		\bigr)\rmd F(y)
		\nonumber\\
		&=
		(b(x)+\sigma\kappa(x)-r\Esp[Y])f'(x)
		+\frac12\sigma^2 f''(x)
		+\lambda(x)(\Psi f)(x),
		\label{eq:inf_gen}
	\end{align}
	where
	\begin{align}\label{eq:int_op}
		(\Psi f)(x)
		=
		\int_\R f(x+y)\,\rmd F(y)
		-
		f(x).
	\end{align}
	
	\begin{remark}
		The change of measure \eqref{eq:RN-density} corresponds to a relative-entropy
		penalization of the drift and intensity distortions.
		Indeed, the Kullback-Leibler divergence of $\Pr^{\kappa,\lambda}$
		with respect to $\Pr^0$ on $\mathcal F_t$ is
		\[
		D_{\mathrm{KL}}^t(\Pr^{\kappa,\lambda}\mid\Pr^0)
		=
		\frac12 \Esp[\int_0^t \kappa_s^2\,ds]^{\kappa,\lambda}
		+
		\Esp[
		\int_0^t
		\left(
		\lambda_s\ln\frac{\lambda_s}{r}
		-\lambda_s+r
		\right)ds
		]^{\kappa,\lambda}.
		\]
		Under the bounds \eqref{eq:ambiguity_bounds}, this divergence grows at most
		linearly in time, which justifies the interpretation of $\kappa$ and $\lambda$
		as entropy-penalized model distortions.
	\end{remark}
	
	%-------------------------------------------------%
	\section{Formulation of the control problem, HJB equation, and verification theorem}
	%-------------------------------------------------%
	
	%-------------------------------------------------%
	\subsection{Formulation of the control problem}
	\label{sec:control_problem}
	%-------------------------------------------------%
	
	For the measurable cost function $c:\R\rightarrow\R$ such that 
	\begin{align*}
		\Esp[\int_0^t |c(X_s^{U,D})|\,\rmd s]_{x}^{\kappa,\lambda} < \infty,
	\end{align*} 
	for all $t \geq 0$, $x\in\R$, and $(\kappa,\lambda)\in\Lambda$, and for per-unit control costs $c_U, c_D > 0$, define the long-run average cost
	\begin{align}\label{eq:long-run_average}
		J_x(U,D,\kappa,\lambda)
		\defeq
		\limsup_{t\rightarrow\infty}
		\frac{1}{t}
		\Esp[
		\int_0^t c(X_s^{U,D})\,\rmd s
		+ c_U U_t
		+ c_D D_t
		]_{x}^{\kappa,\lambda},
	\end{align}
	where $(\kappa,\lambda) \in \Lambda$ and $(U,D) \in \cA$, with
	\begin{align}\label{eq:admissible_controls}
		\cA
		\defeq
		\left\{
		(U,D) \in \uline{\cA}
		:
		\limsup_{t\to\infty}
		\frac{1}{t}
		\Esp[|X_t^{U,D}|]_x^{\kappa,\lambda}
		= 0,
		\ \text{for all } (\kappa,\lambda)\in\Lambda
		\right\}.
	\end{align}
	
	We assume that the decision maker is ambiguity-averse and evaluates each control under the worst-case model,
	leading to the robust ergodic singular control problem
	\begin{align}\label{eq:ergodic_singular_control_problem}
		\gamma
		\defeq
		\inf_{(U,D)\in\cA}
		\sup_{(\kappa,\lambda)\in\Lambda}
		J_x(U,D,\kappa,\lambda).
	\end{align}
	
	Solving \eqref{eq:ergodic_singular_control_problem} means obtaining a tractable characterization of $\gamma$ and,
	if they exist, identifying optimal admissible controls $U^*$ and $D^*$ together with worst-case ambiguity
	functions $\kappa^*$ and $\lambda^*$.
	
	\begin{remark}
		We later prove (see Theorem \ref{thm:verification_theorem}) that optimality in
		\eqref{eq:ergodic_singular_control_problem} is achieved by a constant-barrier
		Skorokhod-reflection policy. Under such a policy, the reflected process is positive recurrent
		and admits an invariant probability measure $\pi$. Hence, for any bounded running cost $c$
		(see, e.g., Theorem~17.1.7 in \cite{MeynTweedie2009}),
		\[
		\frac{1}{t}\int_0^t c(X_s^{U,D})\,ds
		\longrightarrow
		\int c(z)\,\rmd \pi(z)
		\qquad
		\text{a.s. under $\Pr_x$, for all $x\in[\underline x,\overline x]$.}
		\]
		In particular, the long-run average cost is independent of the initial state $x$,
		so the ergodic value $\gamma$ does not depend on $x$.
		See also \cite{MenaldiRobin1997} for related ergodic-control results for reflected jump diffusions.
	\end{remark}
	
	\begin{remark}
		The $(\kappa,\lambda)$-uniform condition
		\[
		\limsup_{t\to\infty}
		\frac{1}{t}
		\Esp[|X_t^{U,D}|]_x^{\kappa,\lambda}
		= 0
		\]
		in the definition of $\cA$ is a standard technical assumption used in the verification argument.
		It is satisfied by the reflecting-barrier controls that will later be shown to be optimal.
	\end{remark}
	
	For later reference, we derive a bound for the ergodic value $\gamma$.
	We first introduce an auxiliary result controlling the long-run intervention rate for
	Skorokhod-reflection policies.
	
	A pair $(U,D)$ is said to be of reflecting type if there exist barriers $x_1<x_2$ such that
	$(U,D)=(U^{x_1},D^{x_2})$ and the corresponding reflected process
	$X_t^{x_1,x_2} \defeq X_t^{U^{x_1},D^{x_2}}$ remains in $[x_1,x_2]$
	$\Pr^{\kappa,\lambda}$-a.s. for all $(\kappa,\lambda)\in\Lambda$,
	while the Skorokhod complementarity condition
	\begin{align}\label{eq:no_simultaneous_activation}
		\int_0^\infty  (X_t^{U^{x_1}, D^{x_2}} - x_1)\rmd U_t^{x_1}
		=
		\int_0^\infty  (x_2 - X_t^{U^{x_1}, D^{x_2}})\rmd D_t^{x_2}
		=
		0
	\end{align}
	holds.
	See \cite{slominski_eulers_2001,slominski_stochastic_2010} for existence and uniqueness
	results for Skorokhod reflection in jump-diffusion settings.
	
	\begin{lemma}\label{lm:control_ergodic_bounds}
		Fix $x_1<x_2$ and assume $\Esp[Y^2]<\infty$.
		For all $x\in\R$ and all $(\kappa,\lambda)\in\Lambda$,
		\begin{align}\label{eq:ergodic_control_bound}
			\limsup_{t\to\infty}
			\frac{1}{t}
			\Esp[
			U_t^{x_1}
			+
			D_t^{x_2}
			]_x^{\kappa,\lambda}
			\le
			K(x_1,x_2),
		\end{align}
		where
		\[
		K(x_1,x_2)
		=
		\left(
		\max_{x\in[x_1,x_2]}|b(x)|
		+
		\sigma\delta
		+
		\varepsilon r\Esp[|Y|]
		\right)
		+
		\frac{\sigma^2+r(1+\varepsilon)\Esp[Y^2]}{x_2-x_1}.
		\]
	\end{lemma}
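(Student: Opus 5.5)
The plan is to apply Itô's formula for semimartingales with jumps to the quadratic test function $f(z)=(z-x_1)^2$, or more symmetrically to $g(z)=(z-m)^2$ with $m=(x_1+x_2)/2$. Combined with the Skorokhod complementarity condition \eqref{eq:no_simultaneous_activation}, this turns the control terms into a negative multiple of $U_t+D_t$. The term $(x_2-x_1)(U_t+D_t)$ is then bounded by integrals of bounded quantities plus a boundary term that is $O(1)$ because $X$ stays in $[x_1,x_2]$.

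\textbf{Step 1 (initial jump).} If $x\notin[x_1,x_2]$, there is an initial jump at time $0$ of size at most $|x-x_1|+|x-x_2|$. This contributes only $O(1)$ to $U_t+D_t$, so it vanishes after dividing by $t$. We may therefore assume $x\in[x_1,x_2]$.

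\textbf{Step 2 (Itô expansion).} Work under $\Pr^{\kappa,\lambda}$ with the dynamics \eqref{eq:ambiguous-controlled_model}. Apply Itô's formula to $g(X_t)$ with $g(z)=(z-m)^2$.
\begin{itemize}
\item The drift term is $2(X_{s-}-m)\beta_s$, where $\beta_s=b+\sigma\kappa+(\lambda-r)\Esp[Y]$. Since $|X_{s-}-m|\le (x_2-x_1)/2$, it is bounded by $(x_2-x_1)\bigl(\max_{[x_1,x_2]}|b|+\sigma\delta+\varepsilon r\Esp[|Y|]\bigr)$.
\item The Brownian quadratic variation contributes $\sigma^2\,\rmd s$.
\item The compensated-jump part is $\lambda\int[(X_{s-}+y-m)^2-(X_{s-}-m)^2-2(X_{s-}-m)y]\,\rmd F(y)=\lambda\Esp[Y^2]\le r(1+\varepsilon)\Esp[Y^2]$, plus a martingale.
\item The control part is $2(X_{s-}-m)(\rmd U_s-\rmd D_s)$ plus the squared control jumps. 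Because $U$ only increases when $X$ is at $x_1$, where $X-m=-(x_2-x_1)/2$, and similarly $D$ only at $x_2$, this part equals $-(x_2-x_1)(\rmd U_s+\rmd D_s)$, up to the handling of the jump pieces.
\end{itemize}

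\textbf{Step 3 (main obstacle: jumps of $X$ and of the controls).} A downward jump of $Z$ may overshoot below $x_1$ and be instantly pushed back by a jump of $U$. The jump-by-jump accounting must be done carefully. I would treat each jump as follows: $X$ goes from $X_{s-}$ to $X_{s-}+\Delta Z_s$, and then $U$ restores it to $x_1$. The exact change $g(X_s)-g(X_{s-})$ should then be compared with $2(X_{s-}-m)\Delta Z_s+(\Delta Z_s)^2-(x_2-x_1)\Delta U_s$, showing that the true change is no larger. This is an elementary inequality: pushing back from $x_1-a$ to $x_1$ with $a=\Delta U_s$ changes $g$ by $-(x_2-x_1)a-a^2\le-(x_2-x_1)a$. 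The analogous inequality holds at $x_2$. So the identity from Step 2 holds as an inequality in the right direction.

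\textbf{Step 4 (conclusion).} Take expectations. The stochastic integrals are true martingales since integrands are bounded and $\Esp[Y^2]<\infty$. This gives
\[
(x_2-x_1)\,\Esp[U_t+D_t]\le g(x)-\Esp[g(X_t)]+t\Bigl[(x_2-x_1)\bigl(\max_{[x_1,x_2]}|b|+\sigma\delta+\varepsilon r\Esp[|Y|]\bigr)+\sigma^2+r(1+\varepsilon)\Esp[Y^2]\Bigr].
\]
Dividing by $t(x_2-x_1)$ and letting $t\to\infty$, where $g(x)$ is bounded, gives $K(x_1,x_2)$.
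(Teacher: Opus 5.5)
Your proposal is correct and follows essentially the same route as the paper's proof. The shared ingredients are the test function $g(z)=(z-m)^2$ with $m=(x_1+x_2)/2$, the generator bound $|(\InfGen^{\kappa,\lambda}g)(z)|\le (x_2-x_1)K(x_1,x_2)$ for $z\in[x_1,x_2]$, the Skorokhod complementarity condition turning the continuous control parts into $-(x_2-x_1)(U^c_t+D^c_t)$, the overshoot inequality $-(x_2-x_1)a-a^2\le -(x_2-x_1)a$ for reflection jumps (your comparison with the unreflected state $X_{s-}+\Delta Z_s$ is exactly the paper's comparison of $f(\oline{X}_t)$ with $f(\widehat{X}_t)$), and an initial projection onto $[x_1,x_2]$ when $x\notin[x_1,x_2]$.
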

	
	The proof of Lemma \ref{lm:control_ergodic_bounds} is given in Appendix~\ref{app:someproofs}.
	
	The previous bound implies that the ergodic value is finite.
	
	\begin{proposition}\label{pr:ergodic-value_upper_bound}
		For any $x_1<x_2$, 
		$$\gamma \le \Gamma(x_1,x_2),$$
		where the upperbound is given by 
		\begin{align}\label{eq:ergodic-value_upper_bound}
			\Gamma(x_1,x_2) 
			\defeq \max_{x\in[x_1,x_2]} c(x) + (c_U+c_D)K(x_1,x_2),
		\end{align}
	\end{proposition}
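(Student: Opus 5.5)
The bound follows by evaluating the max-min problem \eqref{eq:ergodic_singular_control_problem} at a single admissible control. Fix $x_1<x_2$ and take the reflecting-type pair $(U^{x_1},D^{x_2})$. Since $\gamma$ is an infimum over $\cA$, it suffices to show two things: that $(U^{x_1},D^{x_2})\in\cA$, and that
\[
\sup_{(\kappa,\lambda)\in\Lambda} J_x(U^{x_1},D^{x_2},\kappa,\lambda)\le \Gamma(x_1,x_2).
\]
As in Lemma \ref{lm:control_ergodic_bounds}, I will work under $\Esp[Y^2]<\infty$ (otherwise $K$ is infinite and the bound is trivial).

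\emph{Admissibility.} Let $\bar x\defeq\max(|x_1|,|x_2|)$. For $t>0$ the reflected process stays in $[x_1,x_2]$ $\Pr^{\kappa,\lambda}$-a.s. for every $(\kappa,\lambda)$, so $|X_t^{x_1,x_2}|\le \bar x$. At time $0$ it may jump from $x$ into the band, but this happens only once and does not matter. Hence $\frac1t\Esp[|X_t^{x_1,x_2}|]_x^{\kappa,\lambda}\le \bar x/t\to0$ uniformly in $(\kappa,\lambda)\in\Lambda$, and the pair belongs to $\cA$.

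\emph{Cost bound.} Fix $(\kappa,\lambda)\in\Lambda$. Since $X_s^{x_1,x_2}\in[x_1,x_2]$ for $s>0$,
\[
\int_0^t c(X_s^{x_1,x_2})\,\rmd s\le t\max_{[x_1,x_2]}c .
\]
This uses that $c$ attains its maximum on the compact interval, which is implicit in the definition of $\Gamma$; if $c$ is merely measurable, read it as a supremum. For the control costs, $c_UU_t+c_DD_t\le (c_U+c_D)(U_t^{x_1}+D_t^{x_2})$. Taking expectations, dividing by $t$, and using subadditivity of $\limsup$ together with Lemma \ref{lm:control_ergodic_bounds} gives
\[
J_x(U^{x_1},D^{x_2},\kappa,\lambda)\le \max_{[x_1,x_2]}c+(c_U+c_D)K(x_1,x_2)=\Gamma(x_1,x_2).
\]
The right-hand side does not depend on $(\kappa,\lambda)$ or $x$, so taking the supremum over $\Lambda$ and then bounding the infimum over $\cA$ by this particular control yields $\gamma\le\Gamma(x_1,x_2)$.

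\emph{Main point.} The substantive input is the uniform-in-$(\kappa,\lambda)$ intervention-rate bound, which is exactly Lemma \ref{lm:control_ergodic_bounds}. The only care needed is twofold. First, the initial jump at time $0$ when $x\notin[x_1,x_2]$ contributes a finite amount $(c_U+c_D)\,\mathrm{dist}(x,[x_1,x_2])$, which vanishes after dividing by $t$. Second, the existence of the Skorokhod reflection under each $\Pr^{\kappa,\lambda}$ is taken from the cited references.
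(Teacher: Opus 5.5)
Your proposal is correct and follows the paper's argument. You plug the reflecting pair $(U^{x_1},D^{x_2})$ into the infimum, bound the running cost by $\max_{[x_1,x_2]}c$ because the reflected state stays in the band, and bound the intervention costs through Lemma~\ref{lm:control_ergodic_bounds}. Two of your steps are small refinements the paper leaves implicit: you check explicitly that this pair lies in $\cA$, and you apply the lemma at the level of the $\limsup$ rather than asserting a bound at each finite $t$, as the paper's write-up does.
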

	
	The proof of Proposition \ref{pr:ergodic-value_upper_bound} is given in
	Appendix~\ref{app:someproofs}.
	
	%%%%%%%%%%%
	
	%-------------------------------------------------
	\subsection{HJB equation and verification}\label{sec:hjb_verification}
	%-------------------------------------------------
	
	An application of the dynamic programming principle suggests the existence of a function
	$V:\R\to\R$ and a constant~$\gamma$ satisfying the Hamilton--Jacobi--Bellman (HJB) equation
	\begin{align}\label{eq:HJB}
		\min
		\left\{
		c_D - V'(x),
		\;
		c_U + V'(x),
		\;
		\sup_{(\kappa,\lambda)\in\Lambda}
		\left\{
		c(x) + (\InfGen^{\kappa,\lambda}V)(x)
		\right\}
		- \gamma
		\right\}
		=0.
	\end{align}
	
	Using the expression of the generator \eqref{eq:inf_gen} and writing
	$H(x)\defeq V'(x)$, the inner maximization in \eqref{eq:HJB} yields the
	bang-bang distortion policies
	\begin{align}\label{eq:bang-bang_opt}
		\kappa^*(x)
		=
		\begin{cases}
			+\delta, & H(x)\ge 0,\\
			-\delta, & H(x)<0,
		\end{cases}
		\qquad
		\lambda^*(x)
		=
		\begin{cases}
			r(1+\varepsilon), & (\Psi V)(x)\ge 0,\\
			r(1-\varepsilon), & (\Psi V)(x)<0.
		\end{cases}
	\end{align}
	
	Denoting by $\InfGen^* \defeq \InfGen^{\kappa^*,\lambda^*}$, we obtain
	\begin{align}\label{eq:optimal_ambiguity_inner_max}
		\sup_{(\kappa,\lambda)\in\Lambda}
		(\InfGen^{\kappa,\lambda}V)(x)
		=
		(\InfGen^*V)(x).
	\end{align}
	
	On the control side, it is natural to expect that optimal policies are of
	reflecting-barrier type in the sense of Skorokhod reflection.
	Accordingly, we look for candidate optimal controls of the form
	\begin{align}\label{eq:optimal_controls}
		(U^*,D^*)
		=
		(U^{\underline x},D^{\overline x}),
	\end{align}
	for unknown reflecting barriers $\underline x$ and $\overline x$, with $\underline x < \overline x$.
	
	Combining the HJB equation with the barrier-structure ansatz leads to the
	following free-boundary problem for~$(V,\gamma,\underline x,\overline x)$:
	\begin{subequations}
		\begin{empheq}[left=\empheqlbrace]{alignat=3}
			\InfGen^*V(x)+c(x) &= \gamma,
			& x&\in[\underline x,\overline x],
			\label{eq:FBP_continuation}
			\\
			\InfGen^*V(x)+c(x) &\ge \gamma,
			& x&\notin[\underline x,\overline x],
			\label{eq:FBP_stopping}
			\\
			V'(x) &\in [-c_U ,c_D],
			& x&\in(\underline x,\overline x),
			\label{eq:FBP_V'_constraints}
			\\
			V'(x) &= -c_U,
			& x&\le \underline x,
			\label{eq:FBP_lower_stopping}
			\\
			V'(x) &= c_D,
			& x&\ge \overline x,
			\label{eq:FBP_upper_stopping}
			\\
			V &\in C^2(\R). \hspace{2cm}
			\label{eq:FBP_C2}
		\end{empheq}
	\end{subequations}
	
	Since $V$ is linear outside the inaction region,
	\eqref{eq:FBP_lower_stopping}-\eqref{eq:FBP_upper_stopping} imply that
	$V''\equiv 0$ outside $[\underline x,\overline x]$.
	
	The next result shows that any sufficiently regular solution of the
	free-boundary problem characterizes the optimal ergodic value
	and the optimal controls.
	
	\begin{theorem}[Verification theorem]\label{thm:verification_theorem}
		Suppose that $V:\R\to\R$ and $\gamma\in\R$
		solve the free-boundary problem
		\eqref{eq:FBP_continuation}-\eqref{eq:FBP_C2}.
		Then
		\begin{align}\label{eq:gamma_equals_control_value}
			\gamma
			=
			\inf_{(U,D)\in\cA}
			\sup_{(\kappa,\lambda)\in\Lambda}
			J_x(U,D,\kappa,\lambda)
			=
			\sup_{(\kappa,\lambda)\in\Lambda}
			\inf_{(U,D)\in\cA}
			J_x(U,D,\kappa,\lambda)
			=
			J_x(U^*,D^*,\kappa^*,\lambda^*),
		\end{align}
		where $(U^*,D^*)$ are the reflecting-barrier controls
		\eqref{eq:optimal_controls} and
		$(\kappa^*,\lambda^*)$ are given by
		\eqref{eq:bang-bang_opt}.
	\end{theorem}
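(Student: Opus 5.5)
The plan is to prove the two inequalities
\[
\sup_{(\kappa,\lambda)\in\Lambda} J_x(U^*,D^*,\kappa,\lambda)\le\gamma
\qquad\text{and}\qquad
\inf_{(U,D)\in\cA} J_x(U,D,\kappa^*,\lambda^*)\ge\gamma .
\]
Combined with the trivial chain
\[
\sup_{\kappa,\lambda}\inf_{U,D}J\le\inf_{U,D}\sup_{\kappa,\lambda}J\le\sup_{\kappa,\lambda}J(U^*,D^*,\cdot),
\qquad
\sup_{\kappa,\lambda}\inf_{U,D}J\ge\inf_{U,D}J(\cdot,\kappa^*,\lambda^*),
\]
these two bounds sandwich every quantity in \eqref{eq:gamma_equals_control_value} between $\gamma$ and $\gamma$. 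The equality with $J_x(U^*,D^*,\kappa^*,\lambda^*)$ then follows as well, since this value lies between the two bounds.

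Both inequalities rest on the same tool: Itô's formula for semimartingales with jumps, applied to $V(X_t^{U,D})$ under a generic $\Pr^{\kappa,\lambda}$. Since $V\in C^2$, we write
\[
V(X_t)=V(x)+\int_0^t(\InfGen^{\kappa,\lambda}V)(X_{s-})\,\rmd s+M_t+\int_0^t V'(X_{s-})\,\rmd U^c_s-\int_0^t V'(X_{s-})\,\rmd D^c_s+\sum_{s\le t}\bigl[V(X_s)-V(X_{s-})-\Delta Z_s\text{-part}\bigr]_{\text{control jumps}} .
\]
Here $M$ is a local martingale built from $\sigma V'(X)\,\rmd B^\kappa$ and the compensated Poisson random measure. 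Because $V'$ is bounded, taking values in $[-c_U,c_D]$ by \eqref{eq:FBP_V'_constraints}--\eqref{eq:FBP_upper_stopping}, and $V$ therefore grows at most linearly, $M$ is a true martingale: the Brownian integrand is bounded, and the jump part is integrable since $|V(x+y)-V(x)|\le \max(c_U,c_D)|y|$ and $\Esp[|Y|]<\infty$. For control jumps of size $\Delta U_s$, the mean value theorem combined with $V'\ge -c_U$ gives $V(X_s)-V(X_{s-})\ge -c_U\Delta U_s$. Similarly, for the downward control, $V'\le c_D$ gives $V(X_s)-V(X_{s-})\ge -c_D\Delta D_s$.

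\emph{Lower bound.} Fix $\kappa^*,\lambda^*$ and an arbitrary $(U,D)\in\cA$. Equations \eqref{eq:FBP_continuation}--\eqref{eq:FBP_stopping} give $\InfGen^*V+c\ge\gamma$ on all of $\R$. Together with the control-jump estimates above, taking expectations yields
\[
\Esp_x^{*}\!\left[\int_0^t c(X_s)\,\rmd s+c_UU_t+c_DD_t\right]\ge \gamma t+V(x)-\Esp_x^*[V(X_t)].
\]
Linear growth of $V$ and the admissibility condition in \eqref{eq:admissible_controls} give $\Esp_x^*[V(X_t)]/t\to0$. Dividing by $t$ and taking $\limsup$ gives $J_x(U,D,\kappa^*,\lambda^*)\ge\gamma$.

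\emph{Upper bound.} Take $(U^*,D^*)=(U^{\underline x},D^{\overline x})$ and an arbitrary $(\kappa,\lambda)\in\Lambda$. The reflected process stays in $[\underline x,\overline x]$, so it is bounded and $(U^*,D^*)\in\cA$. On this interval, \eqref{eq:optimal_ambiguity_inner_max} and \eqref{eq:FBP_continuation} give $\InfGen^{\kappa,\lambda}V+c\le\InfGen^*V+c=\gamma$. By the Skorokhod condition \eqref{eq:no_simultaneous_activation}, $U^*$ increases only when $X=\underline x$, where $V'=-c_U$, and $D^*$ increases only when $X=\overline x$, where $V'=c_D$. Jump overshoots below $\underline x$ are pushed back along the region where $V'\equiv -c_U$ (and symmetrically above $\overline x$). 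Hence the control-jump terms hold with equality, and we get
\[
\Esp_x^{\kappa,\lambda}\!\left[\int_0^t c\,\rmd s+c_UU^*_t+c_DD^*_t\right]\le\gamma t+V(x)-\Esp[V(X_t)].
\]
The last term is bounded, which gives $J_x(U^*,D^*,\kappa,\lambda)\le\gamma$. Since $\kappa^*,\lambda^*$ attain the supremum in \eqref{eq:optimal_ambiguity_inner_max}, the same computation gives equality for them.

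The main obstacle I expect is the careful treatment of the jump terms in Itô's formula. Exogenous jumps from $Z$ may carry the state outside $[\underline x,\overline x]$, and these are immediately corrected by a jump in $U^*$ or $D^*$. One has to check that the controlled state's integral term is evaluated where $V$ is $C^2$ with \eqref{eq:FBP_stopping}, and that the overshoot correction is exactly priced at $c_U$ or $c_D$ thanks to linearity of $V$ outside the band. A second point requiring care is the initial condition $x\notin[\underline x,\overline x]$, which produces an initial jump $\Delta U_0$ or $\Delta D_0$ that is handled the same way. Finally, one must justify the martingale property of the Poisson part under $\Pr^{\kappa,\lambda}$, which uses the bounded intensity from \eqref{eq:ambiguity_bounds}.
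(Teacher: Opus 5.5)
Your proposal is correct and follows essentially the same route as the paper. The paper applies It\^o/Dynkin to $V$ under $\Pr^{\kappa^*,\lambda^*}$ with an arbitrary $(U,D)\in\cA$ to get $\gamma\le\inf_{(U,D)\in\cA}J_x(U,D,\kappa^*,\lambda^*)$, applies it again under an arbitrary $\Pr^{\kappa,\lambda}$ with the reflecting controls (where the control terms are priced exactly at $c_U$ and $c_D$) to get $\sup_{(\kappa,\lambda)\in\Lambda}J_x(U^*,D^*,\kappa,\lambda)\le\gamma$, and closes with the elementary minimax inequality, exactly as you do. Your additional checks fill in details the paper leaves implicit: the true-martingale property via bounded $V'$ and $\Esp[|Y|]<\infty$, the admissibility of $(U^*,D^*)$, and the explicit sandwich giving $J_x(U^*,D^*,\kappa^*,\lambda^*)=\gamma$.
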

	
	The proof follows standard verification arguments based on Dynkin's formula
	and the gradient constraints, and is reported in
	Appendix~\ref{app:someproofs}.
	
	%-------------------------------------------------%
	\section{Characterization of the optimal policy: The case of negative jumps}
	\label{sec:optimalpolicy}
	%-------------------------------------------------%
	
	\subsection{Structure of the ambiguity regions}
	\label{sec:bang-bang_negative-jumps}
	
	This section characterizes the structure of the optimal ambiguity distortions
	introduced in \eqref{eq:bang-bang_opt}. 
	Under natural convexity conditions on the value function, the worst-case drift
	and intensity distortions exhibit a threshold (bang-bang) structure, which plays a key
	role in the construction of the optimal policy and in the numerical solution of the model.
	
	Throughout the section we assume that the candidate value function $V$
	is strictly convex in the inaction region~$(\uline{x},\oline{x})$,
	so that $V'$ is increasing.
	This assumption is not restrictive for the class of problems considered here.
	In the guess-and-verify procedure used later (see Theorem~\ref{thm:verifying_candidate}),
	the candidate solution inherits strict convexity from the running cost $c$,
	and the resulting structure can then be verified through
	Theorem~\ref{thm:verification_theorem}.
	
	Under this regularity, the ambiguity regions are separated by a finite number
	of thresholds that determine the worst-case drift and intensity distortions.
	
	\medskip
	
	A first consequence of strict convexity is that the ambiguous drift switches sign
	at a unique point inside the inaction region.
	
	\begin{proposition}[Structure of the drift ambiguity region]
		\label{pr:ambg-drift_bang-bang}
		If $V$ and $\gamma$ satisfy the free-boundary problem
		\eqref{eq:FBP_stopping}-\eqref{eq:FBP_C2} and $V$ is strictly convex in
		$(\uline{x},\oline{x})$, then there exists a unique point
		$x^\kappa \in (\uline{x},\oline{x})$ such that
		\[
		V'(x^\kappa)=0, \qquad
		V'(x)<0 \text{ for } x<x^\kappa, \qquad
		V'(x)>0 \text{ for } x>x^\kappa.
		\]
	\end{proposition}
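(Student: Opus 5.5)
The plan is to combine the boundary values of $V'$ coming from the free-boundary problem with strict monotonicity of $V'$ on the inaction region, and then apply the intermediate value theorem. By \eqref{eq:FBP_C2}, $V\in C^2(\R)$, so $V'$ is continuous on $\R$. From \eqref{eq:FBP_lower_stopping} and \eqref{eq:FBP_upper_stopping}, letting $x\uparrow\uline{x}$ and $x\downarrow\oline{x}$ and using continuity, we get
\[
V'(\uline{x})=-c_U<0,\qquad V'(\oline{x})=c_D>0,
\]
since $c_U,c_D>0$ by assumption. Strict convexity of $V$ on $(\uline{x},\oline{x})$ means $V'$ is strictly increasing there. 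Continuity of $V'$ on the closed interval then extends this to $[\uline{x},\oline{x}]$: for instance, $V'(\uline{x})=\lim_{y\downarrow\uline{x}}V'(y)\le V'(y_0)$ for each $y_0$ inside, and strictness follows by inserting an intermediate point.

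Existence comes from the intermediate value theorem. Since $V'$ is continuous on $[\uline{x},\oline{x}]$ with $V'(\uline{x})<0<V'(\oline{x})$, there is some $x^\kappa\in(\uline{x},\oline{x})$ with $V'(x^\kappa)=0$. This point is interior because the endpoint values are nonzero.

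Uniqueness and the sign pattern follow from strict monotonicity on $[\uline{x},\oline{x}]$. Inside the interval, $V'(x)<V'(x^\kappa)=0$ for $x<x^\kappa$ and $V'(x)>0$ for $x>x^\kappa$, so no other zero exists there. Outside the interval, $V'\equiv -c_U<0$ for $x\le\uline{x}$ and $V'\equiv c_D>0$ for $x\ge\oline{x}$. Hence the sign statements hold on all of $\R$, and $x^\kappa$ is the unique zero of $V'$ on $\R$.

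I do not expect a real obstacle; the argument is essentially one-dimensional calculus. The only care needed is to obtain the boundary values of $V'$ at $\uline{x},\oline{x}$ from the $C^2$ matching (continuity of $V'$), rather than from \eqref{eq:FBP_V'_constraints}, which alone would only give $V'\in[-c_U,c_D]$ and not the strict sign change. The proposition also implies that $\kappa^*$ in \eqref{eq:bang-bang_opt} equals $-\delta$ on $(-\infty,x^\kappa)$ and $+\delta$ on $[x^\kappa,\infty)$.
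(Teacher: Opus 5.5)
Your proof is correct and follows the paper's argument: $V'(\uline{x})=-c_U<0$ and $V'(\oline{x})=c_D>0$ (from the stopping conditions and $C^2$-continuity), combined with strict monotonicity of $V'$ on the inaction region, give a unique zero via the intermediate value theorem. You additionally use $V'\equiv -c_U$ and $V'\equiv c_D$ outside $[\uline{x},\oline{x}]$ to extend the sign pattern to all of $\R$; the paper leaves this implicit.
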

	
	The proof is given in Appendix~\ref{app:someproofs}.
	
	\medskip
	
	We next study the structure of the ambiguity region associated with the jump
	intensity. To obtain a monotone structure, we restrict attention to the case of
	negative jumps.
	
	\begin{assumption}[Negative jumps]
		\label{asm:neg_jumps}
		$\mathrm{supp}(F)\subset(-\infty,0]$ and $\Pr[Y<0]>0$.
	\end{assumption}
	
	Under this assumption, the sign of $(\Psi V)(x)$ changes at most once,
	leading to a single threshold for the intensity distortion.
	
	\begin{proposition}[Structure of the intensity ambiguity region]
		\label{pr:ambg-intensity_bang-bang}
		Suppose that $V$ satisfies the free-boundary problem
		\eqref{eq:FBP_continuation}-\eqref{eq:FBP_C2} and is strictly convex in
		$(\uline{x},\oline{x})$, and let Assumption~\ref{asm:neg_jumps} hold.
		Then there exists a unique $x^\lambda$, such that~$x^\lambda>x^\kappa>\uline{x}$, and 
		\[
		(\Psi V)(x^\lambda)=0, \qquad
		(\Psi V)(x)>0 \text{ for } x<x^\lambda, \qquad
		(\Psi V)(x)<0 \text{ for } x>x^\lambda.
		\]
		
		In addition, %$x^\lambda<\oline{x}$ if and only if
		\begin{align}
			x^\lambda<\oline{x} \iff
			(\Psi V)(\oline{x})
			=
			\Esp[V(\oline{x}+Y)-V(\oline{x})]
			<0.
			\label{eq:xla_in_inaction_suf_cond}
		\end{align}
	\end{proposition}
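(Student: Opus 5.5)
Let $g(x)\defeq(\Psi V)(x)=\Esp[V(x+Y)-V(x)]$. The plan is to show that $g$ is continuous and strictly decreasing on $(-\infty,\oline{x}]$ and nonincreasing on $[\oline{x},\infty)$, with $g>0$ at $x^\kappa$ and $g<0$ far to the right. The intermediate value theorem then gives existence of the zero $x^\lambda$, and monotonicity gives uniqueness.

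\emph{Step 1: monotonicity.} Since $V\in C^1$ with $V'$ bounded (it lies in $[-c_U,c_D]$ everywhere) and $\Esp|Y|<\infty$, dominated convergence gives
\[
g'(x)=\Esp[V'(x+Y)-V'(x)].
\]
The function $V'$ is nondecreasing on $\R$: it is constant outside $[\uline{x},\oline{x}]$, strictly increasing inside, and continuous. Under Assumption~\ref{asm:neg_jumps} we have $x+Y\le x$, so $V'(x+Y)-V'(x)\le 0$ and hence $g'\le 0$. For strictness when $x\in(\uline{x},\oline{x}]$, note that on the event $\{Y<0\}$, which has positive probability, $x+Y<x$. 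Then $V'(x+Y)<V'(x)$ because $V'$ is strictly increasing on $(\uline{x},x]$ and equals $-c_U<V'(x)$ on $(-\infty,\uline{x}]$. This gives $g'(x)<0$ on $(\uline{x},\oline{x}]$. For $x\le \uline{x}$, $V$ is linear with slope $-c_U$ on $(-\infty,x]$, so $g(x)=-c_U\Esp[Y]>0$ is constant and positive.

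\emph{Step 2: sign at $x^\kappa$ and on $(-\infty,x^\kappa]$.} By Proposition~\ref{pr:ambg-drift_bang-bang}, $V'<0$ on $(-\infty,x^\kappa)$, since it is negative inside the inaction region and equals $-c_U$ below it. So for $x\le x^\kappa$, $V$ is strictly decreasing on $(-\infty,x]$, and $V(x+Y)-V(x)\ge 0$ with strict inequality on $\{Y<0\}$. Hence $g(x)>0$ for all $x\le x^\kappa$, which forces any zero to satisfy $x^\lambda>x^\kappa>\uline{x}$.

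\emph{Step 3: existence, uniqueness, and the equivalence.} This is the main obstacle, namely controlling $g$ to the right of $\oline{x}$. For $x\ge\oline{x}$,
\[
g(x)=\Esp\bigl[(V(x+Y)-V(x))\Ind_{\{x+Y\ge\oline{x}\}}\bigr]+\Esp\bigl[(V(x+Y)-V(x))\Ind_{\{x+Y<\oline{x}\}}\bigr].
\]
The first term equals $c_D\Esp[Y\Ind_{\{x+Y\ge\oline{x}\}}]$, which tends to $c_D\Esp[Y]<0$ as $x\to\infty$. The second term is bounded in absolute value by $\max(c_U,c_D)\,\Esp[|Y|\Ind_{\{Y<\oline{x}-x\}}]$, which tends to $0$ by dominated convergence. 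Hence $\lim_{x\to\infty}g(x)=c_D\Esp[Y]<0$.

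Combining the steps, $g$ is continuous, positive on $(-\infty,x^\kappa]$, strictly decreasing on $[x^\kappa,\oline{x}]$, nonincreasing afterwards, and eventually negative. So it has a zero $x^\lambda>x^\kappa$. The zero is unique, with $g>0$ before and $g<0$ after, under either of the following conditions:
\begin{itemize}
\item $x^\lambda\le\oline{x}$, since $g$ is strictly decreasing there and nonincreasing afterwards;
\item $x^\lambda>\oline{x}$ with $g$ strictly decreasing past $\oline{x}$. This holds whenever $\Pr[x+Y<\oline{x}]>0$, because then $V'(x+Y)<c_D$ on that event.
\end{itemize}
If that probability could vanish (bounded jumps), I would refine the definition of $x^\lambda$ as the infimum of the zero set and argue that on a flat stretch $g$ equals the negative constant $c_D\Esp[Y]$, so $g$ cannot vanish there.

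Finally, by strict monotonicity and the sign structure, $x^\lambda<\oline{x}$ if and only if $g(\oline{x})<0$, which is \eqref{eq:xla_in_inaction_suf_cond}.
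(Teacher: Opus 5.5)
Your proof is correct and follows essentially the same route as the paper. Both arguments differentiate $\Psi V$ under the integral to get monotonicity, use $V'<0$ on $(-\infty,x^\kappa)$ for positivity to the left of $x^\kappa$, obtain $\lim_{x\to\infty}(\Psi V)(x)=c_D\Esp[Y]<0$ by dominated convergence, and conclude by continuity and monotonicity. You are more careful than the paper beyond $\oline{x}$: for bounded jumps, $\Psi V$ is eventually constant and equal to $c_D\Esp[Y]$, so the paper's claim that $(\Psi V)'<0$ on all of $(\uline{x},\infty)$ is too strong, and your flat-stretch argument correctly secures uniqueness in that case.
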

	
	The proof is given in Appendix~\ref{app:someproofs}.
	
	\medskip
	
	Propositions
	\ref{pr:ambg-drift_bang-bang}-\ref{pr:ambg-intensity_bang-bang}
	show that the optimal distortions have the threshold form
	\begin{align}
		\kappa^*(x)
		&=
		\begin{cases}
			+\delta, & x \ge x^\kappa,\\
			-\delta, & x < x^\kappa,
		\end{cases}
		\qquad
		\lambda^*(x)
		=
		\begin{cases}
			r(1+\varepsilon), & x \le x^\lambda,\\
			r(1-\varepsilon), & x > x^\lambda,
		\end{cases}
		\label{eq:bang-bang_neg-jumps}
	\end{align}
	for some $\uline{x}<x^\kappa<x^\lambda$.
	
	Depending on the position of $x^\lambda$ relative to the upper barrier,
	two mutually exclusive regimes may arise:
	\begin{align}
		\text{Regime 1: }
		\uline{x} < x^\kappa < x^\lambda < \oline{x},
		\qquad
		\text{Regime 2: }
		\uline{x} < x^\kappa < \oline{x} \le x^\lambda.
		\label{eq:regimes}
	\end{align}
	
	Although the regime cannot always be determined a priori,
	this does not create difficulties either theoretically or numerically.
	From a theoretical point of view, both regimes can be treated within the same
	verification argument (see Theorem~\ref{thm:verifying_candidate}), and 
	from a computational perspective, both cases can be handled within the same
	numerical procedure, as described in Section~\ref{sec:numerics}.
	
	Nevertheless, identifying the regime in advance may simplify the analysis
	and reduce the computational cost.
	The next result provides a sufficient condition for Regime~1 to hold,
	expressed in terms of the parameters of the model.
	
	\begin{proposition}
		\label{pr:suf_condition_regime1}
		Condition \eqref{eq:xla_in_inaction_suf_cond} holds if there exist
		$x_1<x_2$ such that
		\[
		\Gamma(x_1,x_2)
		<
		\uline{c}
		+
		c_D(\uline{b}+\sigma\delta-r\Esp[Y]),
		\]
		for constants $\uline{c}$ and $\uline{b}$ satisfying
		$c(\oline{x})\ge \uline{c}$ and $b(\oline{x})\ge \uline{b}$.
	\end{proposition}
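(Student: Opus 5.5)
Argue by contradiction: suppose $(\Psi V)(\oline{x})\ge 0$, so by Proposition~\ref{pr:ambg-intensity_bang-bang} we are in Regime~2, and evaluate the HJB equation at the upper barrier. By \eqref{eq:FBP_continuation} and \eqref{eq:FBP_C2}, the equation $\InfGen^*V(\oline{x})+c(\oline{x})=\gamma$ holds at $x=\oline{x}$ by continuity. There $V'(\oline{x})=c_D$ by \eqref{eq:FBP_upper_stopping}. Moreover $V''(\oline{x})=0$, since $V\in C^2(\R)$ and $V''\equiv 0$ to the right of $\oline{x}$.

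Next I will read off the worst-case distortions at $\oline{x}$. Since $V'(\oline{x})=c_D>0$, \eqref{eq:bang-bang_opt} gives $\kappa^*(\oline{x})=+\delta$. The assumption $(\Psi V)(\oline{x})\ge0$ gives $\lambda^*(\oline{x})=r(1+\varepsilon)$. Then \eqref{eq:inf_gen} yields
\[
\gamma = c(\oline{x}) + \bigl(b(\oline{x})+\sigma\delta-r\Esp[Y]\bigr)c_D + r(1+\varepsilon)(\Psi V)(\oline{x}).
\]
The last term is nonnegative, so
\[
\gamma \ge \uline{c} + c_D\bigl(\uline{b}+\sigma\delta-r\Esp[Y]\bigr),
\]
using $c(\oline{x})\ge\uline{c}$, $b(\oline{x})\ge\uline{b}$ and $c_D>0$.

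Now bring in the upper bound. By the verification Theorem~\ref{thm:verification_theorem}, the $\gamma$ of the free-boundary problem equals the ergodic value. Proposition~\ref{pr:ergodic-value_upper_bound} then gives $\gamma\le\Gamma(x_1,x_2)$ for the given $x_1<x_2$. Combined with the hypothesis $\Gamma(x_1,x_2)<\uline{c}+c_D(\uline{b}+\sigma\delta-r\Esp[Y])$, this contradicts the lower bound above. Hence $(\Psi V)(\oline{x})<0$, which is \eqref{eq:xla_in_inaction_suf_cond}.

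The main point needing care is the identification of $\gamma$ with the control value, which lets the a priori bound of Proposition~\ref{pr:ergodic-value_upper_bound} apply. That identification rests on Theorem~\ref{thm:verification_theorem}, and Proposition~\ref{pr:ergodic-value_upper_bound} implicitly requires $\Esp[Y^2]<\infty$ through Lemma~\ref{lm:control_ergodic_bounds}. A second, minor point is justifying evaluation of the continuation equation exactly at the boundary point. This follows from continuity of $V,V',V''$ and of $\Psi V$, the latter by dominated convergence using linear growth of $V$ and $\Esp[|Y|]<\infty$. For the choice of $\lambda^*(\oline{x})$, no delicate tie-breaking is needed, because \eqref{eq:bang-bang_opt} already assigns $r(1+\varepsilon)$ when $\Psi V\ge0$.
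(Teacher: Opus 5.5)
Your proof is correct and is essentially the paper's argument: evaluate the continuation equation at $\oline{x}$ using $V'(\oline{x})=c_D$, $V''(\oline{x})=0$, $\kappa^*(\oline{x})=\delta$, then compare with the upper bound $\gamma\le\Gamma(x_1,x_2)$. The only difference is framing. You argue by contradiction with $\lambda^*(\oline{x})=r(1+\varepsilon)$, whereas the paper solves directly for $(\Psi V)(\oline{x})$ and reads off its sign after dividing by $\lambda^*(\oline{x})>0$, without needing the regime or Proposition~\ref{pr:ambg-intensity_bang-bang}.
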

	
	The proof is given in Appendix~\ref{app:someproofs}.
	
	\medskip
	
	For later use, we record the following identity for the operator $\Psi$
	under negative jumps. Its proof follows immediately by an integration by parts.
	
	\begin{lemma}
		\label{lm:int_op}
		Under Assumption~\ref{asm:neg_jumps}, if $f\in C^1(\R)$ has bounded derivative
		and $\lim_{y\to-\infty}f(x+y)F(y)=0$, then
		\begin{equation}
			\Psi f = -\mathbb{J} f',
			\label{eq:int_op_neg-jumps}
		\end{equation}
		where
		\[
		(\mathbb{J}f')(x)
		=
		\int_{-\infty}^0 f'(x+y)F(y)\,dy.
		\]
	\end{lemma}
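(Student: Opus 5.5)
The identity will follow from a single integration by parts in the variable $y$, applied to $(\Psi f)(x)=\int_{(-\infty,0]}\bigl(f(x+y)-f(x)\bigr)\,\rmd F(y)$. This form holds because Assumption~\ref{asm:neg_jumps} gives $\mathrm{supp}(F)\subset(-\infty,0]$ and $F(0)=1$ (with $F$ the distribution function $F(y)=\Pr[Y\le y]$). Fix $x$ and set $g(y)\defeq f(x+y)-f(x)$. Then $g\in C^1$, $g(0)=0$, $g'(y)=f'(x+y)$ and $|g(y)|\le \|f'\|_\infty |y|$. Since $\Esp[|Y|]<\infty$, the integral defining $\Psi f$ is finite.

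We then integrate by parts on $[-M,0]$ for $M>0$, using the Lebesgue--Stieltjes formula for a $C^1$ function against the right-continuous bounded-variation function $F$:
\[
\int_{(-M,0]} g(y)\,\rmd F(y) = g(0)F(0) - g(-M)F(-M) - \int_{-M}^0 g'(y)F(y)\,\rmd y .
\]
Because $g$ is continuous, no jump corrections appear, and atoms of $F$ are harmless. The first boundary term vanishes since $g(0)=0$. The remaining boundary term is $g(-M)F(-M)=f(x-M)F(-M)-f(x)F(-M)$. Its first piece tends to $0$ as $M\to\infty$ by the hypothesis $\lim_{y\to-\infty}f(x+y)F(y)=0$, and its second piece tends to $0$ since $F(-M)\to0$.

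It remains to pass to the limit $M\to\infty$ on both sides. On the left, dominated convergence applies with dominating function $|g(y)|\le\|f'\|_\infty|y|$, which is $F$-integrable. On the right, $|g'(y)F(y)|\le \|f'\|_\infty F(y)$, and
\[
\int_{-\infty}^0 F(y)\,\rmd y=\Esp[Y^-]=\Esp[|Y|]<\infty,
\]
so the integrand is in $L^1$ and the truncated integrals converge to $(\mathbb J f')(x)$. We conclude that $(\Psi f)(x)=-(\mathbb J f')(x)$ for every $x$.

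The only step needing care is the boundary term at $-\infty$. It is exactly what the hypothesis $\lim_{y\to-\infty}f(x+y)F(y)=0$ provides, together with $F(-\infty)=0$. Integrability of the tail of $F$ is guaranteed by the standing moment assumption $\Esp[|Y|]<\infty$.
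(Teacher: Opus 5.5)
Your proposal is correct and is the integration by parts the paper invokes; the paper states only that the lemma ``follows immediately by an integration by parts'' and gives no further detail. Your truncation to $(-M,0]$, the treatment of the boundary term at $-\infty$ via the stated hypothesis, and the dominated-convergence limits using $\int_{-\infty}^0 F(y)\,\rmd y=\Esp[|Y|]<\infty$ supply the rigor the paper leaves implicit.
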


	%%%%%%%%%%%%

	%-------------------------------------------------
	\subsection{Candidate solution and verification}
	\label{sec:guess_verify}
	%-------------------------------------------------
	
	In this subsection we construct a candidate solution to the robust ergodic singular control problem
	\eqref{eq:ergodic_singular_control_problem} and verify its optimality.
	The construction relies on the structural results obtained in
	Section~\ref{sec:bang-bang_negative-jumps}, which show that under negative jumps
	the optimal distortions have a threshold (bang-bang) form.
	Motivated by these results, we look for a solution characterized by four unknown
	thresholds corresponding to the reflecting barriers and to the switching points
	of the drift and intensity distortions.
	
	For some $\uline{x}_*, \oline{x}_*, x_*^\kappa, x_*^\lambda \in \R$ such that
	$\uline{x}_* < x_*^\kappa < x_*^\lambda < \oline{x}_*$, define the intervals
	\begin{align*}
		I_1^* &\defeq (\uline{x}_*, x^\kappa_*), \quad 
		I_2^* \defeq (x^\kappa_*, x^\lambda_*), \quad 
		I_3^* \defeq (x^\lambda_*, \oline{x}_*),
	\end{align*}
	and consider the bang-bang distortions
	\begin{align} \label{eq:bang-bang_candidate}
		\kappa^*(x) &= 
		\begin{cases}
			+\delta, & x \geq x_*^\kappa, \\
			-\delta, & x < x_*^\kappa,
		\end{cases}
		\qquad
		\lambda^*(x) =  
		\begin{cases}
			r(1+\varepsilon), & x \leq x_*^\lambda, \\
			r(1-\varepsilon), & x > x_*^\lambda.
		\end{cases}
	\end{align}
	
	To obtain a tractable characterization, we restrict attention to the case of
	constant drift.
	
	\begin{assumption}[Constant drift]\label{asm:constant_drift}
		The drift $b(x)$ in \eqref{eq:benchmark_model} is constant, which we denote by
		$b(x)\equiv b$.
	\end{assumption}
	
	Let $\InfGen^{*} = \InfGen^{\kappa^*,\lambda^*}$.
	We look for a function $H_*$ satisfying
	\begin{subequations}
		\begin{empheq}[left=\empheqlbrace]{alignat=2}
			(\InfGen^{*} H_*)(x) + c'(x) &= 0 & x &\in \cup_{i=1}^3 I_i^*, \label{eq:FBP_candidate_continuation} \\
			H_*(x) &= -c_U, & x &\leq \uline{x}_*, \label{eq:FBP_candidate_lower_stopping} \\
			H_*(x) &= c_D, & x & \geq \oline{x}_*, \label{eq:FBP_candidate_upper_stopping} \\ 
			H_*(x_*^\kappa)               &= 0,           \label{eq:FBP_candidate_xk} \\
			(\mathbb{J} H_*)(x_*^\lambda) &= 0,           \label{eq:FBP_candidate_xla} \\
			H_*&\in C^1(\R), \label{eq:FBP_candidate_C2} \\
			H_*&\in C^3\lrp{\cup_{i=1}^3 I_i^*}, \label{eq:FBP_candidate_C4}
		\end{empheq}
	\end{subequations}
	
	We impose the following regularity and convexity assumptions.
	
	\begin{assumption}[Convexity and regularity of the cost function]\label{asm:C2_convex_cost}
		The cost function $c:\R\to\R$ is $C^2(\R)$, convex everywhere and strictly convex in
		$[\uline{x}_*, \oline{x}_*]$, and attains its unique minimum at $x = 0$.
	\end{assumption}
	
	\begin{assumption}[Convexity at ambiguity thresholds]\label{asm:V_convex_ambiguity-thresholds}
		$H_*'(x_*^\kappa) > 0$ and $H_*'(x_*^\lambda) > 0$.
	\end{assumption}
	
	Define
	\begin{align}\label{eq:gamma_opt}
		\gamma_*  \defeq 
		%-c_U(b - \delta\sigma - r\Esp[Y]) - c_U(1+\varepsilon)r\Esp[Y] + c(\uline{x}_*) =
		c_U(\delta\sigma - b - \varepsilon r\Esp[Y]) + c(\uline{x}_*).
	\end{align}
	
	We now prove that the candidate function $V_*$, defined by $V_*' = H_*$,
	together with $\gamma_*$, solves the free-boundary problem
	\eqref{eq:FBP_continuation}--\eqref{eq:FBP_C2}.
	By Theorem~\ref{thm:verification_theorem}, this implies optimality for the ergodic
	control problem.
	
	\begin{theorem}[Verification of the candidate solution]\label{thm:verifying_candidate}
		Under Assumptions
		\ref{asm:neg_jumps},
		\ref{asm:constant_drift},
		\ref{asm:C2_convex_cost}
		and
		\ref{asm:V_convex_ambiguity-thresholds},
		the pair $(V_*,\gamma_*)$ solves the free-boundary problem
		\eqref{eq:FBP_continuation}--\eqref{eq:FBP_C2}.
	\end{theorem}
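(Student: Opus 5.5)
We construct $V_*$ by integrating $H_*$: set $V_*(x)=\int_{x^\kappa_*}^x H_*(y)\,\rmd y + C$, with the additive constant irrelevant because only derivatives of $V_*$ and differences $V_*(x+y)-V_*(x)$ enter \eqref{eq:FBP_continuation}--\eqref{eq:FBP_C2}. Conditions \eqref{eq:FBP_lower_stopping}--\eqref{eq:FBP_upper_stopping} then follow at once from \eqref{eq:FBP_candidate_lower_stopping}--\eqref{eq:FBP_candidate_upper_stopping}. Condition \eqref{eq:FBP_C2} follows from \eqref{eq:FBP_candidate_C2}, since $V_*\in C^2$ exactly when $H_*\in C^1$.

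The first substantive step is to show that $H_*$ is strictly increasing on $(\uline x_*,\oline x_*)$. This gives \eqref{eq:FBP_V'_constraints} and strict convexity of $V_*$, so that Propositions~\ref{pr:ambg-drift_bang-bang}--\ref{pr:ambg-intensity_bang-bang} may be applied to identify the thresholds.

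\begin{itemize}
\item \emph{Monotonicity of $H_*$.} Differentiating the ODE \eqref{eq:FBP_candidate_continuation} on each $I_i^*$ (possible by \eqref{eq:FBP_candidate_C4}) gives an equation for $G=H_*'$ of the form $\InfGen^{*}G+\lambda^{*}(\cdot)\,(\text{nonlocal terms})+c''=0$, with $c''>0$ by Assumption~\ref{asm:C2_convex_cost}. Note that $H_*'=0$ outside $[\uline x_*,\oline x_*]$, and $C^1$ pasting forces $H_*'(\uline x_*)=H_*'(\oline x_*)=0$.
\item I would argue by a minimum principle: if $G$ had a negative minimum at an interior point of some $I_i^*$, then there $G'=0$ and $G''\ge 0$. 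The integral term $\lambda\Psi G$ would be nonnegative at a global minimum, and $c''>0$, which contradicts the equation.
\item At the junctions $x^\kappa_*$ and $x^\lambda_*$ the coefficients jump, but $G$ is continuous. Assumption~\ref{asm:V_convex_ambiguity-thresholds} gives $G>0$ at those points, so a negative minimum cannot sit there either.
\item This yields $G\ge 0$, and strictness follows from the strong version of the argument. Hence $H_*$ increases from $-c_U$ to $c_D$, and \eqref{eq:FBP_V'_constraints} holds.
\end{itemize}

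The second step checks that the thresholds are consistent with \eqref{eq:bang-bang_opt}, so that $\InfGen^{\kappa^*,\lambda^*}$ coincides with the supremum operator in \eqref{eq:optimal_ambiguity_inner_max}.

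\begin{itemize}
\item \emph{Drift threshold.} Since $H_*$ is increasing and $H_*(x^\kappa_*)=0$ by \eqref{eq:FBP_candidate_xk}, the sign of $H_*$ matches $\kappa^*$ in \eqref{eq:bang-bang_candidate}.
\item \emph{Intensity threshold.} By Lemma~\ref{lm:int_op}, $\Psi V_*=-\mathbb J H_*$. Because $H_*$ is increasing and $F\ge0$, the map $x\mapsto \mathbb J H_*(x)$ is nondecreasing; strict monotonicity comes from $\Pr[Y<0]>0$. Together with \eqref{eq:FBP_candidate_xla}, this shows $\Psi V_*>0$ to the left of $x^\lambda_*$ and $\Psi V_*<0$ to the right, matching $\lambda^*$.
\item Consequently $\sup_{(\kappa,\lambda)\in\Lambda}\InfGen^{\kappa,\lambda}V_*=\InfGen^{*}V_*$ everywhere.
\end{itemize}

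The third step establishes \eqref{eq:FBP_continuation} and \eqref{eq:FBP_stopping}. Define $\Phi(x)=\InfGen^{*}V_*(x)+c(x)$.

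\begin{itemize}
\item On each $I_i^*$ the derivative $\Phi'$ equals $\InfGen^{*}H_*+c'=0$. Here one uses that the coefficients are constant on each interval under Assumption~\ref{asm:constant_drift}, and that $\Psi$ commutes with differentiation.
\item $\Phi$ is continuous across $x^\kappa_*$ because $H_*(x^\kappa_*)=0$ kills the jump of the $\sigma\kappa V_*'$ term. It is continuous across $x^\lambda_*$ because $\Psi V_*(x^\lambda_*)=0$ kills the jump of the $\lambda\Psi V_*$ term. Hence $\Phi$ is constant on $[\uline x_*,\oline x_*]$.
\item Evaluating at $\uline x_*$: there $V_*'=-c_U$, $V_*''=0$, $\kappa^*=-\delta$ and $\lambda^*=r(1+\varepsilon)$. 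Since every point $\uline x_*+y$ with $y\le0$ lies in the linear region, $\Psi V_*(\uline x_*)=-c_U\Esp[Y]$. This gives $\Phi(\uline x_*)=c_U(\delta\sigma-b-\varepsilon r\Esp[Y])+c(\uline x_*)=\gamma_*$, which is \eqref{eq:FBP_continuation}.
\item For $x<\uline x_*$ the same computation gives $\Phi(x)-\gamma_*=c(x)-c(\uline x_*)\ge0$, provided $\uline x_*\le0$. This follows from $\Phi'=0$ at the boundary: the condition $c'(\uline x_*)=-\text{(nonlocal/drift terms of }H_*')$ with $H_*'\ge0$ yields $c'(\uline x_*)\le0$. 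The convexity of $c$ then gives $c$ decreasing on $(-\infty,\uline x_*]$.
\item For $x>\oline x_*$, $\Phi(x)=c(x)+c_D(b+\sigma\delta-r\Esp[Y])+r(1-\varepsilon)\Psi V_*(x)$, where now the jump integral reaches into the region where $V_*$ is convex. Differentiating gives $\Phi'(x)=c'(x)+r(1-\varepsilon)\Psi H_*(x)$ with $\Psi H_*(x)=\Esp[H_*(x+Y)-c_D]\le0$ and tending to $0$. I would show $\Phi'\ge0$ for $x>\oline x_*$ by comparing with the value $\Phi'(\oline x_*+)=\Phi'(\oline x_*-)=0$ (continuity from $H_*\in C^1$). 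The point is that $c'$ is nondecreasing and $\Psi H_*(x)$ is nondecreasing in $x$, because $H_*$ is increasing. Hence $\Phi\ge\gamma_*$.
\end{itemize}

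\textbf{Main obstacle.} I expect the monotonicity of $H_*$ to be the hardest part. The maximum-principle argument must handle both the nonlocal term, which reaches below $\uline x_*$ where $G=0$, and the discontinuous coefficients at the two thresholds. There I would lean on Assumption~\ref{asm:V_convex_ambiguity-thresholds} and on the continuity of $G$.
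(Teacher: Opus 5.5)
Your overall route matches the paper's. Your $\Phi$ is the paper's $\mathcal R$. It is made continuous at $x^\kappa_*,x^\lambda_*$ by $H_*(x^\kappa_*)=0$ and $(\mathbb J H_*)(x^\lambda_*)=0$, is constant on the inaction region, and equals $\gamma_*$ at $\underline x_*$. A minimum principle for $W_*=H_*'$ gives monotonicity, and one-sided information at the barriers handles the exterior. Your explicit check that the candidate thresholds agree with \eqref{eq:bang-bang_opt} is a genuine addition that the paper leaves implicit. Ties there are harmless: a zero of $H_*$ or of $\Psi V_*$ makes the choice of $\kappa$ or $\lambda$ irrelevant. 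In fact every step of the verification only needs $H_*'\ge 0$, not strict convexity. Two steps, however, are wrong as written.

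\emph{Upper barrier.} You assert $\Phi'(\overline x_*+)=\Phi'(\overline x_*-)=0$ by continuity from $H_*\in C^1$. This fails. On $I_3^*$, $\Phi'$ contains the term $\tfrac12\sigma^2H_*''$, and smooth fit only makes $V_*$ of class $C^2$, so $H_*''$ generally jumps at $\overline x_*$. Letting $x\uparrow\overline x_*$ in $\Phi'=0$ and using $H_*'(\overline x_*)=0$ gives $\Phi'(\overline x_*+)=c'(\overline x_*)+r(1-\varepsilon)(\Psi H_*)(\overline x_*)=-\tfrac12\sigma^2H_*''(\overline x_*-)$. This is $\ge 0$ only because $H_*'\ge0$ on $I_3^*$ and vanishes at $\overline x_*$, which forces $H_*''(\overline x_*-)\le 0$; this is the paper's left Taylor expansion in Steps 4 and 6. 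After that, your monotonicity of $\Phi'$ on $(\overline x_*,\infty)$ finishes the job.

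\emph{Lower barrier.} The same mechanism yields $c'(\underline x_*)\le 0$. At $\underline x_*$ the drift and jump terms vanish, since $H_*'(\underline x_*)=0$ and $(\Psi H_*)(\underline x_*)=0$. The sign comes from the diffusion term, $c'(\underline x_*)=-\tfrac12\sigma^2H_*''(\underline x_*+)\le 0$ (Step 3), not from drift or nonlocal terms.

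\emph{Minimum principle.} Assumption~\ref{asm:C2_convex_cost} gives strict convexity of $c$, not $c''>0$; $c''$ may vanish at isolated points. If $c''(x_0)=0$ at an interior minimiser, the equation only says that the nonnegative terms $\tfrac12\sigma^2W_*''(x_0)$, $\lambda^*(\Psi W_*)(x_0)$ and $c''(x_0)$ all vanish. That is not yet a contradiction. The paper closes this by exploiting $(\Psi W_*)(x_0)=0$. Since $W_*(x_0+y)\ge W_*(x_0)$ for $y\le 0$, the minimal value propagates along the jump directions down to $\underline x_*$. For a negative minimum this is impossible, because $W_*=0$ at and below $\underline x_*$. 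For a zero minimum the paper concludes $W_*$ is constant on $[\underline x_*,x_0]$, hence $c''\equiv 0$ there, contradicting strict convexity. Your appeal to a \emph{strong version of the argument} must be replaced by this propagation step. Since $H_*'\ge0$ suffices for the theorem, the negative-minimum case is all you actually need. For quadratic cost, where $c''=2$, your version is fine as it stands.
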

	
	\begin{proof}
		The proof is organized in six steps.
		\smallskip
		
		\noindent \textbf{Step 1. Validity of the HJB on $\pmb{(\uline{x}_*, \oline{x}_*)}$.} 
		
		With $a^*(x) \defeq b + \sigma\kappa^*(x) - r\Esp[Y]$, let
		\begin{align*}
			\mathcal{R}(x) \defeq c(x) + (\InfGen^* V_*)(x) - \gamma_* 
			= 
			c(x) + \frac{1}{2}\sigma^2 H'_*(x) + a^*(x)H_*(x) - \lambda^*(x)(\mathbb{J}H_*)(x) - \gamma_*.
		\end{align*}
		We now prove that the HJB equation is satisfied on the inaction region $I^* = (\uline{x}_*, \oline{x}_*)$. 
		
		\begin{itemize}[label=$\triangleright$]
			\item \emph{$\mathcal{R}(x) = 0$  for $x \in I^*$}:
			Notice that, due to \eqref{eq:FBP_candidate_C2}, and using that $\mathbb{J}H_*$ is continuous as $H_*$ is bounded and $\Esp[Y]$ is finite, we have that
			\begin{align*}
				\mathcal{R}(x+) - \mathcal{R}(x-) = (a^*(x+) - a^*(x-))H_*(x) - (\lambda^*(x+) - \lambda^*(x-))(\mathbb{J}H_*)(x).
			\end{align*}
			Hence, $\mathcal{R} \in C^0(\R)$ after \eqref{eq:FBP_candidate_xk} and \eqref{eq:FBP_candidate_xla}. 
			
			The IDE \eqref{eq:FBP_candidate_continuation} yields that $\mathcal{R}' \equiv 0$ on $\cup_{i=1}^3I_i^*$. That is, $\mathcal{R}$ is constant on each interval $I_i^*$. 
			
			By recalling $\gamma_*$ in \eqref{eq:gamma_opt} one obtains that $\mathcal{R}(\uline{x}_*) = 0$ and, then, due to the continuity of $\mathcal{R}$ \details{(which results from \eqref{eq:FBP_candidate_C2})}{} and its constancy on each $I_i^*$, we obtain that $\mathcal{R} \equiv \mathcal{R}(\uline{x}_*) = 0$ on $[\uline{x}_*, \oline{x}_*]$.
			
			\item \emph{$-c_U \leq V_*'(x) \leq c_D$ for $x \in (\uline{x}_*, \oline{x}_*)$}:
			This claim follows after recalling that $V_*' = H_*$ is non decreasing, along with $H_*(\uline{x}_*) = -c_U$ and $H_*(\oline{x}_*) = c_D$.
		\end{itemize}
		
		\noindent \textbf{Step 2. $\pmb{H_*}$ is increasing in $\pmb{[\uline{x}_*, \oline{x}_*]}$.} 
		
		Inside $\cup_{i=1}^3 I_i^*$ we have higher smoothness of $\mathcal{R}$ due to \eqref{eq:FBP_candidate_C4}: $\mathcal{R} \in C^2(\cup_{i=1}^3 I_i^*)$. Moreover, differentiating $\mathcal{R}$ twice on each interval $I_i$, and using \eqref{eq:FBP_candidate_continuation}, one obtains that
		\begin{align}\label{eq:IDE_W}
			0 = \mathcal{R}''(x) = \frac{1}{2}\sigma^2W''_*(x) + a^*(x)W_*'(x) + \lambda^*(x)(\Psi W_*)(x) + c''(x).
		\end{align}
		for $W_* = H_*'$, for all $x\in\cup_{i=1}^3 I_i^*$.
		
		Since $W_*$ is continuous everywhere (due to \eqref{eq:FBP_candidate_C2}), it attains its minimum $m$ over $[\uline{x}_*, \oline{x}_*]$ at some point $x_0$ in the same compact interval. A maximum principle argument (see, e.g., Theorem 2.8 and Corollary 2.11 from \cite{huang_maximum_2018}) for integro-differential Waldenfels-type operators implies that $x_0$ cannot be an interior point of the inaction region. Alternatively, a direct verification can be straightforwardly done.
		
		Indeed, assume that $x_0 \in (\uline{x}_*, \oline{x}_*)$. Since $W_*(\uline{x}) = W_*(\oline{x}) = 0$, then $m \leq 0$, which implies that $x_0 \notin \{x_*^\kappa, x_*^\lambda\}$ due to Assumption \ref{asm:V_convex_ambiguity-thresholds}. Hence, $x_0 \in I_i^*$ for some $i = 1,2,3$. Since $W_*$ is sufficiently smooth in each $I_i^*$, it follows that $W_*'(x_0)=0$, $W_*''(x_0)\ge 0$. Additionally, that $x_0$ is a minimizer on the inaction region and $W_*(x)=0$ for all $x\leq\uline{x}$ means that $W_*(x_0+y)-W_*(x_0)\ge 0$ for all $y\leq 0$, implying that the integral term $(\intOp W_* - W_*)(x_0)$ is non-negative. Plugging these into \eqref{eq:IDE_W} gives that
		\begin{align*}
			0=\frac{1}{2}\sigma^2W_*''(x_0)+\lambda(x_0)(\Psi W_*)(x_0) + c''(x_0).    
		\end{align*}
		The right-hand side is a sum of nonnegative terms, which implies that all the addends must vanish for the sum to do the same. In particular, $(\Psi W_*)(x_0) = 0$, which alongside $F$ having support on $(-\infty,0)$, $W_*(x_0+y)-W_*(x_0) \geq 0$ for all $y \leq 0$, and $W_*$ being continuous, forces $W_*$ to be constant (equal to $W_*(x_0) = 0$) over $[\uline{x}_*,x_0]$. Plugging back into~\eqref{eq:IDE_W} the constancy of $W_*$ over $[\uline{x}_*,x_0]$ yields that $c'' \equiv 0$ over the same interval, which contradicts that $c$ is strictly convex there.
		
		Therefore $W_* > 0$ on $(\uline{x}_*, \oline{x}_*)$, so $H_*$ is strictly increasing and $V_*$ is strictly convex on the inaction region.
		
		\medskip
		
		\noindent \pmb{Step 3. $\uline{x}_* \leq 0.$}
		
		Since $H_*(\uline{x}_*) = -c_U$, $H_*'(\uline{x}_*) = 0$ and $(\Psi H_*)(\uline{x}_*) = 0$\details{(since $H_* \equiv -c_U$ on $(-\infty, \uline{x}_*]$)}{}, and due to $H_*$ satisfying the IDE \eqref{eq:FBP_candidate_continuation} on $I_1^*$ along with $H_* \in C^1(\R)$, one has that
		\begin{align*}
			0 
			&= 
			\lim_{x\downarrow\uline{x}_*} 
			\lrp{\frac{1}{2}\sigma^2H_*''(x) + a^*(x)H_*'(x) + \lambda^*(x)(\Psi H_*)(x) + c'(x)}
			= \frac{1}{2}\sigma^2 H_*''(\uline{x}_*+) + c'(\uline{x}_*),
		\end{align*}
		meaning that the right second derivative of $H_*$ at $x = \uline{x}_*$ takes the form $H_*''(\uline{x}_*+) = -2c'(\uline{x}_*)/\sigma^2$. Since $H_*\in C^2(I_1^*)$, a right Taylor expansion at $x = \uline{x}_*$ gives, for $h > 0$,
		\begin{align*}
			H_*(\uline{x}_* + h) 
			&= 
			H_*(\uline{x}_*) + H_*'(\uline{x}_*)h + \frac{1}{2}H_*''(\uline{x}_*+)h^2 + o(h^2)
			= -c_U - \sigma^{-2}c'(\uline{x}_*)h^2 + o(h^2).
		\end{align*}
		Hence, $c'(\uline{x}_*) \leq 0$ must hold to avoid contradicting the fact that $H_*$ is increasing. Hence, as $c$ is convex and it attains its global minimum at $x = 0$, $\uline{x}_* \leq 0$ must hold true. 
		\medskip
		
		\noindent \pmb{Step 4. $\oline{x}_* > 0$}
		
		Due to \eqref{eq:FBP_candidate_continuation}-\eqref{eq:FBP_candidate_upper_stopping}, using $H_* \in C^1(\R)$, and applying Lemma \ref{lm:int_op} to $H_*$, one has that
		\begin{align*}
			0 
			&= 
			\lim_{x\uparrow\oline{x}_*} 
			\lrp{\frac{1}{2}\sigma^2H_*''(x) + a^*(x)H_*'(x) - \lambda^*(x)(\mathbb{J}H_*')(x) + c'(x)}
			= \frac{1}{2}\sigma^2 H_*''(\oline{x}_*-) - \lambda^*(\oline{x}_*)(\mathbb{J}H_*')(\oline{x}_*) + c'(\oline{x}_*),
		\end{align*}
		that is, the left second derivative of $H$ at $x = \oline{x}_*$ takes the form $H_*''(\oline{x}_*-) = 2\sigma^{-2}\lrp{\lambda^*(\oline{x}_*)(\mathbb{J}H_*')(\oline{x}_*) - c'(\oline{x}_*)}$. Hence, a left Taylor expansion at $x = \oline{x}_*$ yields, for $h > 0$,
		\begin{align}\label{eq:H_left-Taylor_at_xU}
			H_*(\oline{x}_* - h) 
			&= 
			H_*(\oline{x}_*) - H_*'(\oline{x}_*)h + \frac{1}{2}H_*''(\oline{x}_*-)h^2 + o(h^2)
			= c_D + \sigma^{-2}\lrp{\lambda^*(\oline{x}_*)(\mathbb{J}H_*')(\oline{x}_*) - c'(\oline{x}_*)}h^2 + o(h^2).
		\end{align}
		Then, $c'(\oline{x}_*) \geq \lambda^*(\oline{x}_*)(\mathbb{J}H_*')(\oline{x}_*) > 0$ must hold to avoid contradicting the fact that $H_*$ is increasing. Since $c$ is convex with global minimum at $x = 0$, this implies that $\oline{x}_* > 0$.
		\medskip
		
		\noindent \textbf{Step 5. Validity of the HJB on $\pmb{(-\infty, \uline{x}_*]}$.}
		
		\begin{itemize}[label=$\triangleright$]
			\item \emph{$\mathcal{R}(x) \geq 0$ for $x \in (-\infty, \uline{x}_*]$}:
			On $(-\infty, \uline{x}_*]$, we have that $V_*' \equiv -c_U$, $V_*'' \equiv 0$, $a^* \equiv b - \delta\sigma - r\Esp[Y]$, $\lambda^* \equiv r(1+\varepsilon)$ and, by Lemma \ref{lm:int_op}, $\Psi V_* \equiv -c_U\Esp[Y]$. Hence, for $x\leq \uline{x}_*$,
			\begin{align*}
				\mathcal{R}(x) 
				&= \frac{1}{2}\sigma^2V_*''(x) + a^*(x)V_*'(x) + \lambda^*(x)(\Psi V_*)(x)  + c(x) - \gamma_* \\
				&= -(b - \delta\sigma - r\Esp[Y])c_U - r(1+\varepsilon)c_U\Esp[Y] + c(x) - \gamma_* \\
				&= c(x) - c(\uline{x}_*).
			\end{align*}
			
			Since $\uline{x}_* < 0$ and $c$ is a convex function attaining its minimum at $x = 0$, then $c(x) - c(\uline{x}_*) \geq 0$ for $x \leq \uline{x}_*$ and, consequently, $\mathcal{R}(x) \geq 0$. 
			
			\item \emph{$-c_U = V_*'(x) \leq c_D$ for $x \in (-\infty, \uline{x}_*]$}:
			This claim follows trivially from the construction $V_*' = H_*$ and \eqref{eq:FBP_candidate_lower_stopping}.
		\end{itemize}
		\medskip
		
		\noindent \textbf{Step 6. Validity of the HJB on $\pmb{[\oline{x}_*, \infty)}$.}
		
		\begin{itemize}[label=$\triangleright$]
			\item \emph{$\mathcal{R}(x) \geq 0$ for $x \in [\oline{x}_*, \infty)$}:
			On the interval $(\oline{x}_*, \infty)$ we have that: $H_*''$ and $H_*'$ vanish; $\Psi H_*' > 0$ as $H_*$ is strictly increasing on $I^*$ zero elsewhere; and $c'' \geq 0$ by convexity. Hence, $\mathcal{R}'' > 0$ on the same interval.  
			
			On the other hand, 
			\begin{align*}
				\mathcal{R}'(\oline{x}_*+) =  c'(\oline{x}_*) - \lambda^*(\oline{x}_*)(\mathbb{J}H_*')(\oline{x}_*) \geq 0
			\end{align*}
			where the inequality is justified as, otherwise, it would conflict with the increasing monotonicity of $H_*$, as shown in the left-Taylor expansion \eqref{eq:H_left-Taylor_at_xU}.
			
			We have previously proved that $\mathcal{R} \equiv 0$ on $I^*$. In particular, since $\mathcal{R}$ is continuous, then $\mathcal{R}(\oline{x}_*) = 0$, which, combined with $\mathcal{R}'(\oline{x}_*+) \geq 0$ and $\mathcal{R}''(x) > 0$ for all $x\in(\oline{x}_*,\infty)$, proves that $\mathcal{R} \geq 0$ on $[\oline{x}_*,\infty)$.
			
			\item \emph{$-c_U \leq V_*'(x) = c_D$ for $x \in [\oline{x}_*, \infty)$}:
			This claim is trivial after $V_*' = H_*$ and \eqref{eq:FBP_candidate_upper_stopping}.    
		\end{itemize}
	\end{proof}
	\begin{remark}
		Theorem \ref{thm:verifying_candidate} is stated under Regime 1.
		However, analogous arguments apply in Regime 2.
		In that case the intensity distortion remains constant on the inaction region,
		which simplifies the free-boundary system and reduces the number of matching
		conditions. In this regime, the threshold $x^\lambda$ is determined by the sign
		change of $\Psi V$ (see Section~\ref{sec:numerics}, Algorithm \ref{alg:two-stages_solver}).
	\end{remark}
	
	\begin{remark}\label{rmk:convexity_V}
		Proposing a candidate $V_*$ with the simplified bang-bang regions
		\eqref{eq:bang-bang_candidate} implicitly assumes convexity of $V_*$,
		as discussed in Section~\ref{sec:bang-bang_negative-jumps}.
		The proof of Theorem~\ref{thm:verifying_candidate} shows that this convexity
		is not restrictive but is in fact a property that must hold for any solution
		of the free-boundary problem
		\eqref{eq:FBP_continuation}--\eqref{eq:FBP_C2}
		under Assumption~\ref{asm:neg_jumps}.
	\end{remark}
	
	%-------------------------------------------------%
	\subsection{Explicit solution under exponentially distributed negative jumps}
	\label{sec:negative_exp_jumps}
	%-------------------------------------------------%
	
	In this subsection we specialize the model to the case of exponentially distributed
	negative jumps. This specification yields a tractable representation of the
	integro-differential free-boundary problem and allows us to reduce the characterization
	of the optimal policy to a system of ordinary differential equations.
	The exponential case is particularly useful for two reasons. First, it provides
	additional analytical insight into the structure of the optimal policy. Second,
	it leads to a formulation that is well suited for numerical implementation.
	
	\begin{assumption}[Negative exponential jumps]\label{asm:neg-exp_jumps}
		$F(y) = e^{\mu (y\wedge 0)}$, for all $y\in\R$, and for some $\mu > 0$.
	\end{assumption}
	
	A direct consequence of Assumption \ref{asm:neg-exp_jumps} is that the integral operator
	$\intOp$ satisfies a simple relation, which follows as a particular case of
	Lemma \ref{lm:int_op}.
	
	\begin{corollary}\label{cor:int_op_neg-exp-jumps}
		Under Assumption \ref{asm:neg-exp_jumps}, and with $f\in C^1(\R)$ such that
		$\lim_{y\to-\infty}f(x + y)e^{\mu y} = 0$, the operator $\intOp$ satisfies
		\begin{equation}\label{eq:der_int_op}
			\intOp f - f = -\frac{1}{\mu}\intOp f'.
		\end{equation}
	\end{corollary}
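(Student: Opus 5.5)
The identity follows by specializing Lemma~\ref{lm:int_op} to the exponential distribution. The only real care needed is that the Corollary drops the bounded-derivative hypothesis of that lemma. So I would redo the integration by parts directly, which the Corollary's decay assumption makes legitimate, rather than cite the lemma verbatim.

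\emph{Step 1: identify the jump law.} Under Assumption~\ref{asm:neg-exp_jumps}, $F(y)=e^{\mu y}$ for $y\le 0$ and $F(y)=1$ for $y\ge 0$. Thus $F$ is continuous, with no atom at $0$, and has density $\mu e^{\mu y}\Ind_{\{y<0\}}$. Hence, for fixed $x$,
\[
(\intOp f)(x)=\int_{-\infty}^0 f(x+y)\,\mu e^{\mu y}\,\rmd y,
\qquad
(\intOp f')(x)=\int_{-\infty}^0 f'(x+y)\,\mu e^{\mu y}\,\rmd y .
\]

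\emph{Step 2: integrate by parts on $[-M,0]$.} Take $u=f(x+y)$ and $\rmd v=\mu e^{\mu y}\rmd y$, so $v=e^{\mu y}=F(y)$. This gives
\[
\int_{-M}^0 f(x+y)\,\mu e^{\mu y}\,\rmd y
= f(x)-f(x-M)e^{-\mu M}-\int_{-M}^0 f'(x+y)\,e^{\mu y}\,\rmd y .
\]
The boundary term $f(x-M)e^{-\mu M}$ vanishes as $M\to\infty$ by the Corollary's hypothesis $\lim_{y\to-\infty}f(x+y)e^{\mu y}=0$.

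\emph{Step 3: pass to the limit.} Multiply by $\mu/\mu$ inside the remaining integral, so that it equals $\tfrac1\mu\int_{-M}^0 f'(x+y)\,\mu e^{\mu y}\,\rmd y$. Letting $M\to\infty$ then yields
\[
(\intOp f)(x)-f(x)=-\tfrac{1}{\mu}(\intOp f')(x).
\]
Equivalently, $\mathbb J f'=\tfrac1\mu\intOp f'$, which combined with $\Psi f=-\mathbb J f'$ recovers exactly \eqref{eq:int_op_neg-jumps}.

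\emph{Main obstacle: convergence of the improper integrals.} Step~2 shows that, for each $M$, the two finite integrals differ by $f(x)$ minus a boundary term that tends to $0$. Consequently $\int_{-M}^0 f(x+y)\mu e^{\mu y}\rmd y$ converges as $M\to\infty$ if and only if $\int_{-M}^0 f'(x+y)e^{\mu y}\rmd y$ does. I would therefore state \eqref{eq:der_int_op} as holding whenever either side is well defined, and the other side is then well defined too. In every place the Corollary is used later, $f$ is $H_*$ or $V_*$: $H_*$ is bounded and $V_*$ grows linearly, so both integrals converge absolutely, and this convergence issue never arises.
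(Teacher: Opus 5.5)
Your proof is correct and follows the paper's route. The paper obtains \eqref{eq:der_int_op} by specializing Lemma~\ref{lm:int_op}, itself an integration by parts against $F$, to $F(y)=e^{\mu y}$ on $y\le 0$, where $\mathbb{J}f'=\frac{1}{\mu}\intOp f'$; your truncation to $[-M,0]$ is a sound refinement that covers the bounded-derivative hypothesis of the lemma which the Corollary silently drops, and your caveat on convergence of the improper integrals correctly flags the tacit integrability assumption, which is harmless for $f=H_*$ or $f=V_*$.
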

	
	Assume now that Regime 1 in \eqref{eq:regimes} holds
	(we provide in Lemma \ref{lm:Regime-1_parabolic_cost} a sufficient condition for
	Regime 1 under $c(x)=x^2$), and let Assumptions
	\ref{asm:C2_convex_cost}, \ref{asm:V_convex_ambiguity-thresholds},
	and \ref{asm:neg-exp_jumps} hold.
	Then the candidate solution described in Section~\ref{sec:guess_verify}
	is strictly convex (see Remark \ref{rmk:convexity_V}), and the optimal ambiguity
	functions therefore take the threshold form \eqref{eq:bang-bang_neg-jumps}.
	
	Recall that our candidate $V_*$ should satisfy $H_* = V_*'$ and solve the
	free-boundary problem \eqref{eq:FBP_candidate_continuation}--\eqref{eq:FBP_candidate_C4},
	for free boundaries
	$\uline{x}_* < x_*^\kappa < x_*^\lambda < \oline{x}_*$.
	In particular,
	\begin{align}
		0 &= \frac{1}{2}\sigma^2H_*''(x) + a^*(x)H_*'(x) -\lambda^*(x)H_*(x) + \lambda^*(x) (\intOp H_*)(x) + c'(x), \nonumber \\
		&= \frac{1}{2}\sigma^2H_*''(x) + a^*(x)H_*'(x) - \lambda^*(x)H_*(x) + \mu\lambda^*(x)(V_*(x) - \intOp V_*(x)) + c'(x), \label{eq:IDE_H}
	\end{align}
	for all $x \in \cup_{i=1}^3 I_i^*$,
	where the second equality follows from Corollary \ref{cor:int_op_neg-exp-jumps}.
	Under Assumption \ref{asm:neg-exp_jumps}, $\Esp[Y] = -(1/\mu)$ and~$a^*(x) = b + \sigma\kappa^*(x) + r/\mu$.
	As shown in the proof of Theorem \ref{thm:verifying_candidate},
	$V_*$ satisfies $\InfGen^*V_* + c \equiv \gamma_*$ in the inaction region~$(\uline{x}_*,\oline{x}_*)$, which, after rearranging terms, gives
	\begin{align}\label{eq:int_op_V_substitution}
		\lambda^*(x)(\intOp V_*(x) - V_*(x)) 
		&= \gamma_* - \frac{1}{2}\sigma^2H_*'(x) - a^*(x)H_*(x) - c(x).
	\end{align}
	Substituting \eqref{eq:int_op_V_substitution} into \eqref{eq:IDE_H} yields the
	second-order ordinary differential equation
	\begin{align}\label{eq:ODE_H}
		\frac{1}{2}\sigma^2H_*''(x) + \left(a^*(x) + \frac{1}{2}\mu\sigma^2\right) H_*'(x) + (a^*(x)\mu - \lambda^*(x))H_*(x) = - \mu c(x) - c'(x) + \mu\gamma_*,
	\end{align}
	for $x \in \cup_{i=1}^3 I_i^*$.
	Restricting \eqref{eq:ODE_H} to a single interval $I_i^*$ yields the constant-coefficient ODE
	\begin{align}\label{eq:ODE_H_const_coeff}
		a_{i,3}H_*''(x) + a_{i,2} H_*'(x) + a_{i,1}H_*(x) + b_2c(x) + b_1c'(x) + b_0 = 0,
	\end{align}
	where $b_2 = \mu$, $b_1 = 1$, $b_0 = -\mu\gamma_*$, and    
	\begin{align}\label{eq:ODE_H_coeff}
		a_{i,3} &= \sigma^2/2, \quad
		a_{i,2} = a_i^* + \mu\sigma^2/2, \quad
		a_{i,1} = \mu\left(b + \sigma \kappa^*_i + r/\mu\right) - \lambda^*_i, 
	\end{align}
	for $a_i^* = a^*(x)$ and $\lambda^*_i=\lambda^*(x)$ for $x \in I_i^*$.
	
	%-------------------------------------------------%
	\subsubsection{The case of a quadratic running cost}
	\label{sec:parabolic_cost}
	%-------------------------------------------------%
	
	We now specialize further to a quadratic running cost.
	
	\begin{assumption}[Quadratic cost]\label{asm:parabolic_cost}
		$c(x) = x^2$.
	\end{assumption}
	
	Under parabolic cost and constant drift, the upper bound
	\eqref{eq:ergodic-value_upper_bound} can be made fully explicit. The proof of the next result is postponed to Appendix \ref{app:someproofs}.
	
	\begin{proposition}\label{pr:ergodic-value_upper_bound_parabolic-cost}
		Under Assumptions \ref{asm:constant_drift} and \ref{asm:parabolic_cost},
		and for a jump distribution $F$ such that $\Esp[Y^2] < \infty$, one has
		\begin{align*}
			\gamma \leq \min_{x_1<x_2} \Gamma(x_1,x_2)
			=
			(c_U+c_D)\left(|b|+\sigma\delta+\varepsilon r\Esp[|Y|]\right)
			+ 3\left((c_U+c_D)\left(\sigma^2+r(1+\varepsilon)\Esp[Y^2]\right)/4\right)^{2/3},
		\end{align*}
		where $\Gamma(x_1,x_2)$ is as in \eqref{eq:ergodic-value_upper_bound}.
	\end{proposition}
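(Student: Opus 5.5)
The plan is to combine Proposition~\ref{pr:ergodic-value_upper_bound} with an explicit minimization of $\Gamma(x_1,x_2)$ over $x_1<x_2$. Since $\gamma\le\Gamma(x_1,x_2)$ holds for every pair $x_1<x_2$, we have $\gamma\le\inf_{x_1<x_2}\Gamma(x_1,x_2)$. It therefore suffices to show that this infimum is attained and to compute its value.

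First, I will specialize $\Gamma$ using Assumptions~\ref{asm:constant_drift} and \ref{asm:parabolic_cost}. Then $\max_{x\in[x_1,x_2]}|b(x)|=|b|$ and $\max_{x\in[x_1,x_2]}c(x)=\max\{x_1^2,x_2^2\}$. Set
\[
C_0\defeq(c_U+c_D)\bigl(|b|+\sigma\delta+\varepsilon r\Esp[|Y|]\bigr),\qquad A\defeq(c_U+c_D)\bigl(\sigma^2+r(1+\varepsilon)\Esp[Y^2]\bigr).
\]
With these constants,
\[
\Gamma(x_1,x_2)=C_0+\max\{x_1^2,x_2^2\}+\frac{A}{x_2-x_1}.
\]
The hypothesis $\Esp[Y^2]<\infty$ guarantees $A<\infty$, and $\sigma>0$ gives $A>0$.

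Next, I will reparametrize by the width $L=x_2-x_1>0$ and the midpoint $m=(x_1+x_2)/2$. This gives $\max\{x_1^2,x_2^2\}=(|m|+L/2)^2$. For fixed $L$ this is minimized at $m=0$, i.e.\ on the symmetric interval $[-L/2,L/2]$, with value $L^2/4$. The problem thus reduces to minimizing the one-variable function $g(L)=L^2/4+A/L$ on $(0,\infty)$. This function is strictly convex and tends to $+\infty$ at both ends, so it has a unique minimizer. The first-order condition $g'(L)=L/2-A/L^2=0$ gives $L^*=(2A)^{1/3}$. Substituting back,
\[
g(L^*)=\tfrac14(2A)^{2/3}+\tfrac12(2A)^{2/3}=\tfrac34(2A)^{2/3}=3\,(A/4)^{2/3},
\]
where the last equality is the arithmetic check $\tfrac34\,2^{2/3}=3\cdot2^{-4/3}=3\cdot4^{-2/3}$. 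Adding $C_0$ yields exactly the stated expression. The minimum is attained at $(x_1,x_2)=(-L^*/2,L^*/2)$, so the minimum in the statement is indeed a minimum and not merely an infimum.

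There is no real obstacle here. The only point requiring care is the reduction to symmetric intervals, namely that $\max\{x_1^2,x_2^2\}\ge (x_2-x_1)^2/4$ with equality iff $x_1=-x_2$. This follows from $\max\{|x_1|,|x_2|\}\ge(|x_1|+|x_2|)/2\ge|x_2-x_1|/2$.
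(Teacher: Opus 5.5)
Your proposal is correct and follows essentially the same route as the paper. Both arguments specialize $\Gamma$ under constant drift and quadratic cost, reduce to symmetric intervals $[-L/2,L/2]$ via $\max\{x_1^2,x_2^2\}\ge L^2/4$, minimize $L^2/4+A/L$ at $L^*=(2A)^{1/3}$ to get $3(A/4)^{2/3}$, and conclude with Proposition~\ref{pr:ergodic-value_upper_bound}; your explicit justification of the symmetric reduction and of attainment of the minimum spells out steps the paper leaves implicit.
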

	
	When parabolic cost is combined with negative exponential jumps and constant drift,
	Proposition \ref{pr:ergodic-value_upper_bound_parabolic-cost} yields a sufficient
	condition for Regime 1 in \eqref{eq:regimes} that depends only on the model parameters
	$(b,\delta,r,\varepsilon,\sigma,c_U,c_D,\mu)$.
	
	\begin{lemma}\label{lm:Regime-1_parabolic_cost}
		Under Assumptions \ref{asm:constant_drift}, \ref{asm:neg-exp_jumps}, and \ref{asm:parabolic_cost},
		the following condition is feasible and implies that
		$\uline{x}_* < x_*^\kappa < x_*^\lambda < \oline{x}_*$:
		\begin{align}\label{eq:suf_cond_regime1_parabolic-cost}
			(c_U+c_D)\left(|b|+\sigma\delta+\varepsilon r/\mu\right)
			+ 3\left((c_U+c_D)\left(\sigma^2+2r(1+\varepsilon)/\mu^2\right)/4\right)^{2/3}
			< c_D(\delta\sigma + b + r/\mu).
		\end{align}
		Moreover, condition \eqref{eq:suf_cond_regime1_parabolic-cost} is satisfied if
		\begin{align}\label{eq:suf_to_suf_regime1_condition}
			b \geq 0,\quad 
			\varepsilon < c_D/(c_D+c_U),\quad 
			(r/\mu)^{1/3} > \max\{2\hat{K}_3/\hat{K}_1, (2\hat{K}_2/\hat{K}_1)^{1/3}\},
		\end{align}
		where
		\begin{align*}
			\hat{K}_1 \defeq c_D - (c_D+c_U)\varepsilon, \quad
			\hat{K}_2 \defeq c_U(b + \delta\sigma) + 3((c_D+c_U)\sigma^2/4)^{2/3},\quad
			\hat{K}_3 \defeq 3((c_D+c_U)/4)^{2/3}(2(1+\varepsilon)/\mu)^{2/3}.
		\end{align*}
	\end{lemma}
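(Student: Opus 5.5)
The plan is to reduce the statement to Proposition~\ref{pr:suf_condition_regime1}, combined with the explicit bound of Proposition~\ref{pr:ergodic-value_upper_bound_parabolic-cost}. The ordering $\uline{x}_*<x_*^\kappa<x_*^\lambda$ is already given by Propositions~\ref{pr:ambg-drift_bang-bang}--\ref{pr:ambg-intensity_bang-bang}. So the only thing to prove is $x_*^\lambda<\oline{x}_*$, which by \eqref{eq:xla_in_inaction_suf_cond} is equivalent to $(\Psi V_*)(\oline{x}_*)<0$.

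\textbf{Step 1: specialise Proposition~\ref{pr:suf_condition_regime1}.} Under Assumption~\ref{asm:neg-exp_jumps} we have $\Esp[Y]=-1/\mu$, $\Esp[|Y|]=1/\mu$ and $\Esp[Y^2]=2/\mu^2$. Under Assumption~\ref{asm:constant_drift} we may take $\uline{b}=b$. For the cost lower bound we take $\uline{c}=0$, which is valid since $c(x)=x^2\ge 0$; one could use $c(\oline{x}_*)>0$, since $\oline{x}_*>0$ by Step~4 of Theorem~\ref{thm:verifying_candidate}, but $0$ suffices. The right-hand side of Proposition~\ref{pr:suf_condition_regime1} then becomes $c_D(b+\sigma\delta+r/\mu)$.

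\textbf{Step 2: plug in the optimised bound.} Substituting these moments into the closed form of $\min_{x_1<x_2}\Gamma(x_1,x_2)$ from Proposition~\ref{pr:ergodic-value_upper_bound_parabolic-cost} gives exactly the left-hand side of \eqref{eq:suf_cond_regime1_parabolic-cost}. Because the minimum is attained, condition \eqref{eq:suf_cond_regime1_parabolic-cost} produces $x_1<x_2$ with $\Gamma(x_1,x_2)<c_D(b+\sigma\delta+r/\mu)$. Proposition~\ref{pr:suf_condition_regime1} then yields Regime~1.

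\textbf{Step 3: sufficient condition \eqref{eq:suf_to_suf_regime1_condition}.} Set $s=(r/\mu)^{1/3}$. With $b\ge0$ we have $|b|=b$. Moving $c_D\,r/\mu$ to the left, the linear terms in $r/\mu$ combine to $-\hat K_1 s^3$.

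For the power term, use subadditivity of $t\mapsto t^{2/3}$:
\[
\bigl(A+B\bigr)^{2/3}\le A^{2/3}+B^{2/3},\qquad A=\tfrac{(c_U+c_D)\sigma^2}{4},\quad B=\tfrac{(c_U+c_D)2r(1+\varepsilon)}{4\mu^2}.
\]
This gives $3(\cdot)^{2/3}\le 3A^{2/3}+\hat K_3 s^2$, since $B^{2/3}=((c_U+c_D)/4)^{2/3}(2(1+\varepsilon)/\mu)^{2/3}(r/\mu)^{2/3}$.

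The remaining constant terms are $(c_U+c_D)(b+\sigma\delta)-c_D(b+\sigma\delta)=c_U(b+\delta\sigma)$. Hence the left side minus the right side is at most
\[
\hat K_2+\hat K_3 s^2-\hat K_1 s^3 .
\]
This is negative whenever $\hat K_3 s^2<\tfrac12\hat K_1 s^3$ and $\hat K_2<\tfrac12\hat K_1 s^3$, that is, whenever $s>2\hat K_3/\hat K_1$ and $s>(2\hat K_2/\hat K_1)^{1/3}$. Here $\varepsilon<c_D/(c_D+c_U)$ guarantees $\hat K_1>0$. Feasibility of \eqref{eq:suf_cond_regime1_parabolic-cost} follows because the sets of parameters satisfying \eqref{eq:suf_to_suf_regime1_condition} are nonempty: take $b=0$, $\varepsilon=0$, and $r$ large.

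\textbf{Main obstacle.} The delicate point is Step~1: one must make sure that Proposition~\ref{pr:suf_condition_regime1} may be applied with the bound $\gamma\le\Gamma$ evaluated at the optimal $\gamma=\gamma_*$. This requires $(V_*,\gamma_*)$ to solve the free-boundary problem, so that Theorem~\ref{thm:verification_theorem} identifies $\gamma_*$ with $\gamma$. It also requires checking that the lower bounds $\uline c$ and $\uline b$ may be chosen uniformly, without knowing $\oline{x}_*$; this is why the parameter-free choice $\uline{c}=0$ is used. The algebra in Step~3 is routine.
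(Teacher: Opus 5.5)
Your proposal is correct and follows the paper's proof essentially step for step. You reduce to Proposition~\ref{pr:suf_condition_regime1} with $\uline{c}=0$ and $\uline{b}=b$, insert the exponential moments into the closed-form bound of Proposition~\ref{pr:ergodic-value_upper_bound_parabolic-cost}, and then use subadditivity of $t\mapsto t^{2/3}$ to reduce to $\hat K_1 s^3>\hat K_2+\hat K_3 s^2$, which you split into two halves. Your version is slightly more careful than the paper's in three respects: you keep the inequalities strict where the paper writes $\geq$, you argue feasibility explicitly, and you flag the identification of $\gamma_*$ with the control value via Theorem~\ref{thm:verification_theorem}.
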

	The proof of Lemma \ref{lm:Regime-1_parabolic_cost} can be found in Appendix \ref{app:someproofs}.
	
	\begin{remark}
		Condition \eqref{eq:suf_to_suf_regime1_condition}, which is sufficient for
		\eqref{eq:suf_cond_regime1_parabolic-cost}, can be met quite easily.
		For example, after fixing all parameters except $r$, once the first two inequalities
		in \eqref{eq:suf_to_suf_regime1_condition} hold, one can choose $r$ sufficiently large
		so that the third inequality is also satisfied.
	\end{remark}
	
	Under Assumption \ref{asm:parabolic_cost}, equation \eqref{eq:ODE_H_const_coeff}
	admits an explicit solution.
	Indeed, since
	\[
	\Delta_i = a_{i,2}^2 - 4a_{i,3}a_{i,1}
	= \left(a_i^* - \frac{1}{2}\mu\sigma^2\right)^2 + 2\sigma^2\lambda_i^* > 0,
	\]
	the ODE \eqref{eq:ODE_H_const_coeff} has the solution
	(see, e.g., Sections 0.4.1-1 and 0.4.1-2 in \cite{PolyaninZaitsev2003HandbookODE})
	\begin{align}\label{eq:ODE_H_sol}
		H_i(x) &=  \mathbf{c}_{i}^{-} e^{-\rho_i^- x} + \mathbf{c}_{i}^{+} e^{-\rho_i^+ x} + p_i(x),
	\end{align} 
	where
	\begin{align}\label{eq:ODE_H_exponents}
		\rho_i^+ = \frac{a_{i,2}+\sqrt{\Delta_i}}{2 a_{i,3}}, \qquad
		\rho_i^- = \frac{a_{i,2}-\sqrt{\Delta_i}}{2 a_{i,3}},
	\end{align}
	and $p_i(x)$ is a polynomial of degree at most three.
	For the sake of exposition we first consider the nondegenerate case in which
	$a_{i,1}\neq 0$.
	
	\begin{assumption}[$a_{i,1}$ does not vanish]\label{asm:a_i1_not_zero}
		$a_{i,1} \neq 0$ for $i = 1,2,3$.
	\end{assumption}
	
	Under Assumption \ref{asm:a_i1_not_zero},
	the polynomial term takes the form
	\begin{align}\label{eq:polynomial_term}
		p_i(x) &= c_{i,2} x^2 + c_{i,1} x + c_{i,0}, \qquad
		p_i'(x) = 2 c_{i,2} x + c_{i,1},
	\end{align}
	with
	\begin{align}\label{eq:poly_coeffs}
		c_{i,2} &= -\frac{\mu}{a_{i,1}}, \qquad
		c_{i,1} = \frac{2(\mu a_{i,2}-a_{i,1})}{a_{i,1}^2}, \qquad
		c_{i,0} = \frac{\mu\gamma\,a_{i,1}^2+2a_{i,1}a_{i,2} + 2\mu a_{i,1}a_{i,3}-2\mu a_{i,2}^2}{a_{i,1}^3}.
	\end{align}
	
	\begin{remark}\label{rmk:a_i1_zero}
		If $a_{i,1}=0$, the solution method is unchanged.
		In that case one necessarily has $a_{i,2}\neq 0$, and the polynomial
		term becomes cubic:
		\begin{align*}
			p_i(x) &= c_{i,3} x^3 + c_{i,2} x^2 + c_{i,1} x,
		\end{align*}
		with
		\[
		c_{i,3} = -\frac{b_2}{3a_{i,2}}, \qquad
		c_{i,2} = -\frac{6 a_{i,3} c_{i,3} + 2b_1}{2 a_{i,2}}, \qquad
		c_{i,1} = -\frac{2 a_{i,3} c_{i,2} + b_0}{a_{i,2}}.
		\]
	\end{remark}
	
	Under Assumptions \ref{asm:neg-exp_jumps} and \ref{asm:parabolic_cost},
	we therefore propose a candidate pair $(V_*,\gamma_*)$ such that
	$V_*'=H_*$ and
	\begin{align}\label{eq:gamma_opt_example}
		\gamma_* \defeq 
		%-c_U(b - \delta\sigma + r/\mu) + c_U(1+\varepsilon)r/\mu + \uline{x}_*^2 =
		c_U(\delta\sigma - b + \varepsilon r/\mu) + \uline{x}_*^2,
	\end{align}
	with $H_*$ solving the free-boundary problem
	\begin{subequations}
		\begin{empheq}[left=\empheqlbrace]{alignat=2}
			0 &= a_{i,3}H_*''(x) + a_{i,2} H_*'(x) + a_{i,1}H_*(x) + b_2x^2 + 2b_1x + b_0,
			\quad\quad &x \in I_i^*,
			\label{eq:FBP_candidate_example_continuation} \\
			H_*(x) &= -c_U, \quad\quad &x \leq \uline{x}_*, \label{eq:FBP_candidate_example_lower_stopping} \\
			H_*(x) &= c_D, \quad\quad &x \geq \oline{x}_*, \label{eq:FBP_candidate_example_upper_stopping} \\
			H_*(x_*^\kappa) &= 0,\quad\quad \label{eq:FBP_example_candidate_xk} \\
			(\intOp H_*)(x_*^\lambda) &= 0,\quad\quad \label{eq:FBP_example_candidate_xla} \\
			H_1(\uline{x}_*) &= -c_U,\; H_1(x_*^\kappa) = H_2(x_*^\kappa),\; H_2(x_*^\lambda) = H_3(x_*^\lambda),\; H_3(\oline{x}_*) = c_D,
			\quad\quad &\text{(instantaneous stop)} \label{eq:FBP_example_candidate_instantaneous_stop} \\
			H_1'(\uline{x}_*) &= 0,\; H_1'(x_*^\kappa) = H_2'(x_*^\kappa),\; H_2'(x_*^\lambda) = H_3'(x_*^\lambda),\; H_3'(\oline{x}_*) = 0.
			\quad\quad &\text{(smooth fit)} \label{FBP_candidte_example_smooth-fit}
		\end{empheq}
	\end{subequations}
	Theorem \ref{thm:verifying_candidate} then implies that such
	$V_*$ and $\gamma_*$, together with the reflecting barriers and the ambiguity thresholds
	$\uline{x}_* < x_*^\kappa < x_*^\lambda < \oline{x}_*$,
	solve \eqref{eq:ergodic_singular_control_problem}.
	Note that we do not explicitly impose the higher smoothness in
	\eqref{eq:FBP_candidate_C4}, since it follows automatically from
	\eqref{eq:ODE_H_sol}.
	
	%-------------------------------------------------%
	\subsubsection{Stable representation of the candidate solution}
	%-------------------------------------------------%
	
	We now turn to the numerical implementation of the boundary-value problem
	\eqref{eq:FBP_candidate_example_continuation}--\eqref{FBP_candidte_example_smooth-fit}.
	A direct use of the representation \eqref{eq:ODE_H_sol} may become numerically unstable because of the rapidly growing or decaying exponential terms.
	To avoid overflow and loss of precision, we introduce an adaptive anchoring system.
	
	Specifically, we consider the scaled coefficients
	$\mathbf{u}_{i}^{\pm} \defeq \mathbf{c}_{i}^{\pm}e^{-\rho_i^\pm x_{i,\pm}^{\mathrm{a}}}$,
	where the anchors are chosen as
	\begin{align}\label{eq:anchors}
		x_{1,\pm}^\mathrm{a} 
		&=
		\begin{cases}
			\uline{x}    ,& \text{if } \rho_1^\pm \geq 0 \\
			x^\kappa  ,& \text{if } \rho_1^\pm < 0
		\end{cases} ,
		\quad
		x_{2,\pm}^\mathrm{a} 
		=
		\begin{cases}
			x^\kappa    ,& \text{if } \rho_2^\pm \geq 0 \\
			x^\lambda  ,& \text{if } \rho_2^\pm < 0
		\end{cases} ,
		\quad
		x_{3,\pm}^\mathrm{a} 
		=
		\begin{cases}
			x^\lambda    ,& \text{if } \rho_3^\pm \geq 0 \\
			\oline{x}  ,& \text{if } \rho_3^\pm < 0
		\end{cases}.
	\end{align}
	
	To make explicit the dependence of $H_i$ on $\gamma$ through $c_{i,0}$,
	we also define the polynomial
	$q_i(x) = p_i(x) - \frac{\mu}{a_{i,1}}\gamma$.
	Then \eqref{eq:ODE_H_sol} becomes
	\begin{align}
		H_i(x) &= 
		\mathbf{u}_{i}^{-} e^{-\rho_i^- (x - x_{i,-}^\mathrm{a})} 
		+ \mathbf{u}_{i}^{+} e^{-\rho_i^+ (x - x_{i,+}^\mathrm{a})}
		+ \frac{\mu}{a_{i,1}}\gamma + q_i(x), \label{eq:ODE_H_sol_gamma} \\
		H_i'(x) &= 
		-\rho_i^-\mathbf{u}_{i}^{-} e^{-\rho_i^- (x - x_{i,-}^\mathrm{a})} 
		-\rho_i^+\mathbf{u}_{i}^{+} e^{-\rho_i^+ (x - x_{i,+}^\mathrm{a})} + q_i'(x), \label{eq:ODE_Hprime_sol_gamma}
	\end{align}
	where $q_i$, which is independent of $\gamma$, has the form
	\begin{align*}
		q_i(x) &= c_{i,2} x^2 + c_{i,1} x + d_{i,0}, \qquad
		q_i'(x) = p_i'(x) = 2 c_{i,2} x + c_{i,1},
	\end{align*}
	with
	\begin{align*}
		d_{i,0} = c_{i,0} - \frac{\mu}{a_{i,1}}\gamma
		= \frac{2a_{i,1}a_{i,2} + 2\mu a_{i,1}a_{i,3}-2\mu a_{i,2}^2}{a_{i,1}^3}.
	\end{align*}
	
	%-------------------------------------------------%
	\subsubsection{Solving the boundary-value problem}
	\label{sec:solving_cauchy}
	%-------------------------------------------------%
	
	In this part we take the reflecting barriers and ambiguity thresholds as given,
	and solve the boundary-value problem
	\eqref{eq:FBP_candidate_example_continuation}--\eqref{eq:FBP_example_candidate_instantaneous_stop}.
	Thus, for fixed boundaries $(\uline{x},x^\kappa,x^\lambda,\oline{x})$,
	we determine $\gamma$ and the coefficients $\mathbf{u}_{i}^{\pm}$,
	leaving aside the smooth-fit conditions for the moment.
	
	%-----%
	\paragraph{Finding the value $\gamma$.}\label{par:solving_gamma}\ \\
	\vspace{-0.3cm}
	%-----%
	
	\noindent Without imposing the smooth-fit conditions \eqref{FBP_candidte_example_smooth-fit},
	the value of $\gamma$ is not yet the optimal one in \eqref{eq:gamma_opt_example}.
	Using only the continuity of $H$ from
	\eqref{eq:FBP_example_candidate_instantaneous_stop}, taking
	$x\downarrow \uline{x}$ in \eqref{eq:int_op_V_substitution}, and using
	Corollary \ref{cor:int_op_neg-exp-jumps} together with
	$H(\uline{x}) = (\intOp H)(\uline{x}) = -c_U$ from
	\eqref{eq:FBP_candidate_example_lower_stopping}, we obtain
	\begin{align}\label{eq:gamma_stable}
		\gamma &= \gamma(\uline{x}, \mathbf{u}_{1}^{\pm}) = (\InfGen^{\kappa,\lambda} V)(\uline{x}) + c(\uline{x}) 
		= \frac{1}{2}\sigma^2 H_1'(\uline{x}) + \gamma_* \\
		&= \frac{1}{2}\sigma^2 \lrp{-\rho_1^-\mathbf{u}_{1}^{-} e^{-\rho_1^- (\uline{x} - x_{1,-}^\mathrm{a})} - \rho_1^+\mathbf{u}_{1}^{+} e^{-\rho_1^+ (\uline{x} - x_{1,+}^\mathrm{a})} + q_1'(\uline{x})} + \gamma_* . \nonumber
	\end{align}
	
	%-----%
	\paragraph{Finding the coefficients $\mathbf{u}_{1}^{\pm}$.}
	\label{par:solving_c1_coeff}\ \\
	\vspace{-0.3cm}
	%-----%
	
	\noindent From \eqref{eq:FBP_example_candidate_instantaneous_stop} we obtain
	\begin{align*}
		-c_U &= H_1(\uline{x}) = \mathbf{u}_{1}^{-} e^{-\rho_1^- (\uline{x} -  x_{1,-}^\mathrm{a})} + \mathbf{u}_{1}^{+} e^{-\rho_1^+ (\uline{x} - x_{1,+}^\mathrm{a})} + \frac{\mu}{a_{1,1}}\gamma(\uline{x},\mathbf{u}_{1}^{\pm}) + q_1(\uline{x}) \\
		&= \mathbf{u}_{1}^{-} e^{-\rho_1^- (\uline{x} - x_{1,-}^\mathrm{a})} + \mathbf{u}_{1}^{+} e^{-\rho_1^+ (\uline{x} - x_{1,+}^\mathrm{a})} 
		- \frac{\mu}{a_{1,1}}\frac{\sigma^2}{2} \lrp{\rho_1^-\mathbf{u}_{1}^{-} e^{-\rho_1^- (\uline{x} - x_{1,-}^\mathrm{a})} + \rho_1^+\mathbf{u}_{1}^{+} e^{-\rho_1^+ (\uline{x} - x_{1,+}^\mathrm{a})} - q_1'(\uline{x})} + \frac{\mu}{a_{1,1}}\gamma_* + q_1(\uline{x}).
	\end{align*}
	That is,
	\[
	\mathbf{u}_{1}^{-} m_{1,-}^{(1)} 
	+ \mathbf{u}_{1}^{+} m_{1,+}^{(1)}
	= \beta_{1}^{(1)}
	\]
	for
	\begin{align*}
		m_{1,-}^{(1)} &= e^{-\rho_1^- (\uline{x} - x_{1,-}^\mathrm{a})}\lrp{1 - \frac{\mu}{a_{1,1}}\frac{\sigma^2}{2}\rho_1^-}, \quad 
		m_{1,+}^{(1)} = e^{-\rho_1^+ (\uline{x} - x_{1,+}^\mathrm{a})}\lrp{1 - \frac{\mu}{a_{1,1}}\frac{\sigma^2}{2}\rho_1^+}, \\
		\beta_{1}^{(1)} &= -c_U - q_1(\uline{x}) - \frac{\mu}{a_{1,1}}\lrp{\frac{\sigma^2}{2}q_1'(\uline{x}) + \gamma_*}.
	\end{align*}
	
	Likewise, from \eqref{eq:FBP_example_candidate_xk} we get
	\begin{align*}
		0 &= H_1(x^\kappa) = \mathbf{u}_{1}^{-} e^{-\rho_1^- (x^\kappa - x_{1,-}^\mathrm{a})} + \mathbf{u}_{1}^{+} e^{-\rho_1^+ (x^\kappa - x_{1,+}^\mathrm{a})} + \frac{\mu}{a_{1,1}}\gamma(\uline{x},\mathbf{u}_{1}^{\pm}) + q_1(x^\kappa) \\
		&= \mathbf{u}_{1}^{-} e^{-\rho_1^- (x^\kappa - x_{1,-}^\mathrm{a})} + \mathbf{u}_{1}^{+} e^{-\rho_1^+ (x^\kappa - x_{1,+}^\mathrm{a})} 
		- \frac{\mu}{a_{1,1}}\frac{1}{2}\sigma^2 \lrp{\rho_1^-\mathbf{u}_{1}^{-} e^{-\rho_1^- (\uline{x} - x_{1,-}^\mathrm{a})} + \rho_1^+\mathbf{u}_{1}^{+} e^{-\rho_1^+ (\uline{x} - x_{1,+}^\mathrm{a})} - q_1'(\uline{x})} + \frac{\mu}{a_{1,1}}\gamma_* + q_1(x^\kappa),
	\end{align*}
	which yields
	\[
	\mathbf{u}_{1}^{-} m_{2,-}^{(1)}  
	+ \mathbf{u}_{1}^{+} m_{2,+}^{(1)}
	= \beta_{2}^{(1)}
	\]
	for 
	\begin{align*}
		m_{2,-}^{(1)} &= e^{-\rho_1^- (x^\kappa - x_{1,-}^\mathrm{a})} - \frac{\mu}{a_{1,1}}\frac{\sigma^2}{2}\rho_1^-e^{-\rho_1^- (\uline{x} - x_{1,-}^\mathrm{a})}, \\
		m_{2,+}^{(1)} &= e^{-\rho_1^+ (x^\kappa - x_{1,+}^\mathrm{a})} - \frac{\mu}{a_{1,1}}\frac{\sigma^2}{2}\rho_1^+e^{-\rho_1^+ (\uline{x} - x_{1,+}^\mathrm{a})}, \\
		\beta_{2}^{(1)} &= - q_1(x^\kappa) - \frac{\mu}{a_{1,1}}\lrp{\frac{\sigma^2}{2}q_1'(\uline{x}) + \gamma_*}.
	\end{align*}
	
	Therefore, $\mathbf{u}_{1}^{\pm}$ (equivalently $\mathbf{c}_{1}^{\pm}$) solve
	\begin{align}\label{eq:linear_system_1_stable}
		\begin{pmatrix}
			m_{1,-}^{(1)} & m_{1,+}^{(1)} \\
			m_{2,-}^{(1)} & m_{2,+}^{(1)}
		\end{pmatrix}
		\begin{pmatrix}
			\mathbf{u}_{1}^{-} \\ \mathbf{u}_{1}^{+}
		\end{pmatrix} 
		=
		\begin{pmatrix}
			\beta_{1}^{(1)} \\ \beta_{2}^{(1)}
		\end{pmatrix}\quad
		\lrp{\text{resp.} 
			\begin{pmatrix}
				\alpha_{1,-}^{(1)} & \alpha_{1,+}^{(1)} \\
				\alpha_{2,-}^{(1)} & \alpha_{2,+}^{(1)}
			\end{pmatrix}
			\begin{pmatrix}
				\mathbf{c}_{1}^{-} \\ \mathbf{c}_{1}^{+}
			\end{pmatrix} 
			=
			\begin{pmatrix}
				\beta_{1}^{(1)} \\ \beta_{2}^{(1)}
		\end{pmatrix}},
	\end{align}
	where $\alpha_{i,\pm}^{(1)} = e^{-\rho_1^\pm x_{1,\pm}^\mathrm{a}}m_{i,\pm}^{(1)}$, $i=1,2$.
	
	%-----%
	\paragraph{Finding the coefficients $\mathbf{u}_{2}^{\pm}$.}
	\label{par:solving_c2_coeff}\ \\
	\vspace{-0.3cm}
	%-----%
	
	\noindent Using \eqref{eq:FBP_example_candidate_instantaneous_stop} one obtains
	\begin{align}\label{eq:C0_at_xk}
		0 = H_1(x^\kappa) = H_2(x^\kappa) &=
		\mathbf{u}_{2}^{-} e^{-\rho_2^- (x^\kappa - x_{2,-}^\mathrm{a})} + \mathbf{u}_{2}^{+} e^{-\rho_2^+ (x^\kappa - x_{2,+}^\mathrm{a})} + \frac{\mu}{a_{2,1}}\gamma(\uline{x},\mathbf{u}_{1}^{\pm}) + q_2(x^\kappa).
	\end{align}
	Hence,
	\[
	\mathbf{u}_{2}^{-} m_{1,-}^{(2)} + \mathbf{u}_{2}^{+} m_{1,+}^{(2)} = \beta_1^{(2)}
	\]
	for
	\begin{align*}
		m_{1,-}^{(2)} = e^{-\rho_2^- (x^\kappa - x_{2,-}^\mathrm{a})},\quad
		m_{1,+}^{(2)} = e^{-\rho_2^+ (x^\kappa - x_{2,+}^\mathrm{a})},\quad  
		\beta_1^{(2)} = - \frac{\mu}{a_{2,1}}\gamma(\uline{x},\mathbf{u}_{1}^{\pm}) - q_2(x^\kappa).
	\end{align*}
	
	Using next \eqref{eq:FBP_candidate_xla}, we obtain
	\begin{align}\label{eq:xLa_def_stable} 
		0 
		= (\intOp H)(x^\lambda)
		= \int_{-\infty}^0 H(x^\lambda+y) \mu e^{\mu y}\,\rmd y 
		= \sum_{i=0}^2 \mathcal{I}_i,
	\end{align}
	where
	\begin{align*}
		\mathcal{I}_0 
		&\defeq -c_U \int_{-\infty}^{\uline{x}-x^\lambda} \mu e^{\mu y}\,\rmd y, \\
		\mathcal{I}_1 
		&\defeq \int_{\uline{x}-x^\lambda}^{x^\kappa-x^\lambda} H_1(x^\lambda+y) \mu e^{\mu y}\,\rmd y, \\
		\mathcal{I}_2
		&\defeq \int_{x^\kappa-x^\lambda}^{0} H_2(x^\lambda+y) \mu e^{\mu y}\,\rmd y .
	\end{align*}
	
	We now reduce these terms to explicit expressions.
	
	\begin{itemize}[label=$\triangleright$]
		\item \emph{$\mathcal{I}_0$}: One immediately obtains
		\begin{align*}
			\mathcal{I}_0 
			= -c_U \int_{-\infty}^{\uline{x}-x^\lambda} \mu e^{\mu y}\,\rmd y 
			= -c_U e^{\mu(\uline{x}-x^\lambda)}.
		\end{align*}
		
		\item \emph{$\mathcal{I}_1$}: We write
		\begin{align*}
			\mathcal{I}_1 
			&= \int_{\uline{x}-x^\lambda}^{x^\kappa-x^\lambda} H_1(x^\lambda+y) \mu e^{\mu y}\,\rmd y 
			= \int_{\uline{x}}^{x^\kappa} H_1(y) \mu e^{\mu (y - x^\lambda)}\,\rmd y \\
			&= 
			\sum_{\eta\in\{-,+\}}\int_{\uline{x}}^{x^\kappa} \mu
			\mathbf{u}_{1}^{\eta} e^{-\rho_1^\eta (y - x_{1,\eta}^\mathrm{a})} e^{\mu (y - x^\lambda)}\,\rmd y
			+ \int_{\uline{x}}^{x^\kappa} \mu
			\lrp{\frac{\mu}{a_{1,1}}\gamma(\uline{x},\mathbf{u}_{1}^{\pm}) + q_1(y)} e^{\mu (y - x^\lambda)}\,\rmd y.
		\end{align*}
		The first term simplifies to
		\details{
			\begin{align*}
				\sum_{\eta\in\{-,+\}}\int_{\uline{x}}^{x^\kappa} \mu
				\mathbf{u}_{1}^{\eta} e^{-\rho_1^\eta (y - x_{1,\eta}^\mathrm{a})} e^{\mu (y - x^\lambda)}\,\rmd y
				&=
				\sum_{\eta\in\{-,+\}} 
				\mu\mathbf{u}_{1}^{\eta}
				\frac{
					e^{-\mu(x^\lambda-x^\kappa)}e^{-\rho_1^\eta (x^\kappa-x_{1,\eta}^\mathrm{a})} - 
					e^{-\mu(x^\lambda-\uline{x})}
					e^{-\rho_1^\eta(\uline{x}-x_{1,\eta}^\mathrm{a})}
				}{
					\mu -  \rho_1^\eta
				},
			\end{align*}
		}{
			\begin{align*}
				\sum_{\eta\in\{-,+\}}\int_{\uline{x}}^{x^\kappa} \mu
				\mathbf{u}_{1}^{\eta} e^{-\rho_1^\eta (y - x_{1,\eta}^\mathrm{a})} e^{\mu (y - x^\lambda)}\,\rmd y
				&=
				\sum_{\eta\in\{-,+\}} 
				\mu\mathbf{u}_{1}^{\eta}
				\frac{
					e^{-\mu(x^\lambda-x^\kappa)}e^{-\rho_1^\eta (x^\kappa-x_{1,\eta}^\mathrm{a})} - 
					e^{-\mu(x^\lambda-\uline{x})}
					e^{-\rho_1^\eta(\uline{x}-x_{1,\eta}^\mathrm{a})}
				}{
					\mu -  \rho_1^\eta
				}.
			\end{align*}
		}
		The second term, after repeated integration by parts, becomes
		\begin{align*}
			\int_{\uline{x}}^{x^\kappa} 
			\mu \lrp{\frac{\mu}{a_{1,1}}\gamma(\uline{x},\mathbf{u}_{1}^{\pm}) + q_1(y)} e^{\mu (y - x^\lambda)}\,\rmd y 
			&=
			\mu e^{-\mu x^\lambda}\lrp{e^{\mu x^\kappa}Q_1(x^\kappa) - e^{\mu \uline{x}}Q_1(\uline{x})},
		\end{align*}
		where
		\begin{align*}
			Q_1(y) &=
			c_{1,2}\lrp{\frac{y^2}{\mu}-\frac{2y}{\mu^2}+\frac{2}{\mu^3}}
			+ c_{1,1}\lrp{\frac{y}{\mu}-\frac{1}{\mu^2}}
			+ \lrp{d_{1,0} + \frac{\mu}{a_{1,1}}\gamma(\uline{x},\mathbf{u}_{1}^{\pm})}\,\frac{1}{\mu}.    
		\end{align*}
		
		\item \emph{$\mathcal{I}_2$}: Similarly,
		\begin{align*}
			\mathcal{I}_2 
			&=
			\sum_{\eta\in\{-,+\}}\int_{x^\kappa}^{x^\lambda} \mu \mathbf{u}_{2}^{\eta} e^{-\rho_2^\eta (y - x_{2,\eta}^\mathrm{a})} e^{\mu (y - x^\lambda)}\,\rmd y
			+ \int_{x^\kappa}^{x^\lambda} \mu
			\lrp{\frac{\mu}{a_{2,1}}\gamma(\uline{x},\mathbf{u}_{1}^{\pm}) + q_2(y)} e^{\mu (y - x^\lambda)}\,\rmd y.
		\end{align*}
		The exponential part becomes
		\details{
			\begin{align*}
				\sum_{\eta\in\{-,+\}}\int_{x^\kappa}^{x^\lambda} 
				\mu\mathbf{u}_{2}^{\eta} e^{-\rho_2^\eta (y - x_{2,\eta}^\mathrm{a})} e^{\mu (y - x^\lambda)}\,\rmd y
				&=
				\sum_{\eta\in\{-,+\}} 
				\mu\mathbf{u}_{2}^{\eta}
				\frac{
					e^{-\rho_2^\eta (x^\lambda-x_{2,\eta}^\mathrm{a})} - 
					e^{-\mu(x^\lambda-x^\kappa)}e^{-\rho_2^\eta (x^\kappa -x_{2,\eta}^\mathrm{a})}
				}{
					\mu - \rho_2^\eta
				},
			\end{align*}
		}{
			\begin{align*}
				\sum_{\eta\in\{-,+\}}\int_{x^\kappa}^{x^\lambda} 
				\mu\mathbf{u}_{2}^{\eta} e^{-\rho_2^\eta (y - x_{2,\eta}^\mathrm{a})} e^{\mu (y - x^\lambda)}\,\rmd y
				&=
				\sum_{\eta\in\{-,+\}} 
				\mu\mathbf{u}_{2}^{\eta}
				\frac{
					e^{-\rho_2^\eta (x^\lambda-x_{2,\eta}^\mathrm{a})} - 
					e^{-\mu(x^\lambda-x^\kappa)}e^{-\rho_2^\eta (x^\kappa -x_{2,\eta}^\mathrm{a})}
				}{
					\mu - \rho_2^\eta
				}.
			\end{align*}
		}
		The polynomial part is
		\begin{align*}
			\int_{x^\kappa}^{x^\lambda} \mu
			\lrp{\frac{\mu}{a_{2,1}}\gamma(\uline{x},\mathbf{u}_{1}^{\pm}) + q_2(y)} e^{\mu (y - x^\lambda)}\,\rmd y
			&=
			\mu e^{-\mu x^\lambda}\lrp{e^{\mu x^\lambda}Q_2(x^\lambda) - e^{\mu x^\kappa}Q_2(x^\kappa)},
		\end{align*}
		where
		\begin{align*}
			Q_2(y) &=
			c_{2,2}\lrp{\frac{y^2}{\mu}-\frac{2y}{\mu^2}+\frac{2}{\mu^3}}
			+ c_{2,1}\lrp{\frac{y}{\mu}-\frac{1}{\mu^2}}
			+ \lrp{d_{2,0} + \frac{\mu}{a_{2,1}}\gamma(\uline{x},\mathbf{u}_{1}^{\pm})}\,\frac{1}{\mu}.    
		\end{align*}
	\end{itemize}
	
	Combining the three components, \eqref{eq:xLa_def_stable} becomes
	\begin{align}
		0 = (\intOp H)(x^\lambda)
		&= -c_U e^{\mu(\uline{x} - x^\lambda)} \label{eq:xLa_def_explicit_stable} 
		\\
		&\hspace{0.4cm} + 
		\sum_{\eta\in\{-,+\}} 
		\mu\mathbf{u}_{1}^{\eta}
		\frac{
			e^{-\mu(x^\lambda-x^\kappa)}e^{-\rho_1^\eta (x^\kappa-x_{1,\eta}^\mathrm{a})} - 
			e^{-\mu(x^\lambda-\uline{x})}
			e^{-\rho_1^\eta(\uline{x}-x_{1,\eta}^\mathrm{a})}
		}{
			\mu -  \rho_1^\eta
		} \nonumber 
		\\
		&\hspace{0.4cm} 
		+ 
		\sum_{\eta\in\{-,+\}} 
		\mu\mathbf{u}_{2}^{\eta}
		\frac{
			e^{-\rho_2^\eta (x^\lambda-x_{2,\eta}^\mathrm{a})} - 
			e^{-\mu(x^\lambda-x^\kappa)}e^{-\rho_2^\eta (x^\kappa -x_{2,\eta}^\mathrm{a})}
		}{
			\mu - \rho_2^\eta
		} \nonumber 
		\\
		&\hspace{0.4cm} 
		+ \mu e^{-\mu x^\lambda}\lrp{
			(e^{\mu x^\kappa}Q_1(x^\kappa) - e^{\mu \uline{x}}Q_1(\uline{x}))
			+
			(e^{\mu x^\lambda}Q_2(x^\lambda) - e^{\mu x^\kappa}Q_2(x^\kappa))
		}. \nonumber
	\end{align}
	
	We are implicitly assuming $\mu \neq \rho_i^\eta$ in \eqref{eq:xLa_def_explicit_stable}.
	If this fails, the corresponding formula is obtained by taking the limit
	as $\mu\to \rho_i^\eta$.
	
	Therefore,
	\[
	\mathbf{u}_{2}^{-}m_{2,-}^{(2)}
	+
	\mathbf{u}_{2}^{+}m_{2,+}^{(2)} 
	= 
	\beta_2^{(2)}
	\]
	with
	\begin{align}\label{eq:coeff2_linear_system_2_stable}
		m_{2,-}^{(2)} 
		&= 
		\mu\frac{
			e^{-\rho_2^- (x^\lambda-x_{2,-}^\mathrm{a})} - 
			e^{-\mu(x^\lambda-x^\kappa)}e^{-\rho_2^- (x^\kappa -x_{2,-}^\mathrm{a})}
		}{
			\mu - \rho_2^-
		}, \quad
		m_{2,+}^{(2)} 
		= 
		\mu\frac{
			e^{-\rho_2^+ (x^\lambda-x_{2,+}^\mathrm{a})} - 
			e^{-\mu(x^\lambda-x^\kappa)}e^{-\rho_2^+ (x^\kappa -x_{2,+}^\mathrm{a})}
		}{
			\mu - \rho_2^+
		},
	\end{align}
	and
	\begin{align}\label{eq:linear_term2_linear_system_2_stable}
		\beta_2^{(2)} 
		&= 
		c_U e^{\mu(\uline{x} - x^\lambda)} 
		- 
		\sum_{\eta\in\{-,+\}} 
		\mu\mathbf{u}_{1}^{\eta}e^{-\mu(x^\lambda-x^\kappa)}
		\frac{
			e^{-\rho_1^\eta (x^\kappa-x_{1,\eta}^\mathrm{a})} - 
			e^{-\mu(x^\kappa-\uline{x})}
			e^{-\rho_1^\eta(\uline{x}-x_{1,\eta}^\mathrm{a})}
		}{
			\mu -  \rho_1^\eta
		} \\
		&\hspace{0.4cm} - 
		\mu e^{-\mu x^\lambda}\lrp{
			\lrp{e^{\mu x^\kappa}Q_1(x^\kappa) - e^{\mu \uline{x}}Q_1(\uline{x})}
			+
			\lrp{e^{\mu x^\lambda}Q_2(x^\lambda) - e^{\mu x^\kappa}Q_2(x^\kappa)}
		}. \nonumber
	\end{align}
	
	In matrix form,
	\begin{align}\label{eq:linear_system_2_stable}
		\begin{pmatrix}
			m_{1,-}^{(2)} & m_{1,+}^{(2)} \\
			m_{2,-}^{(2)} & m_{2,+}^{(2)}
		\end{pmatrix}
		\begin{pmatrix}
			\mathbf{u}_{2}^{-} \\ \mathbf{u}_{2}^{+}
		\end{pmatrix} 
		=
		\begin{pmatrix}
			\beta_{1}^{(2)} \\ \beta_{2}^{(2)}
		\end{pmatrix}\quad 
		\lrp{\text{ resp. } 
			\begin{pmatrix}
				\alpha_{1,-}^{(2)} & \alpha_{1,+}^{(2)} \\
				\alpha_{2,-}^{(2)} & \alpha_{2,+}^{(2)}
			\end{pmatrix}
			\begin{pmatrix}
				\mathbf{c}_{2}^{-} \\ \mathbf{c}_{2}^{+}
			\end{pmatrix} 
			=
			\begin{pmatrix}
				\beta_{1}^{(2)} \\ \beta_{2}^{(2)}
		\end{pmatrix}},
	\end{align}
	where $\alpha_{i,\pm}^{(2)} = e^{-\rho_2^\pm x_{2,\pm}^\mathrm{a}}m_{i,\pm}^{(2)}$,
	$i=1,2$.
	
	%-----%
	\paragraph{Finding the coefficients $\mathbf{u}_{3}^{\pm}$.}
	\label{par:solving_c3_coeff}\ \\
	\vspace{-0.3cm}
	%-----%
	
	\noindent Using \eqref{eq:FBP_candidate_upper_stopping} we obtain
	\begin{align}\label{eq:C0_at_xU}
		c_D &= H_3(\oline{x}) 
		= 
		\mathbf{u}_{3}^{-} e^{-\rho_3^- (\oline{x} - x_{3,-}^\mathrm{a})} 
		+ 
		\mathbf{u}_{3}^{+} e^{-\rho_3^+ (\oline{x} - x_{3,+}^\mathrm{a})} 
		+ \frac{\mu}{a_{3,1}}\gamma(\uline{x},\mathbf{u}_{1}^{\pm}) + q_3(\oline{x}).
	\end{align}
	That is,
	\[
	\mathbf{u}_{3}^{-} m_{1,-}^{(3)} 
	+ \mathbf{u}_{3}^{+} m_{1,+}^{(3)}
	= \beta_{1}^{(3)}
	\]
	for
	\begin{align*}
		m_{1,-}^{(3)} = e^{-\rho_3^- (\oline{x} - x_{3,-}^\mathrm{a})}, \quad 
		m_{1,+}^{(3)} = e^{-\rho_3^+ (\oline{x} - x_{3,+}^\mathrm{a})}, \quad
		\beta_{1}^{(3)} = c_D - \frac{\mu}{a_{3,1}}\gamma(\uline{x},\mathbf{u}_{1}^{\pm}) - q_3(\oline{x}).
	\end{align*}
	
	On the other hand, by \eqref{eq:FBP_example_candidate_instantaneous_stop},
	\begin{align*}
		&\mathbf{u}_{2}^{-} e^{-\rho_2^- (x^\lambda - x_{2,-}^\mathrm{a})} + \mathbf{u}_{2}^{+} e^{-\rho_2^+ (x^\lambda - x_{2,+}^\mathrm{a})} + \frac{\mu}{a_{2,1}}\gamma(\uline{x},\mathbf{u}_{1}^{\pm}) + q_2(x^\lambda) \\
		&= H_2(x^\lambda) 
		= H_3(x^\lambda) \\
		&= \mathbf{u}_{3}^{-} e^{-\rho_3^- (x^\lambda - x_{3,-}^\mathrm{a})} + \mathbf{u}_{3}^{+} e^{-\rho_3^+ (x^\lambda - x_{3,+}^\mathrm{a})} + \frac{\mu}{a_{3,1}}\gamma(\uline{x},\mathbf{u}_{1}^{\pm}) + q_3(x^\lambda),
	\end{align*}
	which gives
	\[
	\mathbf{u}_{3}^{-} m_{2,-}^{(3)} + \mathbf{u}_{3}^{+} m_{2,+}^{(3)} = \beta_{2}^{(3)}
	\]
	for
	\begin{align*}
		m_{2,-}^{(3)} &= e^{-\rho_3^- (x^\lambda - x_{3,-}^\mathrm{a})}, \quad
		m_{2,+}^{(3)} = e^{-\rho_3^+ (x^\lambda - x_{3,+}^\mathrm{a})}, \\
		\beta_2^{(3)} &= \mathbf{u}_{2}^{-} e^{-\rho_2^- (x^\lambda - x_{2,-}^\mathrm{a})} + \mathbf{u}_{2}^{+} e^{-\rho_2^+ (x^\lambda - x_{2,+}^\mathrm{a})} + \mu\gamma(\uline{x},\mathbf{u}_{1}^{\pm})\lrp{1/a_{2,1} - 1/a_{3,1}} + q_2(x^\lambda) - q_3(x^\lambda).
	\end{align*}
	
	Therefore, $\mathbf{u}_{3}^{\pm}$ solve
	\begin{align}\label{eq:linear_system_3_stable}
		\begin{pmatrix}
			m_{1,-}^{(3)} & m_{1,+}^{(3)} \\
			m_{2,-}^{(3)} & m_{2,+}^{(3)}
		\end{pmatrix}
		\begin{pmatrix}
			\mathbf{u}_{3}^{-} \\ \mathbf{u}_{3}^{+}
		\end{pmatrix} 
		=
		\begin{pmatrix}
			\beta_{1}^{(3)} \\ \beta_{2}^{(3)}
		\end{pmatrix}\quad
		\lrp{\text{resp.}
			\begin{pmatrix}
				\alpha_{1,-}^{(3)} & \alpha_{1,+}^{(3)} \\
				\alpha_{2,-}^{(3)} & \alpha_{2,+}^{(3)}
			\end{pmatrix}
			\begin{pmatrix}
				\mathbf{c}_{3}^{-} \\ \mathbf{c}_{3}^{+}
			\end{pmatrix} 
			=
			\begin{pmatrix}
				\beta_{1}^{(3)} \\ \beta_{2}^{(3)}
		\end{pmatrix}},
	\end{align}
	where $\alpha_{i,\pm}^{(3)} = e^{-\rho_3^\pm x_{3,\pm}^\mathrm{a}}m_{i,\pm}^{(3)}$,
	$i=1,2$.
	
	%-----%
	\paragraph{Uniqueness of the solution of the boundary-value problem.}\ \\
	\vspace{-0.3cm}
	%-----%
	
	\noindent The next lemma proves that the three linear systems above are nonsingular,
	and therefore uniquely determine the coefficients $\mathbf{u}_{i}^{\pm}$.
	
	\begin{lemma}[Unique solution of the linear systems]\label{lm:uniquenes_solution_linear_systems}
		The linear systems \eqref{eq:linear_system_1_stable},
		\eqref{eq:linear_system_2_stable}, and \eqref{eq:linear_system_3_stable}
		each have a unique solution.
	\end{lemma}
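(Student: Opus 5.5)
Each of the three systems is $2\times 2$, so the plan is to show its determinant is nonzero. Since $\alpha^{(j)}_{i,\pm}=e^{-\rho_j^\pm x^{\mathrm a}_{j,\pm}}m^{(j)}_{i,\pm}$ differs from $m^{(j)}_{i,\pm}$ only by positive column scalings, it suffices to work with the unscaled systems in the coefficients $\mathbf c_j^\pm$. Write $x_0<x_1$ for the two endpoints of the relevant interval ($[\uline x,x^\kappa]$, $[x^\kappa,x^\lambda]$ or $[x^\lambda,\oline x]$) and $\ell=x_1-x_0>0$. The key inputs from the earlier analysis are the signs of the exponents. From \eqref{eq:ODE_H_exponents}, $\rho_j^+\rho_j^-=a_{j,1}/a_{j,3}$ and $\rho_j^+-\rho_j^-=\sqrt{\Delta_j}/a_{j,3}>0$, where $\Delta_j>0$ was shown above. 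In addition, $\rho_j^\pm$ are the roots of $a_{j,3}\rho^2-a_{j,2}\rho+a_{j,1}$, and evaluating this polynomial at $\rho=\mu$ gives $\tfrac12\sigma^2\mu^2-(a_j^*+\tfrac12\mu\sigma^2)\mu+a_j^*\mu-\lambda_j^*=-\lambda_j^*<0$. Hence $\rho_j^-<\mu<\rho_j^+$ strictly, which also removes the degenerate case $\mu=\rho_j^\eta$.

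\emph{System 3.} Here the matrix in the $\mathbf c_3^\pm$ variables is $\begin{pmatrix} e^{-\rho_3^-x_1} & e^{-\rho_3^+x_1}\\ e^{-\rho_3^-x_0} & e^{-\rho_3^+x_0}\end{pmatrix}$. Its determinant is $e^{-\rho_3^-x_1-\rho_3^+x_0}\bigl(e^{-(\rho_3^+-\rho_3^-)\ell}-1\bigr)$ up to sign, which is nonzero because $\rho_3^+\ne\rho_3^-$ and $\ell>0$.

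\emph{System 2.} The first row is $(e^{-\rho_2^-x_0},e^{-\rho_2^+x_0})$. The second row has entries $\mu\,(e^{-\rho_2^\eta x_1}-e^{-\mu\ell}e^{-\rho_2^\eta x_0})/(\mu-\rho_2^\eta)$. Factor $e^{-\rho_2^\eta x_0}$ out of each column, which reduces the matrix to $\begin{pmatrix}1&1\\ g(\rho_2^-)&g(\rho_2^+)\end{pmatrix}$ with $g(\rho)=\mu\,(e^{-\rho\ell}-e^{-\mu\ell})/(\mu-\rho)=\mu\int_0^\ell e^{-\rho s}e^{-\mu(\ell-s)}\,ds$. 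The integral form shows that $g$ is strictly decreasing in $\rho$, so $g(\rho_2^-)\ne g(\rho_2^+)$ and the determinant is nonzero.

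\emph{System 1.} This is the main obstacle, because the factors $1-\tfrac{\mu\sigma^2}{2a_{1,1}}\rho_1^\pm$ can vanish or change sign. I would first simplify them using the characteristic equation: $a_{1,1}-\tfrac{\mu\sigma^2}{2}\rho=a_{1,1}-\mu a_{1,3}\rho$. Combining $a_{1,3}\rho^2-a_{1,2}\rho+a_{1,1}=0$ with $a_{1,2}=a_1^*+\mu a_{1,3}$ gives $a_{1,1}-\mu a_{1,3}\rho=\rho(a_1^*-a_{1,3}\rho)$. After dividing out $a_{1,1}$, the determinant in the $\mathbf c_1^\pm$ variables is proportional to
\[
\rho_1^-(a_1^*-a_{1,3}\rho_1^-)\,e^{-\rho_1^-x_0-\rho_1^+x_1}-\rho_1^+(a_1^*-a_{1,3}\rho_1^+)\,e^{-\rho_1^+x_0-\rho_1^-x_1}
-\tfrac{\mu\sigma^2}{2}\bigl(\rho_1^- -\rho_1^+\bigr)\,\tfrac{\ldots}{a_{1,1}}e^{-(\rho_1^-+\rho_1^+)x_0},
\]
where the last term collects the common contribution $-\tfrac{\mu\sigma^2}{2a_{1,1}}\rho^\pm e^{-\rho^\pm x_0}$ that appears in both rows. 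Factoring out $e^{-(\rho_1^-+\rho_1^+)x_0}$ leaves a function $\phi(\ell)$ of the form $A e^{-\rho_1^+\ell}-Be^{-\rho_1^-\ell}+C$. I would show $\phi(0)$ and $\phi'(\ell)$ have signs that keep $\phi$ nonzero, using $\rho_1^-<\mu<\rho_1^+$ and the sign of $a_{1,1}$ (equivalently of $\rho_1^+\rho_1^-$). In effect this is a Wronskian-type argument for the homogeneous problem $\{H(x_0)-\tfrac{\mu\sigma^2}{2a_{1,1}}H'(x_0)=0,\ H(x_1)-\tfrac{\mu\sigma^2}{2a_{1,1}}H'(x_0)=0\}$.

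If the direct sign check fails, my fallback is to argue uniqueness by contradiction. A nonzero homogeneous solution would give a function $H$ solving the ODE with zero forcing and zero $\gamma$-shift, for which $\mathcal R$ is identically zero, and hence a solution of the homogeneous integro-differential equation $\InfGen^*H=0$ on $I_1$ with $H$ constant on $(-\infty,x_0]$. A maximum-principle argument, as in Step 2 of Theorem~\ref{thm:verifying_candidate}, would then force $H\equiv0$.
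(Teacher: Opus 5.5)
Your arguments for Systems 2 and 3 are correct and match the paper's. For System 3 the determinant is an explicit nonvanishing difference of exponentials. For System 2 the paper also gets the sign from the integral representation $\int_0^{d_2}e^{-(\mu-t)s}\,\rmd s$; your monotone $g$ is the same computation under a different column normalization. Your observation that $\rho_i^-<\mu<\rho_i^+$, because the characteristic polynomial equals $-\lambda_i^*<0$ at $\rho=\mu$, is a useful addition: it shows that the case $\mu=\rho_i^\eta$, which the paper handles by a limit, never occurs.

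The gap is System 1, which you rightly call the main obstacle but do not resolve. The displayed determinant contains an unspecified term, and the coefficients $A,B,C$ of $\phi$ are never identified. The signs of $\phi(0)$ and $\phi'$ are announced as a plan rather than checked. The fallback is also only a sketch. To pass from a nontrivial homogeneous solution of the $2\times2$ system to a solution of the homogeneous integro-differential equation on $I_1$, you must use the coupling $\gamma=\tfrac12\sigma^2H_1'(\uline{x})+\gamma_*$. You do not verify that the resulting residual $\mathcal R$ vanishes. As written, the lemma is not proved for System 1.

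The missing step is short, and you already hold its key input. Use $\rho_1^+\rho_1^-=a_{1,1}/a_{1,3}$, i.e.\ $\tfrac{\mu\sigma^2}{2a_{1,1}}\rho_1^\pm=\mu/\rho_1^\mp$. The first-row entries then become $e^{-\rho_1^\pm\uline{x}}(1-\mu/\rho_1^\mp)$. These never vanish since $\rho_1^\pm\neq\mu$, so your worry on that point is unfounded. The $H_1'(\uline{x})$ contributions in the second row coincide with those in the first. Subtracting the first row from the second therefore gives, with $d_1=x^\kappa-\uline{x}$,
\[
\Theta_1=e^{-(\rho_1^-+\rho_1^+)\uline{x}}\Bigl[\bigl(1-\tfrac{\mu}{\rho_1^+}\bigr)\bigl(e^{-\rho_1^+d_1}-1\bigr)-\bigl(1-\tfrac{\mu}{\rho_1^-}\bigr)\bigl(e^{-\rho_1^-d_1}-1\bigr)\Bigr].
\]
This is the paper's expression $e^{-(\rho_1^-+\rho_1^+)\uline{x}}\bigl(h(\rho_1^+)-h(\rho_1^-)\bigr)$, and the paper concludes from the strict monotonicity of $h$.

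With your bounds the sign is immediate. The first product is negative because $\rho_1^+>\mu$. The second is positive whether $\rho_1^-\in(0,\mu)$ or $\rho_1^-<0$, and $\rho_1^-\neq0$ because $a_{1,1}\neq0$. Hence $\Theta_1<0$. Equivalently, take $\phi(\ell)$ to be the bracket with $d_1=\ell$. Then $\phi(0)=0$, since the two rows coincide when $\ell=0$, and $\phi'(\ell)=-(\rho_1^+-\mu)e^{-\rho_1^+\ell}+(\rho_1^--\mu)e^{-\rho_1^-\ell}<0$. So your planned argument does go through once these lines are written out.
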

	
	%-------------------------------------------------%
	\subsubsection{Determination of the free boundary}
	\label{sec:smooth_fit}
	%-------------------------------------------------%
	
	It remains to determine the reflecting barriers and ambiguity thresholds
	that satisfy the smooth-fit conditions \eqref{FBP_candidte_example_smooth-fit}.
	To this end, define ${\bf x} = (\uline{x},x^\kappa,x^\lambda,\oline{x})$ 
	% \[
	% {\bf x} = (\uline{x},x^\kappa,x^\lambda,\oline{x})
	% \in
	% \left\{
	% {\bf x}\in\R^4 :
	% \uline{x} < x^\kappa < x^\lambda < \oline{x}
	% \right\},
	% \]
	and introduce the residual function
	\begin{align}\label{eq:C1-jump-residual}
		R(\mathbf{x})=\big(H_1'(\uline{x}),\;
		H_3'(\oline{x}; {\bf x}),\;
		H_1'(x^\kappa; {\bf x})-H_2'(x^\kappa; {\bf x}),\;
		H_2'(x^\lambda; {\bf x})-H_3'(x^\lambda; {\bf x})
		\big),
	\end{align}
	where
	\begin{align*}
		H_i'(x; {\bf x}) 
		=
		-\mathbf{u}_{i}^{-}({\bf x}) \rho_i^-e^{-\rho_i^- (x - x_{i,-}^\mathrm{a}({\bf x}))}
		- \mathbf{u}_{i}^{+}({\bf x}) \rho_i^+e^{-\rho_i^+ (x - x_{i,+}^\mathrm{a}({\bf x}))}
		+ q_i'(x),
	\end{align*}
	and where the anchors $x_{i,-}^\mathrm{a}({\bf x})$ are defined in \eqref{eq:anchors},
	while the coefficients $\mathbf{u}_{i}^{\pm}({\bf x})$ solve
	\eqref{eq:linear_system_1_stable}, \eqref{eq:linear_system_2_stable},
	and \eqref{eq:linear_system_3_stable}.
	
	In this way, the roots of the residual function $R$ determine the set of free-boundary
	points. Because these four smooth-fit equations are highly nonlinear,
	an explicit solution is generally not available.
	Instead, they can be solved numerically using root-finding methods.
	In Section~\ref{sec:numerics} we adopt a Broyden--Newton procedure to solve this
	system and thereby obtain the optimal thresholds.

	%%%%%%%%%%%
	
	%-------------------------------------------------%
	\section{Numerical results}\label{sec:numerics}
	%-------------------------------------------------%
	
	In this section we study the qualitative impact of model ambiguity and jump risk on the optimal long-run regulation policy. The analytical results of the previous sections reduce the problem to a nonlinear system determining the reflecting barriers, the ambiguity thresholds, and the ergodic value. Since this system does not admit a closed-form solution in general, numerical analysis is needed in order to understand how the optimal policy depends on the model parameters.
	
	We focus on the case of exponentially distributed negative jumps and quadratic running costs, for which the free-boundary problem admits the finite-dimensional representation derived in Section~\ref{sec:negative_exp_jumps}. Our objectives are twofold. First, we illustrate the structure of the optimal control and of the associated worst-case model distortions. Second, we perform comparative statics to assess how ambiguity, jump risk, volatility, and intervention costs affect the location of the reflecting barriers and ambiguity thresholds. We also quantify the value of robustness by comparing ergodic costs under ambiguity-aware and misspecified policies.
	
	To this end, we solve the nonlinear free-boundary system numerically by means of a two-stage root-search procedure equipped with stability safeguards. This allows us to identify the relevant regime and to obtain reliable solutions across a broad range of parameter values.
	
	Overall, the numerical results show that robustness has a systematic effect on the geometry of the optimal policy. Depending on the source of uncertainty, ambiguity may widen, shift, or skew the inaction region, thereby making the regulation rule more conservative. These effects are further shaped by jump risk and by asymmetries in intervention costs, which determine how the controller trades off the frequency and direction of future interventions.
	
	The code used to generate the numerical results is available at
	\url{https://github.com/aguazz/robust_ergodic-singular-control_jump-diffusion}.
	
	%-------------------------------------------------%
	\subsection{Numerical stability considerations}\label{sec:stability}
	%-------------------------------------------------%
	
	The numerical solution of the free-boundary system requires some care, because the linear systems defining the candidate functions $H_i$ may become ill-conditioned for certain parameter configurations. To ensure reliable computation, we incorporate several stability safeguards into the implementation.
	
	First, we monitor configurations in which some coefficients become nearly degenerate, including cases where $a_{i,1}$ is close to zero, characteristic roots nearly coincide, or jump and ODE roots become almost identical. In such cases, the solver either stops or issues a warning, depending on the severity of the instability. Second, we evaluate exponential terms using numerically stable transformations such as
	\[
	\mathrm{expm1}(x)=e^x-1,
	\qquad
	\mathrm{exprel}(x)=\frac{e^x-1}{x},
	\]
	thereby avoiding loss of precision when $x$ is close to zero. Third, the linear systems defining the coefficients of the piecewise ODE solutions are solved using pivoted $LU$ factorization with scaling, which improves numerical conditioning.
	
	These safeguards are particularly useful near parameter values where the model approaches a degenerate regime. They do not affect the qualitative conclusions of the numerical analysis, but they improve the robustness and reproducibility of the computations.
	
	%-------------------------------------------------%
	\subsection{The algorithm}
	%-------------------------------------------------%
	
	The numerical procedure has two nested components. For any candidate set of reflecting barriers and ambiguity thresholds, an \emph{inner step} solves the linear systems determining the piecewise ODE representation of the candidate solution and computes the residuals associated with the boundary and smooth-fit conditions. An \emph{outer step} then updates the thresholds by applying a root-search routine to these residuals.
	
	To preserve the natural ordering of the thresholds, the outer problem is solved in transformed variables based on logarithmic gap parametrization. This guarantees that the candidate thresholds remain ordered throughout the search and substantially improves numerical stability. In our implementation, the outer loop is solved by a quasi-Newton root-search method, while the inner loop computes the corresponding coefficients, candidate functions, and ergodic value.
	
	Algorithm~\ref{alg:two-stages_solver} summarizes the resulting two-stage procedure.
	
	\begin{algorithm}[ht]
		\caption{Two-stage solver}\label{alg:two-stages_solver}
		\begin{algorithmic}[1]
			\Require Parameters $p=(b,\delta,r,\varepsilon,\sigma,\mu,c_U,c_D)$; initial thresholds ${\bf x}_0=(\uline{x}_0,x^\kappa_0,x^\lambda_0,\oline{x}_0)$; tolerance $tol>0$.
			\Ensure Optimal thresholds ${\bf x}^*$, value $\gamma_*$, coefficients $\{\mathbf{u}_{i,*}^{\pm}\}_{i=1}^3$, and $\{H_{i,*}\}_{i=1}^3$.
			
			\Function{InnerSolver}{$\mathbf{x}=(\uline{x},x^\kappa,x^\lambda,\oline{x})$}
			\State \textbf{require} $(\uline{x}, x^\kappa, x^\lambda, \oline{x}) \in \R^4$ such that $\uline{x} < x^\kappa < \oline{x}$ and $x^\kappa < x^\lambda$.
			\If{$x^\lambda < \oline{x}$} \Comment{Regime 1}
			\State To obtain $\mathbf{u}_{1}^{\pm}$, solve \eqref{eq:linear_system_1_stable}; compute $\gamma$ using \eqref{eq:gamma_stable}; solve \eqref{eq:linear_system_2_stable} to obtain $\mathbf{u}_{2}^{\pm}$; solve \eqref{eq:linear_system_3_stable} to obtain $\mathbf{u}_{3}^{\pm}$.
			\State Compute $\{H_i\}_{i=1}^3$ and $\{H_i'\}_{i=1}^3$ using \eqref{eq:ODE_H_sol_gamma} and \eqref{eq:ODE_Hprime_sol_gamma}.
			\State Compute the residual vector 
			$
			R=\big(
			H_1'(\uline{x}),\;
			H_3'(\oline{x}),\;
			H_1'(x^\kappa)-H_2'(x^\kappa),\;
			H_2'(x^\lambda)-H_3'(x^\lambda)
			\big)
			$.
			\Else \Comment{Regime 2}
			\State Solve \eqref{eq:linear_system_1_stable} to obtain $\mathbf{u}_{1}^{\pm}$; compute $\gamma$ using \eqref{eq:gamma_stable}. 
			\State To obtain $\mathbf{u}_{2}^{\pm}$ in Regime 2, solve the linear system
			$$
			\begin{pmatrix}
				m_{1,-}^{(2)} & m_{1,+}^{(2)} \\
				m_{1,-}^{(3)} & m_{1,+}^{(3)}
			\end{pmatrix}
			\begin{pmatrix}
				\mathbf{u}_{2}^{-} \\ \mathbf{u}_{2}^{+}
			\end{pmatrix} 
			=
			\begin{pmatrix}
				\beta_{1}^{(2)} \\ \beta_{1}^{(3)}
			\end{pmatrix}\!.
			$$
			\State Compute $\{H_i\}_{i=1}^2$ and $\{H_i'\}_{i=1}^2$ using \eqref{eq:ODE_H_sol_gamma} and \eqref{eq:ODE_Hprime_sol_gamma}, and set $H_3 \equiv c_D$, $H_3' \equiv 0$, and $\bf u_3^- = \bf u_3^+ = 0$.
			\State Compute the residual vector 
			$
			R=\big(
			H_1'(\uline{x}),\;
			H_2'(\oline{x}),\;
			H_1'(x^\kappa)-H_2'(x^\kappa),\;
			\intOp H(x^\lambda)
			\big)
			$.
			\EndIf
			\State \Return $(\gamma,\{\mathbf{u}_{i}^{\pm}\}_{i=1}^3,\{H_i\}_{i=1}^3,\{H_i'\}_{i=1}^3, R)$.
			\EndFunction
			
			\Function{OuterSolver}{${\bf x}_0,\; tol$}
			\State Reparametrize gaps: $Z({\bf x}) = \big(z_1,z_2,z_3,z_4\big) = \big(\uline{x}, \log(x^\kappa-\uline{x}), \log(x^\lambda-x^\kappa), \log(\oline{x}-x^\kappa)\big)$.
			\State Define the inverse mapping $X({\bf z})$ to recover ordered thresholds.
			\State Find the root of $R \gets \textsc{InnerSolver}\big(X({\bf z})\big)$ by running a root-search method initialized at ${\bf z}_0 = Z({\bf x}_0)$.
			\State \Return $\big(\mathbf{x}^*,\gamma_*,\{\mathbf{u}_{i,*}^{\pm}\}_{i=1}^3,\{H_{i,*}\}_{i=1}^3\big)$.
			\EndFunction
		\end{algorithmic}
	\end{algorithm}
	
	%-------------------------------------------------%
	\subsection{Experiments}
	%-------------------------------------------------%
	
	We now use the numerical solution to study the qualitative impact of model ambiguity and jump risk on the optimal long-run regulation policy. The thresholds $\underline{x}$ and $\overline{x}$ determine the inaction region, while $x^\kappa$ and $x^\lambda$ identify the switching points at which the worst-case drift and worst-case jump intensity move between their extreme admissible values.
	
	Unless otherwise stated, the baseline parameters are
	\[
	b=0,\qquad \delta=1,\qquad r=1,\qquad \varepsilon=0.5,\qquad \sigma=1,\qquad \mu=1.
	\]
	To study the role of intervention-cost asymmetry, we consider the three cost configurations
	\[
	(c_U,c_D)\in\{(1,1),(2,1),(1,2)\}.
	\]
	When examining the direct effect of the intervention costs, we keep the same baseline parameters and also consider the drift values $b=-2$ and $b=2$, corresponding to downward- and upward-trending uncontrolled dynamics.
	
	We begin with a representative solution under the baseline specification. Figure~\ref{fig:solution} displays the numerically computed function $H$, the optimal reflecting barriers and ambiguity thresholds, and a sample path of the optimally controlled process together with the associated controls. The figure illustrates the reflecting-band structure of the optimal policy and shows how the ambiguity thresholds partition the state space into regions associated with different worst-case distortions.
	
	\begin{figure}[!ht]
		\captionsetup[sub]{labelformat=parens}
		\centering
		
		\begin{subfigure}[t]{0.47\textwidth}
			\centering
			\includegraphics[width=\linewidth]{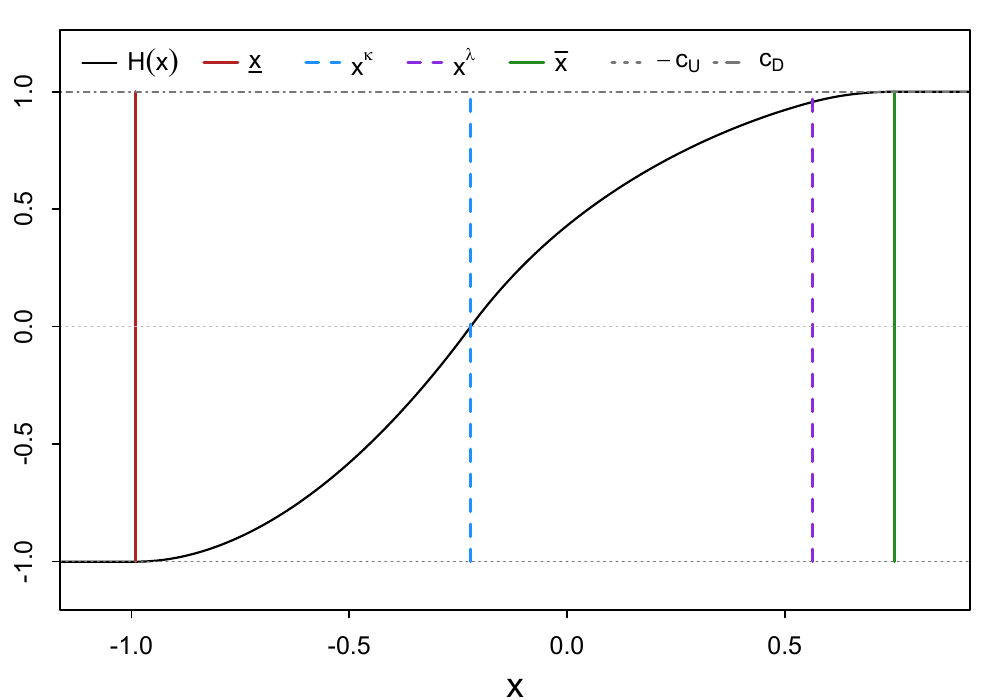}
			\caption{Plot of $H(x)$ as a function of $x$, with highlighted reflecting barriers and ambiguity thresholds.}
		\end{subfigure}\hspace{2pt}
		\begin{subfigure}[t]{0.47\textwidth}
			\centering
			\includegraphics[width=\linewidth]{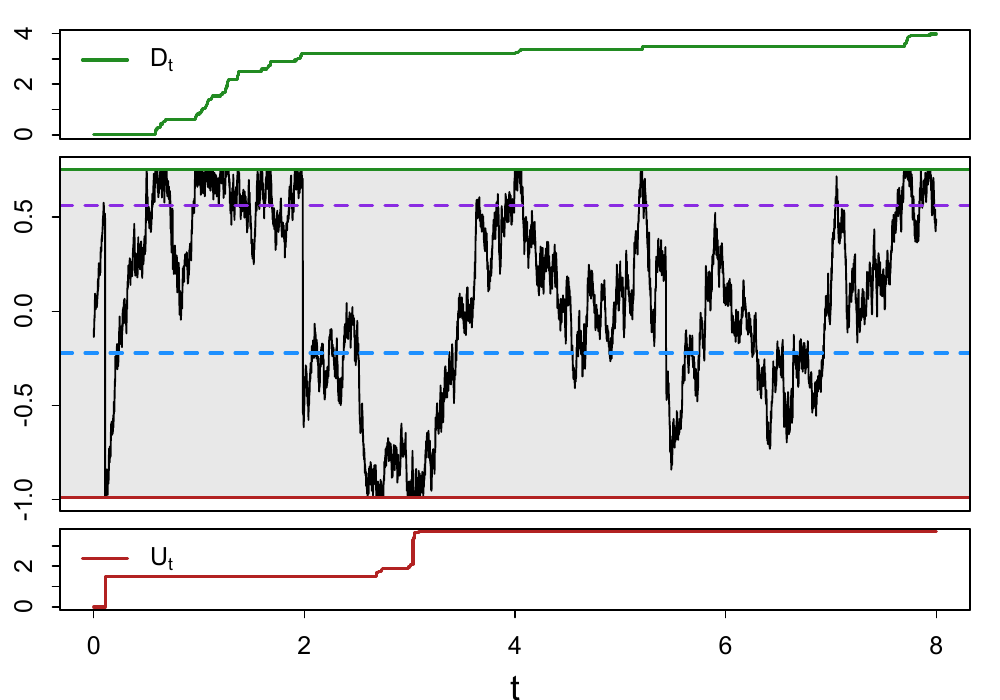}
			\caption{Sample path of the controlled process $\oline{X}$ and the controlling processes $D$ and $U$ as functions of time $t\in(0,8)$.}
		\end{subfigure}\hspace{2pt}
		
		\caption{\small Panel (a) displays the numerical solution for $H$ together with the ambiguity thresholds $x^\kappa$ and $x^\lambda$ and the reflecting barriers $\uline{x}$ and $\oline{x}$. Panel (b) shows a sample path of the optimally controlled process $\oline{X}_t$. The parameter values are $b=0$, $\delta=1$, $r=1$, $\varepsilon=0.5$, $\sigma=1$, $\mu=1$, $c_U=1$, and $c_D=1$.}
		\label{fig:solution}
	\end{figure}
	
	We next report comparative statics by varying one parameter at a time and recording the resulting reflecting barriers and ambiguity thresholds. Some curves exhibit small gaps. These correspond to isolated parameter values at which the solver activates a numerical stopping condition in order to avoid near-degenerate configurations. These cases do not affect the qualitative conclusions reported below.
	
	\paragraph{Effect of the drift parameter $\pmb{b}$.}
	Figure~\ref{fig:comparative_statistics_drift} shows that a larger positive drift pushes the uncontrolled system toward higher states, away from the running-cost minimizer near zero. 
	The controller reacts by adjusting the reflecting band in order to limit costly excursions. In our numerical experiments, a higher drift typically shifts the band downward, lowering the upper barrier and allowing more room on the lower side, since downward deviations are more likely to self-correct. 
	The ambiguity thresholds move accordingly, indicating that the worst-case model tends to amplify the upward pressure over a wider portion of the relevant state space. 
	The case of a negative drift exhibits the symmetric behavior. 
	When $b$ is close to zero, the system is easier to regulate and the inaction region remains relatively narrow. 
	Under asymmetric intervention costs, the reflecting band tilts toward the side where intervention is cheaper, reflecting the lower cost of correcting deviations in that direction.
	
	\begin{figure}[!ht]
		\captionsetup[sub]{labelformat=parens}
		\centering
		
		\begin{subfigure}[t]{0.32\textwidth}\centering
			\includegraphics[width=\linewidth]{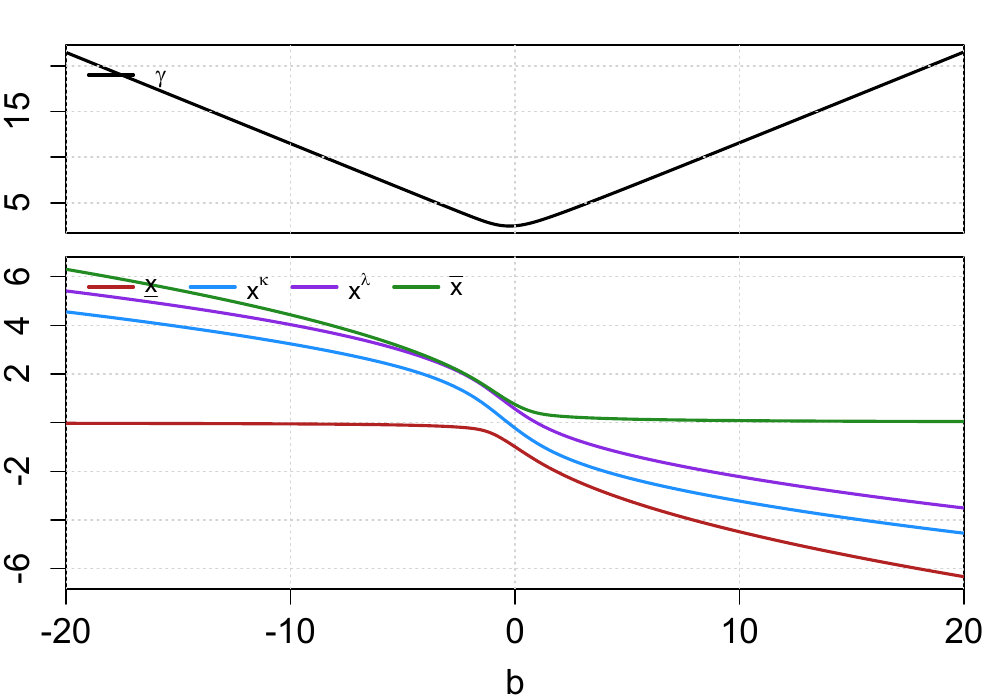}
			\caption{$c_U = 1$, $c_D = 1$}
		\end{subfigure}%
		\begin{subfigure}[t]{0.32\textwidth}\centering
			\includegraphics[width=\linewidth]{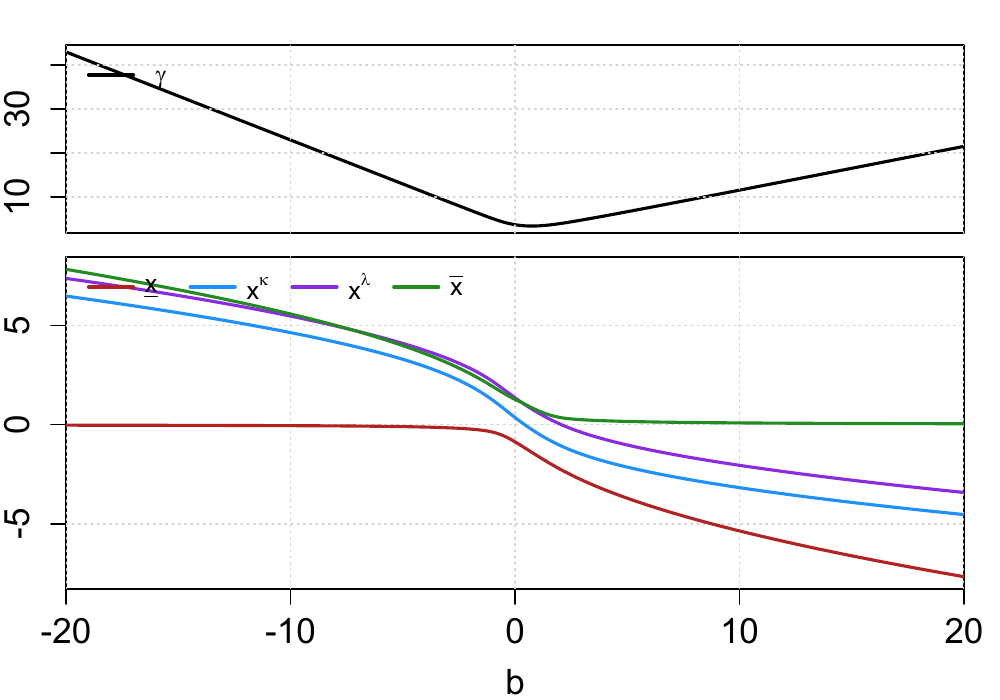}
			\caption{$c_U = 2$, $c_D = 1$}
		\end{subfigure}%
		\begin{subfigure}[t]{0.32\textwidth}\centering
			\includegraphics[width=\linewidth]{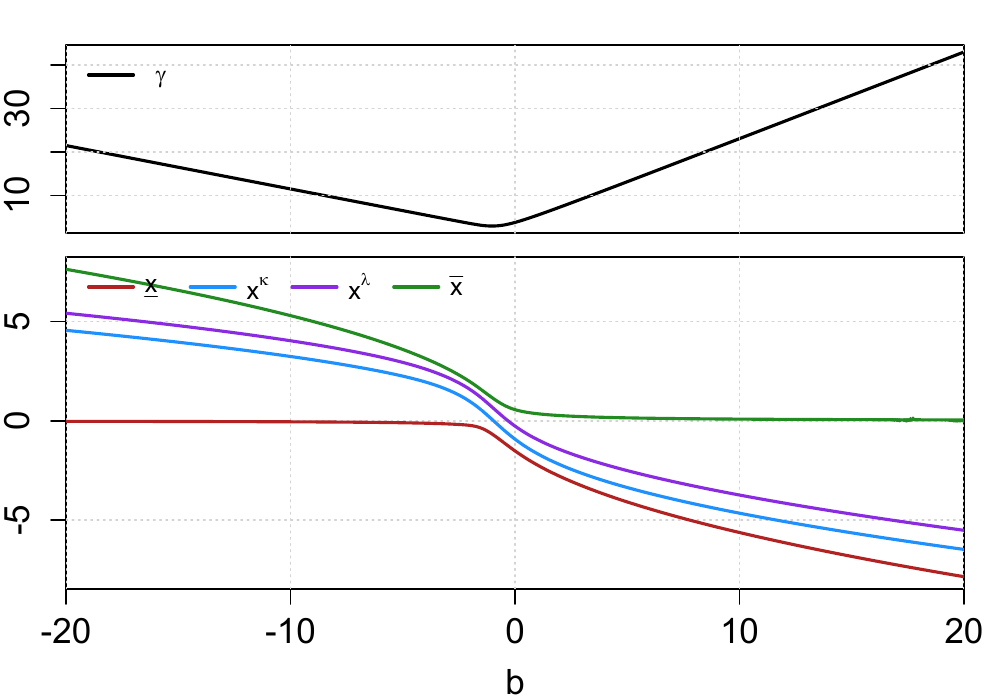}
			\caption{$c_U = 1$, $c_D = 2$}
		\end{subfigure}
		
		\caption{\small Effect of the drift parameter $b\in(-20,20)$ on the reflecting barriers and ambiguity thresholds.}
		\label{fig:comparative_statistics_drift}
	\end{figure}
	
	\paragraph{Effect of the drift-ambiguity parameter $\pmb{\delta}$.}
	Figure~\ref{fig:comparative_statistics_drift-ambiguity} shows that increasing $\delta$ enlarges the set of admissible drift distortions available to the adversary. 
	As a consequence, the controller adopts a more cautious policy and adjusts the reflecting band in order to reduce exposure to unfavorable drift realizations. 
	In the numerical experiments, this adjustment may involve a widening of the inaction region, a shift of the band, or a change in its asymmetry, depending on the configuration of the intervention costs. 
	Under symmetric intervention costs, the response is approximately balanced across the two sides of the band, whereas under asymmetric costs the adjustment is more pronounced on the side associated with the more expensive intervention. 
	The ambiguity threshold shifts accordingly, indicating that the worst-case drift distortion typically becomes relevant over a larger portion of the state space.
	
	\begin{figure}[!ht]
		\captionsetup[sub]{labelformat=parens}
		\centering
		
		\begin{subfigure}[t]{0.32\textwidth}\centering
			\includegraphics[width=\linewidth]{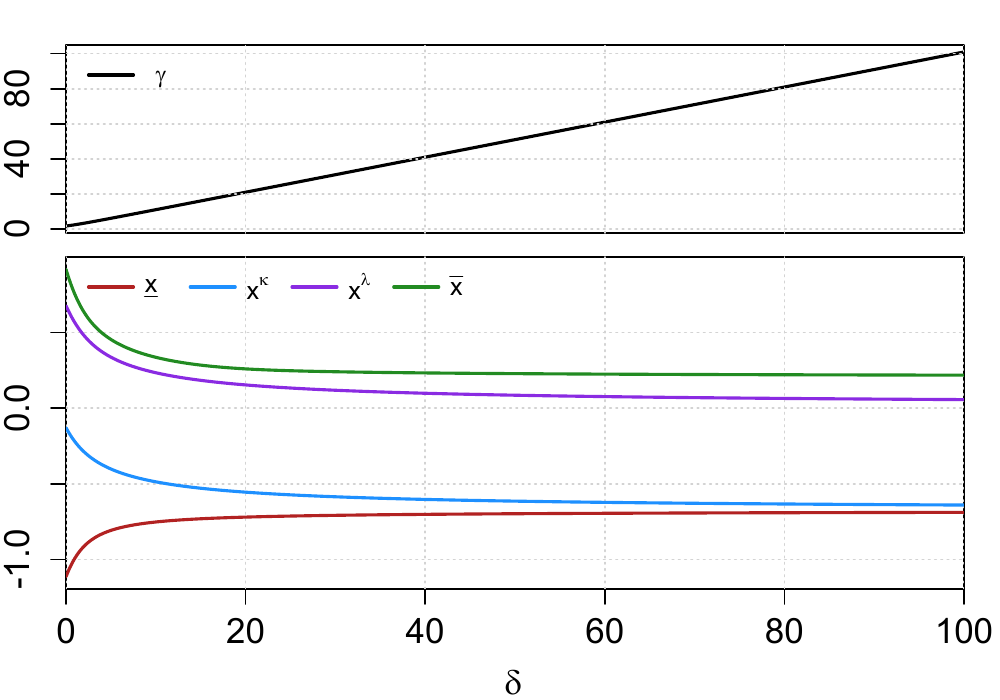}
			\caption{$c_U = 1$, $c_D = 1$}
		\end{subfigure}%
		\begin{subfigure}[t]{0.32\textwidth}\centering
			\includegraphics[width=\linewidth]{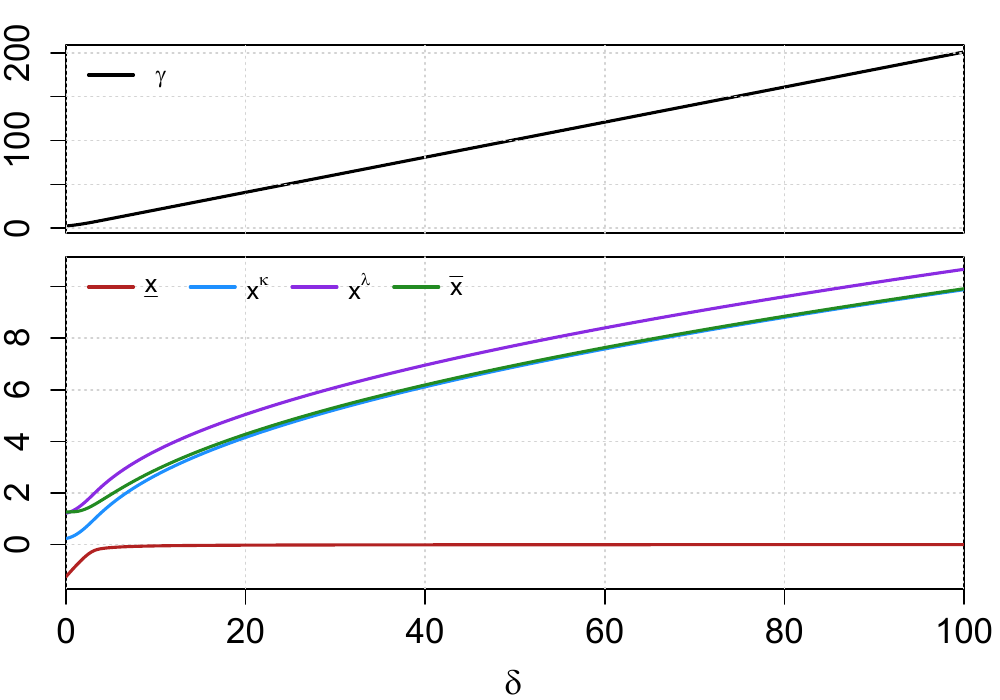}
			\caption{$c_U = 2$, $c_D = 1$}
		\end{subfigure}%
		\begin{subfigure}[t]{0.32\textwidth}\centering
			\includegraphics[width=\linewidth]{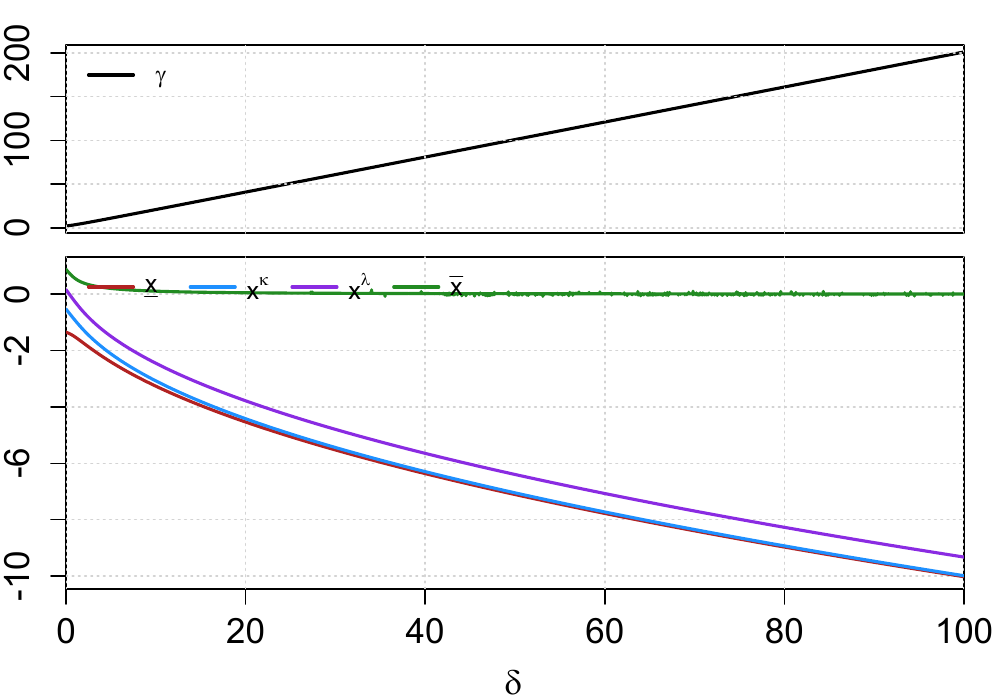}
			\caption{$c_U = 1$, $c_D = 2$}
		\end{subfigure}
		
		\caption{\small Effect of the drift-ambiguity parameter $\delta\in(0,100)$ on the reflecting barriers and ambiguity thresholds.}
		\label{fig:comparative_statistics_drift-ambiguity}
	\end{figure}
	
	\paragraph{Effect of the jump intensity $\pmb{r}$.}
	Figure~\ref{fig:comparative_statistics_intensity} highlights the effect of more frequent downward shocks. 
	When $r$ is small, jumps are rare and the state remains close to the running-cost minimizer, so a relatively tight reflecting band is optimal. 
	As $r$ increases, downward shocks occur more frequently, which changes the trade-off between immediate intervention and waiting for future shocks to move the state toward the desired region. 
	In the numerical experiments, the controller typically responds by widening the inaction region, allowing the system to absorb part of the adjustment through the jump dynamics rather than through costly control actions. 
	Under asymmetric intervention costs, the widening is tilted toward the side where adjustments are cheaper. 
	The ambiguity threshold shifts consistently with this change, indicating that the least-favorable regime tends to become relevant over a larger portion of the state space.
	
	\begin{figure}[!ht]
		\captionsetup[sub]{labelformat=parens}
		\centering
		
		\begin{subfigure}[t]{0.32\textwidth}\centering
			\includegraphics[width=\linewidth]{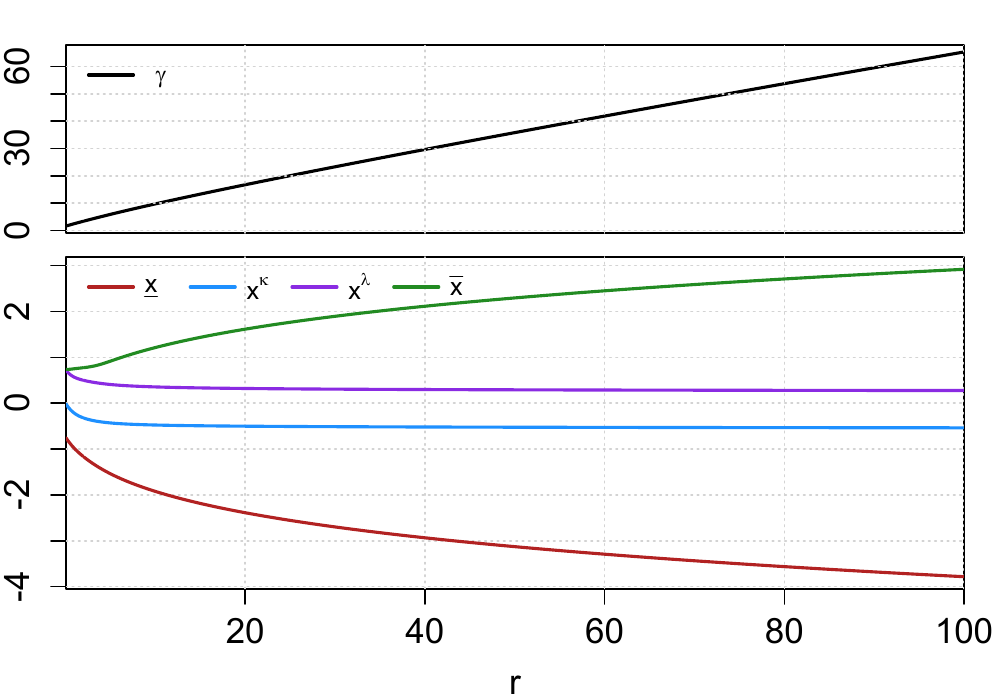}
			\caption{$c_U = 1$, $c_D = 1$}
		\end{subfigure}%
		\begin{subfigure}[t]{0.32\textwidth}\centering
			\includegraphics[width=\linewidth]{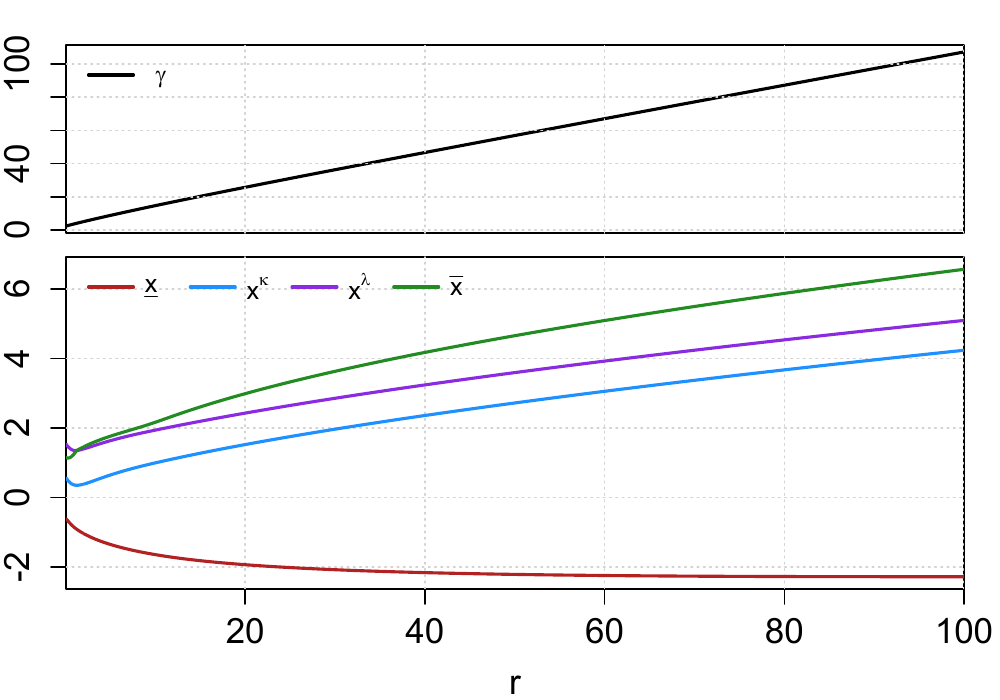}
			\caption{$c_U = 2$, $c_D = 1$}
		\end{subfigure}%
		\begin{subfigure}[t]{0.32\textwidth}\centering
			\includegraphics[width=\linewidth]{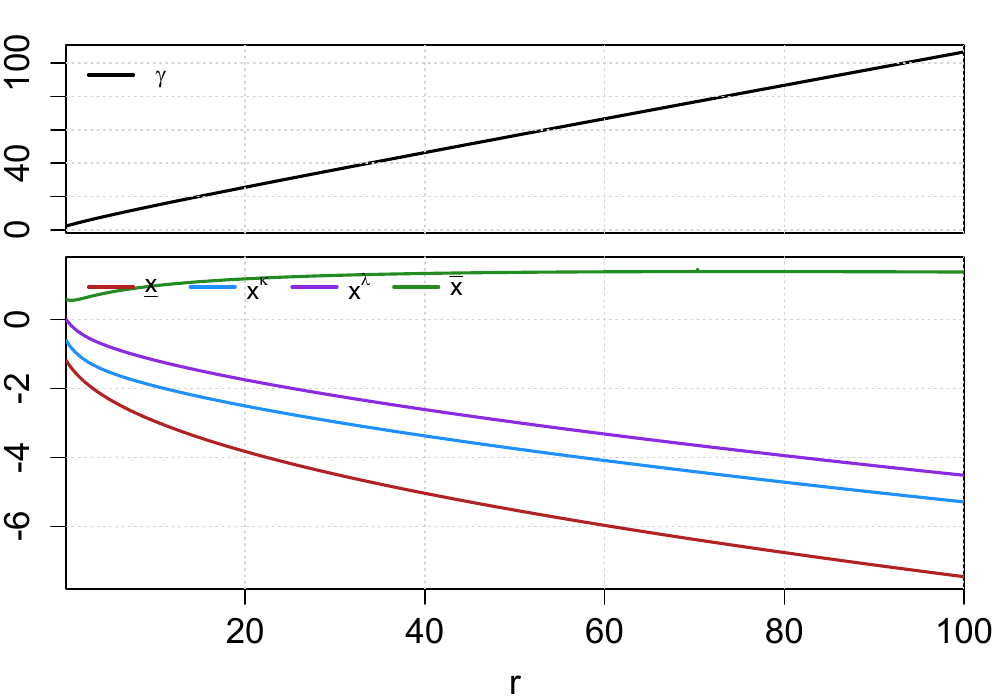}
			\caption{$c_U = 1$, $c_D = 2$}
		\end{subfigure}
		
		\caption{\small Effect of the jump intensity $r\in(0,100)$ on the reflecting barriers and ambiguity thresholds.}
		\label{fig:comparative_statistics_intensity}
	\end{figure}
	
	\paragraph{Effect of the intensity-ambiguity parameter $\pmb{\varepsilon}$.}
	Figure~\ref{fig:comparative_statistics_intensity-ambiguity} isolates the effect of ambiguity about the jump intensity. 
	As $\varepsilon$ increases, the admissible interval for the jump intensity widens, giving the adversary more scope to distort the frequency of downward shocks. 
	In response, the controller typically adopts a more conservative policy, enlarging or shifting the inaction region in order to reduce exposure to adverse jump realizations. 
	Under asymmetric intervention costs, the adjustment is tilted toward avoiding the more expensive intervention. 
	At the same time, the intensity threshold shifts accordingly, indicating that the high-intensity worst-case regime tends to apply over a larger portion of the relevant state space.
	
	\begin{figure}[!ht]
		\captionsetup[sub]{labelformat=parens}
		\centering
		
		\begin{subfigure}[t]{0.32\textwidth}\centering
			\includegraphics[width=\linewidth]{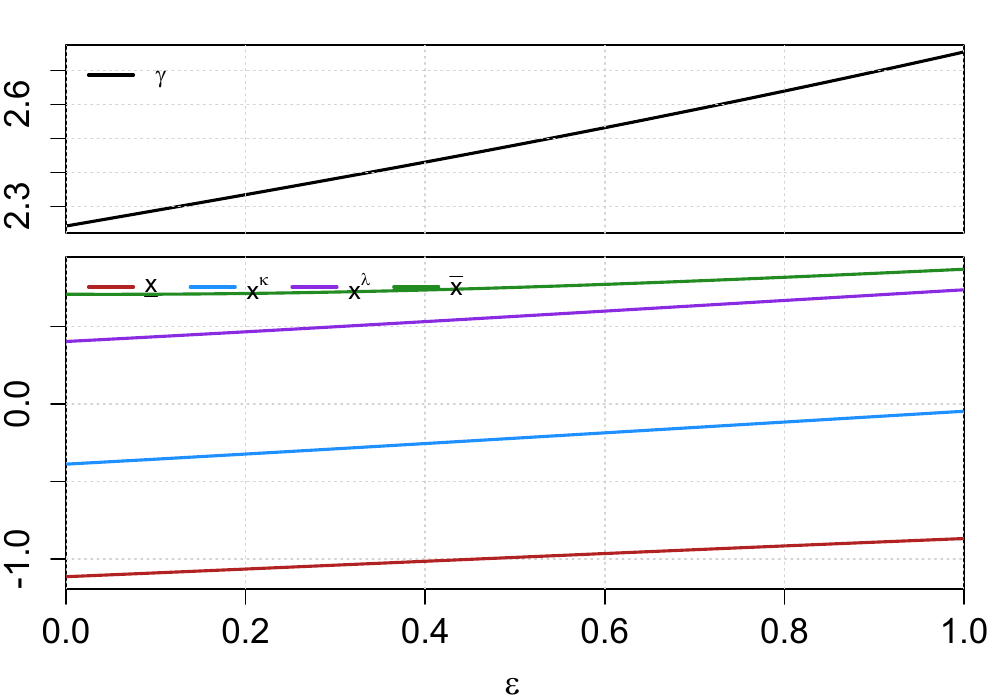}
			\caption{$c_U = 1$, $c_D = 1$}
		\end{subfigure}%
		\begin{subfigure}[t]{0.32\textwidth}\centering
			\includegraphics[width=\linewidth]{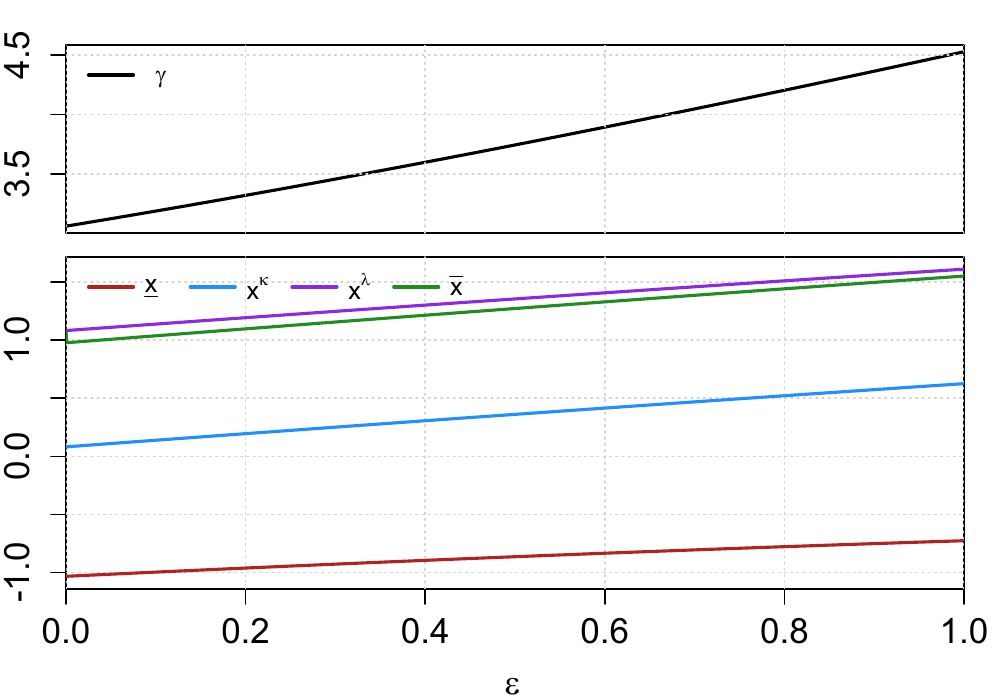}
			\caption{$c_U = 2$, $c_D = 1$}
		\end{subfigure}%
		\begin{subfigure}[t]{0.32\textwidth}\centering
			\includegraphics[width=\linewidth]{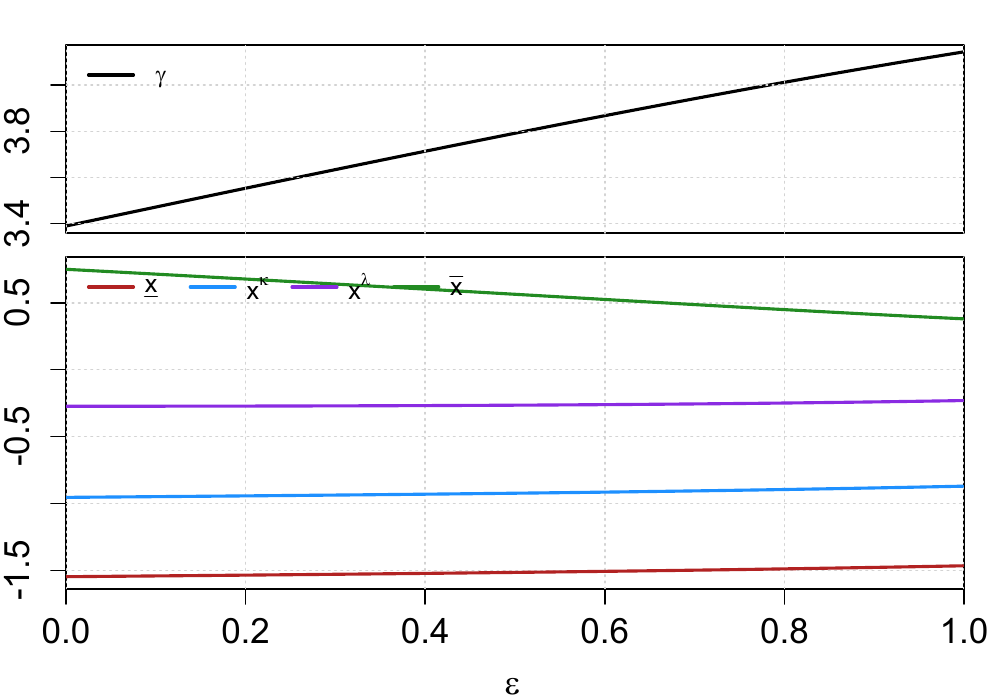}
			\caption{$c_U = 1$, $c_D = 2$}
		\end{subfigure}
		
		\caption{\small Effect of the intensity-ambiguity parameter $\varepsilon\in(0,1)$ on the reflecting barriers and ambiguity thresholds.}
		\label{fig:comparative_statistics_intensity-ambiguity}
	\end{figure}
	
	\paragraph{Effect of the volatility parameter $\pmb{\sigma}$.}
	Figure~\ref{fig:comparative_statistics_volatility} shows that, when $\sigma$ is small, diffusion fluctuations are modest and it is optimal to keep a narrow inaction region around the running-cost minimizer. 
	As $\sigma$ increases, Brownian variability generates more frequent excursions away from the low-cost region, and maintaining a narrow band would require more frequent interventions. 
	The optimal response is therefore typically to widen the reflecting band, approximately symmetrically when $c_U=c_D$, and to skew it toward the cheaper-action side when intervention costs are asymmetric. 
	The ambiguity thresholds shift accordingly, indicating that the worst-case model becomes relevant over a wider portion of the state space as volatility increases.
	
	\begin{figure}[!ht]
		\captionsetup[sub]{labelformat=parens}
		\centering
		
		\begin{subfigure}[t]{0.32\textwidth}\centering
			\includegraphics[width=\linewidth]{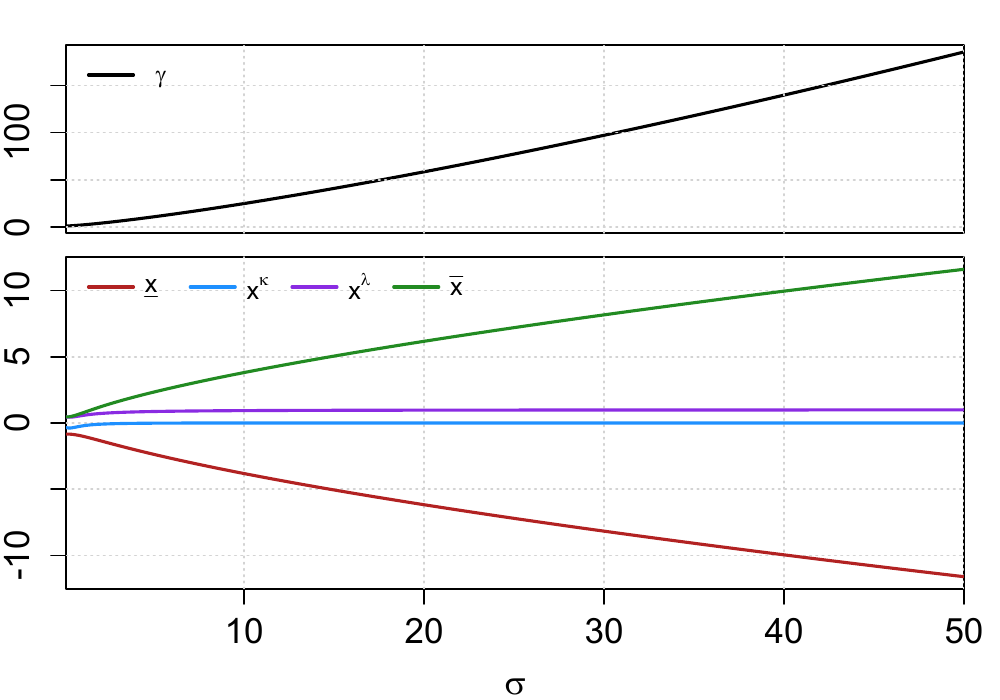}
			\caption{$c_U = 1$, $c_D = 1$}
		\end{subfigure}%
		\begin{subfigure}[t]{0.32\textwidth}\centering
			\includegraphics[width=\linewidth]{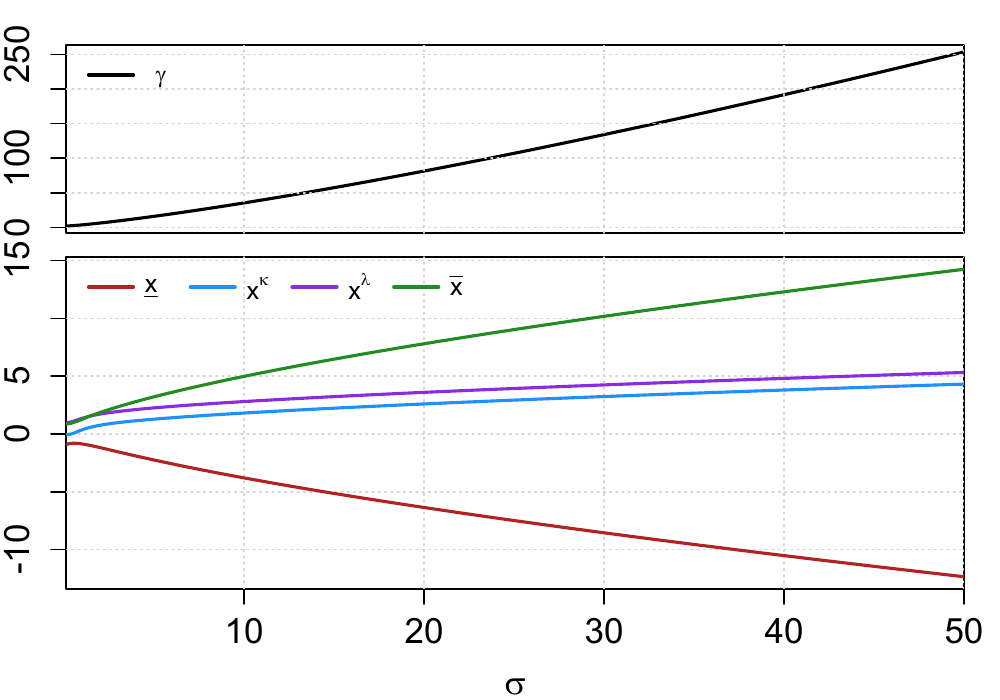}
			\caption{$c_U = 2$, $c_D = 1$}
		\end{subfigure}%
		\begin{subfigure}[t]{0.32\textwidth}\centering
			\includegraphics[width=\linewidth]{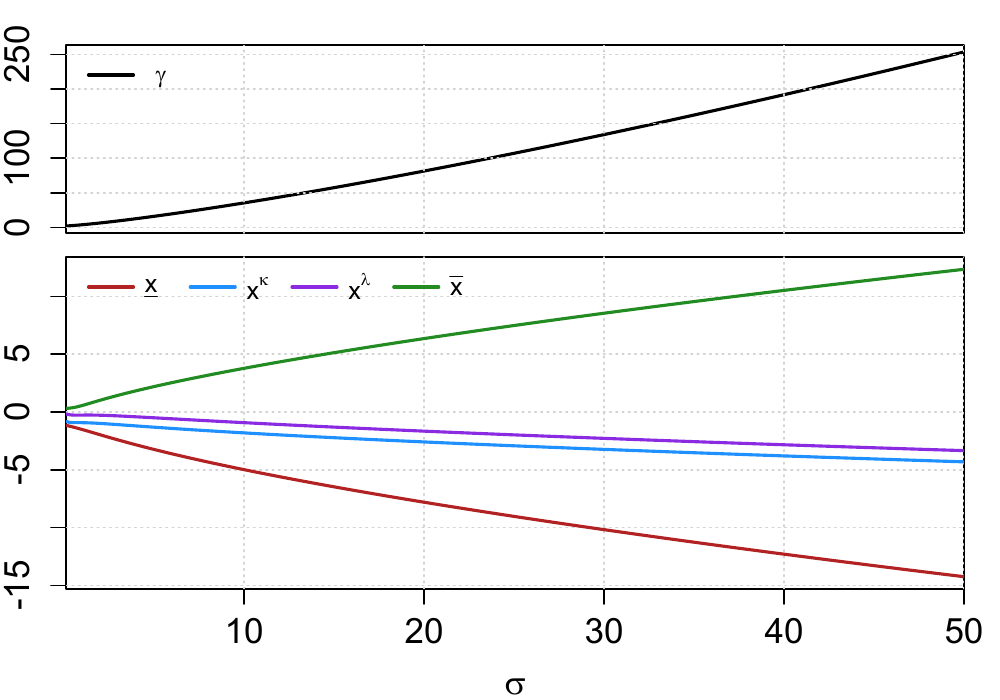}
			\caption{$c_U = 1$, $c_D = 2$}
		\end{subfigure}
		
		\caption{\small Effect of the volatility parameter $\sigma\in(0,50)$ on the reflecting barriers and ambiguity thresholds.}
		\label{fig:comparative_statistics_volatility}
	\end{figure}
	
	\paragraph{Effect of the mean jump size $\pmb{1/\mu=-\Esp[Y]}$.}
	Figure~\ref{fig:comparative_statistics_jump-mean} shows that, as the mean magnitude of downward jumps increases, individual shocks push the state further into low regions. 
	The controller reacts by allowing a wider inaction region, reducing the need for frequent and costly upward corrections after large shocks. 
	In the numerical experiments, the lower barrier typically moves downward, while the upper barrier may move upward, since high states are more likely to be corrected by future downward jumps. 
	Under asymmetric intervention costs, the widening is tilted toward the cheaper-action side. 
	The ambiguity threshold shifts accordingly, indicating that the worst-case jump intensity tends to play a larger role when shocks become more severe.
	
	\begin{figure}[!ht]
		\captionsetup[sub]{labelformat=parens}
		\centering
		
		\begin{subfigure}[t]{0.32\textwidth}\centering
			\includegraphics[width=\linewidth]{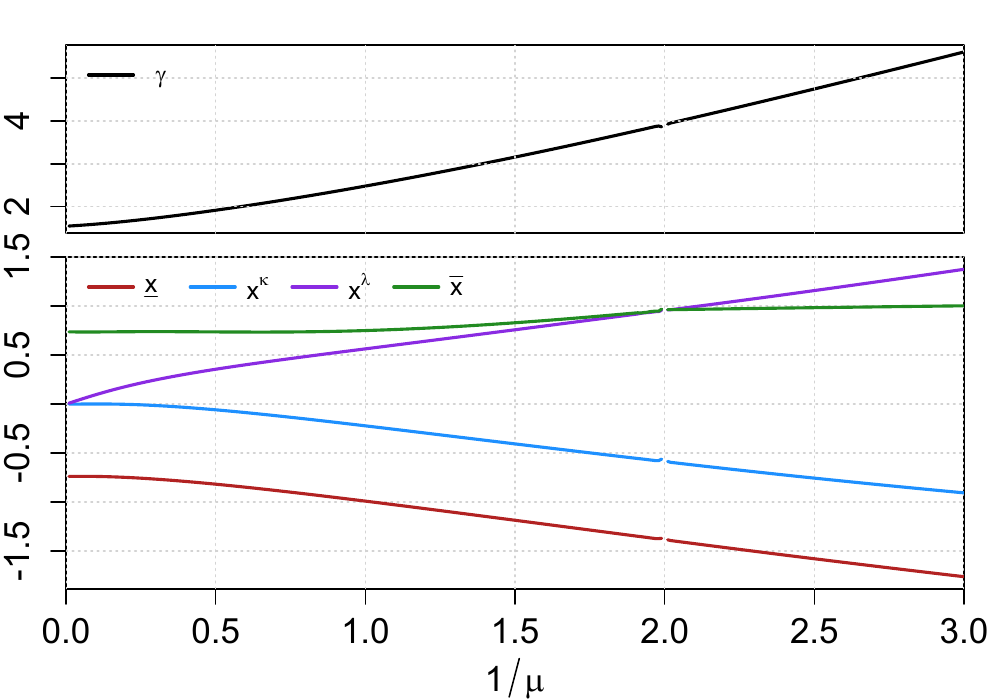}
			\caption{$c_U = 1$, $c_D = 1$}
		\end{subfigure}%
		\begin{subfigure}[t]{0.32\textwidth}\centering
			\includegraphics[width=\linewidth]{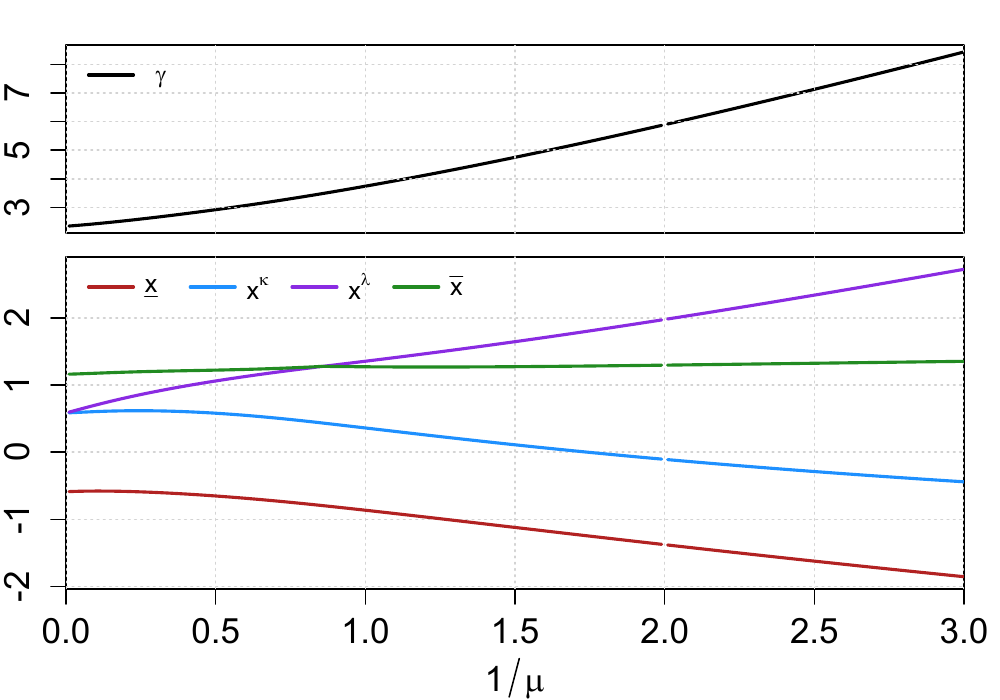}
			\caption{$c_U = 2$, $c_D = 1$}
		\end{subfigure}%
		\begin{subfigure}[t]{0.32\textwidth}\centering
			\includegraphics[width=\linewidth]{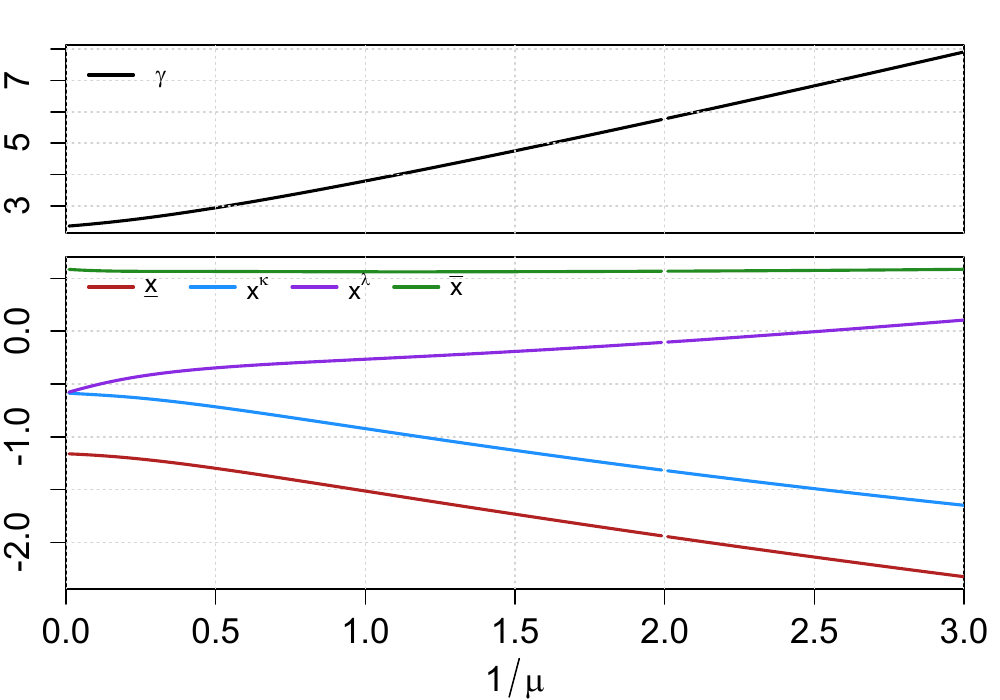}
			\caption{$c_U = 1$, $c_D = 2$}
		\end{subfigure}
		
		\caption{\small Effect of the mean jump size $1/\mu=-\Esp[Y]\in(0,3)$ on the reflecting barriers and ambiguity thresholds.}
		\label{fig:comparative_statistics_jump-mean}
	\end{figure}
	
	\paragraph{Effect of the intervention costs $\pmb{c_U}$ and $\pmb{c_D}$.}
	Figure~\ref{fig:comparative_statistics_pushing-costs} shows that the effect of the intervention costs depends on the direction in which the uncontrolled system tends to drift. 
	When $b=2$, the process naturally moves toward high states, so upward intervention at the lower barrier is relatively rare. Increasing $c_U$ then typically lowers the lower barrier, reducing the need for expensive upward corrections. 
	When $b=-2$, the system tends to remain in low states, so upward intervention becomes frequent and harder to avoid. In this case the controller keeps the lower barrier closer to the running-cost minimizer and relaxes the upper barrier instead, in order to reduce future costly adjustments. 
	The effect of varying $c_D$ is analogous, with the roles of the upper and lower interventions reversed. 
	In all cases, the ambiguity thresholds adjust consistently with these changes, tending to expand the region over which the least-favorable regime is active when the corresponding intervention becomes more expensive.
	
	\begin{figure}[!ht]
		\captionsetup[sub]{labelformat=parens}
		\centering
		
		\begin{subfigure}[t]{0.24\textwidth}\centering
			\includegraphics[width=\linewidth]{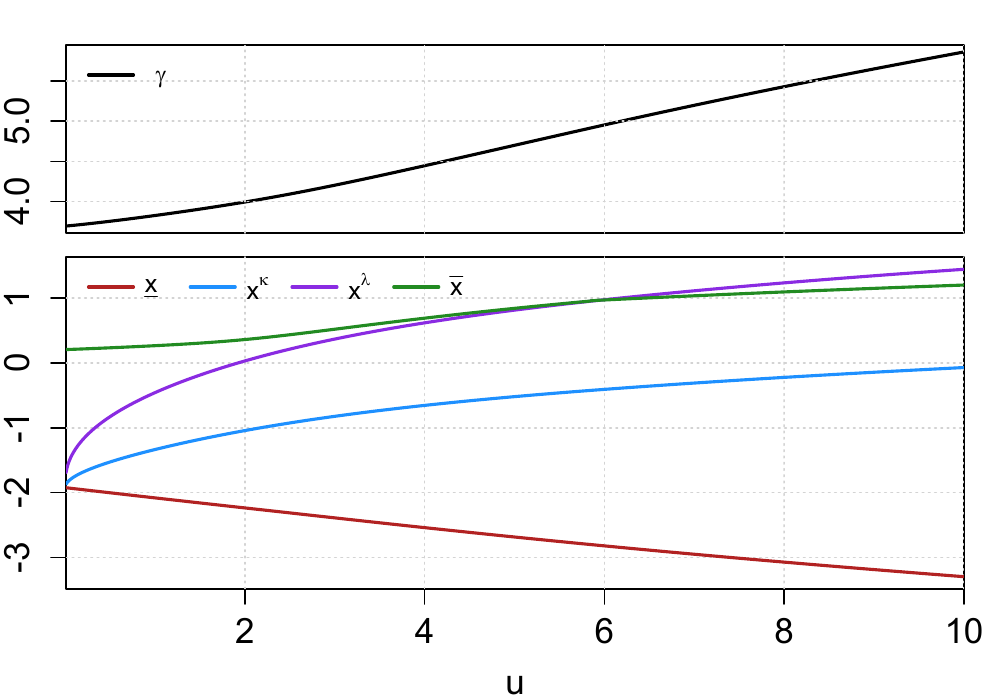}
			\caption{$b = 2$}
		\end{subfigure}%
		\begin{subfigure}[t]{0.24\textwidth}\centering
			\includegraphics[width=\linewidth]{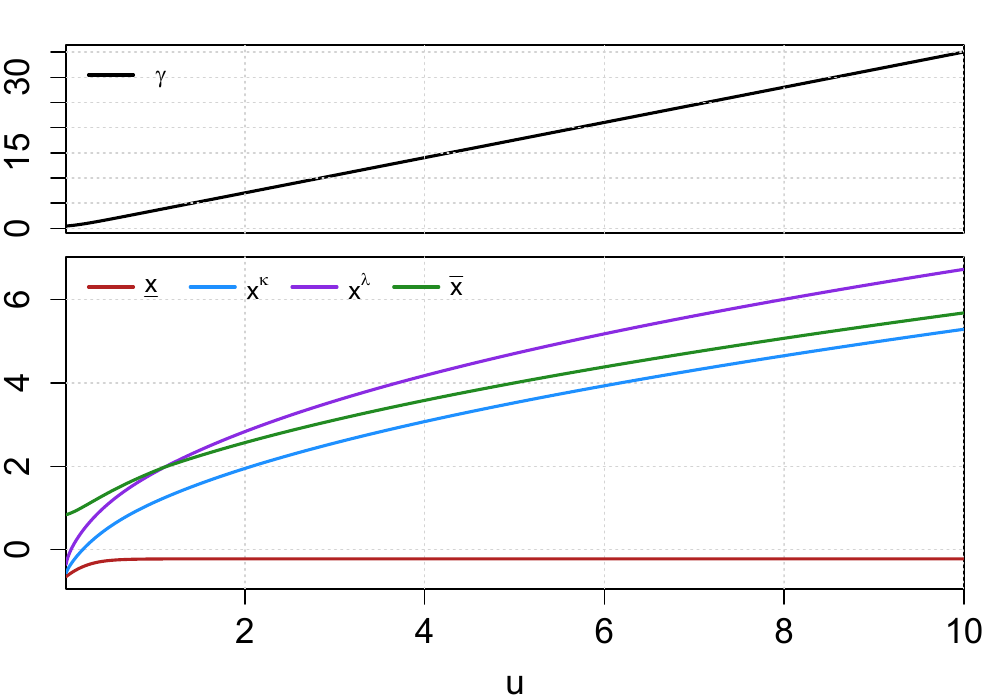}
			\caption{$b = -2$}
		\end{subfigure}%
		\begin{subfigure}[t]{0.24\textwidth}\centering
			\includegraphics[width=\linewidth]{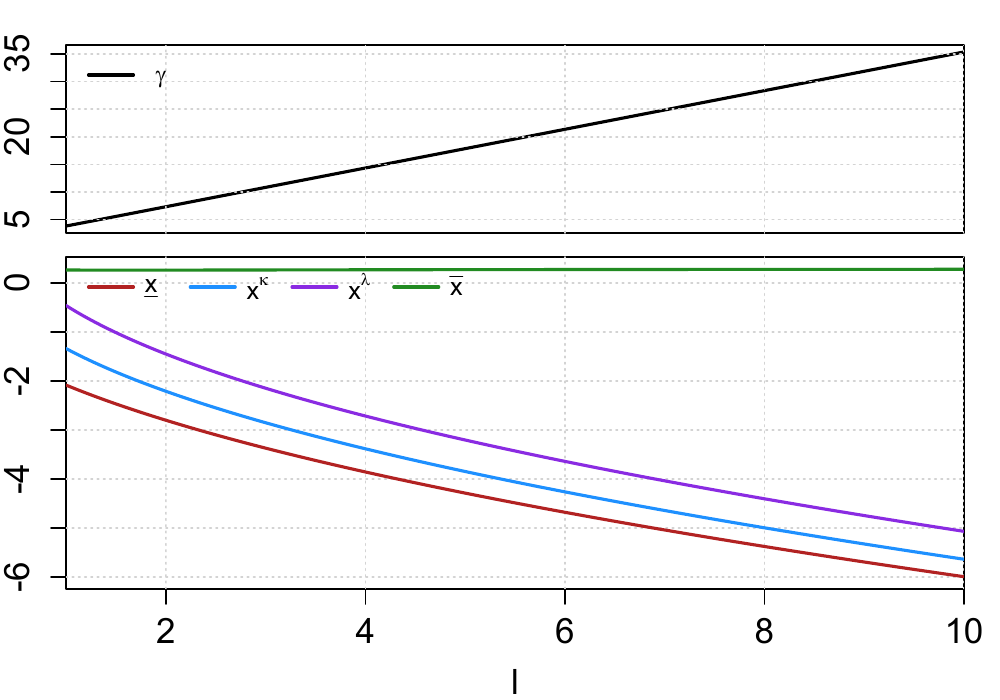}
			\caption{$b = 2$}
		\end{subfigure}
		\begin{subfigure}[t]{0.24\textwidth}\centering
			\includegraphics[width=\linewidth]{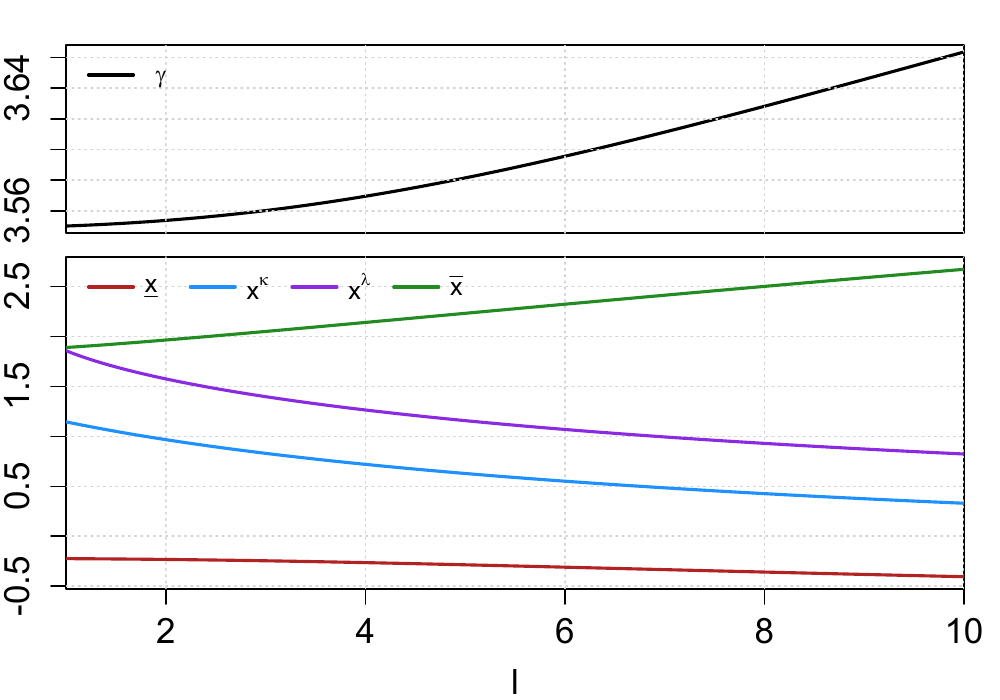}
			\caption{$b = -2$}
		\end{subfigure}
		
		\caption{\small Effect of the intervention costs: in (a) and (b) $c_U\in(0,10)$ and $c_D=1$, while in (c) and (d) $c_D\in(1,10)$  and $c_U=1$.}
		\label{fig:comparative_statistics_pushing-costs}
	\end{figure}
	
	Taken together, these experiments show that model ambiguity and jump risk systematically reshape the geometry of the optimal long-run regulation policy. Robustness operates as an endogenous caution mechanism: it modifies the width and asymmetry of the inaction region so as to reduce exposure to unfavorable dynamics and costly future interventions. The numerical evidence therefore complements the analytical characterization by showing how drift uncertainty, jump uncertainty, volatility, and cost asymmetries interact in determining economically meaningful long-run regulation rules.
	
	\paragraph{Value of robustness.}
	To quantify the cost of ignoring ambiguity, we compare the robust ergodic value with the worst-case performance of the policy that is optimal under the reference (non-robust) model. 
	More precisely, let $\gamma^{\mathrm{R}}$ denote the minimal long-run average cost in the robust problem defined at \eqref{eq:ergodic_singular_control_problem}, and let $\gamma^{\mathrm{NR},\, \mathrm{WC}}$ denote the long-run average cost obtained when the non-robust optimal policy is evaluated under the worst-case model. The Relative Misspecification Cost (RMC) is therefore given by
	\begin{align*}
		\mathrm{RMC}
		\defeq
		\frac{\gamma^{\mathrm{NR},\, \mathrm{WC}}-\gamma^{R}}{\gamma^{\mathrm{R}}} \times 100.    
	\end{align*}
	which measures the percentage performance loss generated by ignoring ambiguity. Notice that the RMC is non-negative, since
	\begin{align*}
		\gamma^{\mathrm{NR},\, \mathrm{WC}}
		&\defeq
		\sup_{(\kappa,\lambda)\in\Lambda}
		J_x(U^{\mathrm{NR}},D^{\mathrm{NR}},\kappa,\lambda) 
		\geq 
		\inf_{(U,D)\in\mathcal{A}}\sup_{(\kappa,\lambda)\in\Lambda}
		J_x(U,D,\kappa,\lambda) 
		= \gamma^\mathrm{R} \geq 0,
	\end{align*}
	where $(U^{\mathrm{NR}},D^{\mathrm{NR}})$ is the optimal policy for the reference non-robust model, namely,
	\[
	(U^{\mathrm{NR}},D^{\mathrm{NR}})
	=
	\arg\min_{(U,D)\in\mathcal{A}} J_x(U,D,0,r),
	\]
	where, with a slight abuse of notation, $0$ and $r$ denote the constant distortion functions $\kappa(\cdot)\equiv 0$ and $\lambda(\cdot)\equiv r$.

	Table~\ref{tab:rmc_table_b_comparison} and figures~\ref{fig:rmc_surface} and \ref{fig:rmc_2d-plot} report the corresponding RMC values for the set of parameters
	\[
	b \in \{-2,2\},\qquad r=1,\qquad \sigma=1,\qquad \mu=1,\qquad c_U = 1,\qquad c_D = 1.
	\]
	The results show a non-negative RMC that generally increases with respect to each ambiguity parameter. Hence, ignoring ambiguity is always costly, and large losses arise when overall ambiguity is high, although the leading source of ambiguity driving the RMC increase changes between the two drift regimes. 
	
	The two drift specifications differ substantially in quantitative terms. 
	When $b=2$, the RMC surface is noticeably steeper and reaches much larger values, showing that the value of robustness is substantially higher under a positive underlying drift. 
	When $b=-2$, the same monotone pattern remains, but the RMC is considerably smaller. 
	In both cases, the effect of one ambiguity source becomes more pronounced as the other increases, revealing a reinforcing interaction between drift and intensity ambiguity. 
	However, the sensitivity of the RMC with respect to the two ambiguity sources differs across the two drift regimes: the RMC appears more sensitive to intensity ambiguity when $b=2$, whereas for $b=-2$ it appears more sensitive to drift ambiguity.
	
	\begin{table}[htbp]
		\centering
		\caption{RMC as a function of $\delta$ and $\varepsilon$ for $b=2$ and $b=-2$.}
		\label{tab:rmc_table_b_comparison}
		\small
		\setlength{\tabcolsep}{3.5pt}
		\begin{tabular}{@{}c*{6}{r}*{6}{r}@{}}
			\toprule
			& \multicolumn{6}{c}{$b=2$} & \multicolumn{6}{c}{$b=-2$} \\
			\cmidrule(lr){2-7} \cmidrule(lr){8-13}
			$\delta \backslash \varepsilon$
			& 0.0 & 0.2 & 0.4 & 0.6 & 0.8 & 1.0
			& 0.0 & 0.2 & 0.4 & 0.6 & 0.8 & 1.0 \\
			\midrule
			0.0 & 0.00 & 0.07 & 0.28 & 0.66 & 1.23 & 2.00
			& 0.00 & 0.01 & 0.02 & 0.05 & 0.07 & 0.10 \\
			0.2 & 0.01 & 0.12 & 0.37 & 0.78 & 1.39 & 2.22
			& 0.06 & 0.09 & 0.12 & 0.16 & 0.18 & 0.21 \\
			0.4 & 0.03 & 0.17 & 0.45 & 0.89 & 1.52 & 2.37
			& 0.20 & 0.24 & 0.27 & 0.30 & 0.32 & 0.34 \\
			0.6 & 0.06 & 0.23 & 0.53 & 0.98 & 1.61 & 2.47
			& 0.38 & 0.41 & 0.43 & 0.45 & 0.47 & 0.48 \\
			0.8 & 0.10 & 0.29 & 0.59 & 1.05 & 1.67 & 2.52
			& 0.56 & 0.58 & 0.59 & 0.60 & 0.60 & 0.60 \\
			1.0 & 0.13 & 0.34 & 0.65 & 1.10 & 1.72 & --
			& 0.73 & 0.73 & 0.73 & 0.73 & 0.72 & -- \\
			\bottomrule
		\end{tabular}
	\end{table}
	
	\begin{figure}[!ht]
		\captionsetup[sub]{labelformat=parens}
		\centering
		
		\begin{subfigure}[t]{0.47\textwidth}
			\centering
			\includegraphics[width=\linewidth]{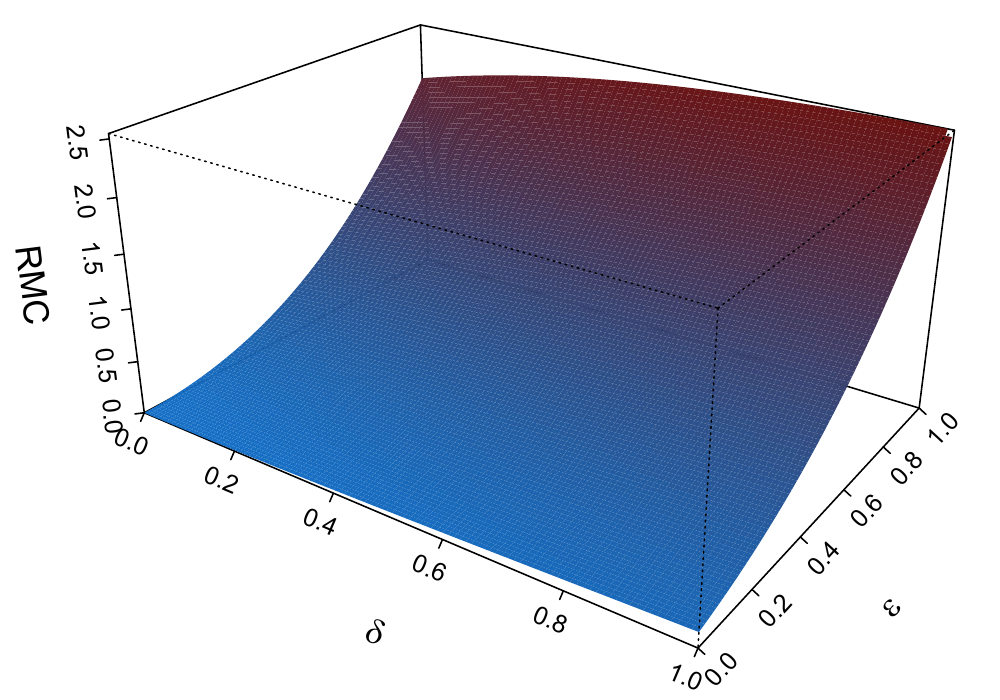}
			\caption{\small $b = 2$.}
		\end{subfigure}\hspace{2pt}
		\begin{subfigure}[t]{0.47\textwidth}
			\centering
			\includegraphics[width=\linewidth]{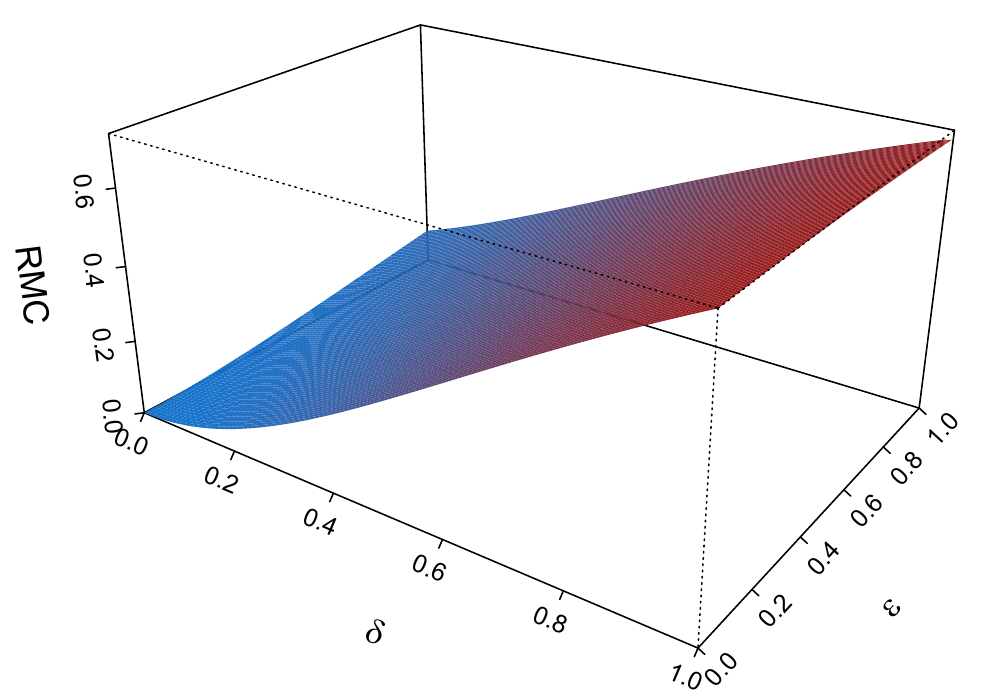}
			\caption{\small $b = -2$}
		\end{subfigure}\hspace{2pt}
		
		\caption{Relative misspecification cost (RMC) as a function of the drift-ambiguity parameter $\delta$ and the intensity-ambiguity parameter $\varepsilon$, for two values of the baseline drift.
		}
		\label{fig:rmc_surface}
	\end{figure}
	
	\begin{figure}[!ht]
		\captionsetup[sub]{labelformat=parens}
		\centering
		
		\begin{subfigure}[t]{0.32\textwidth}
			\centering
			\includegraphics[width=\linewidth]{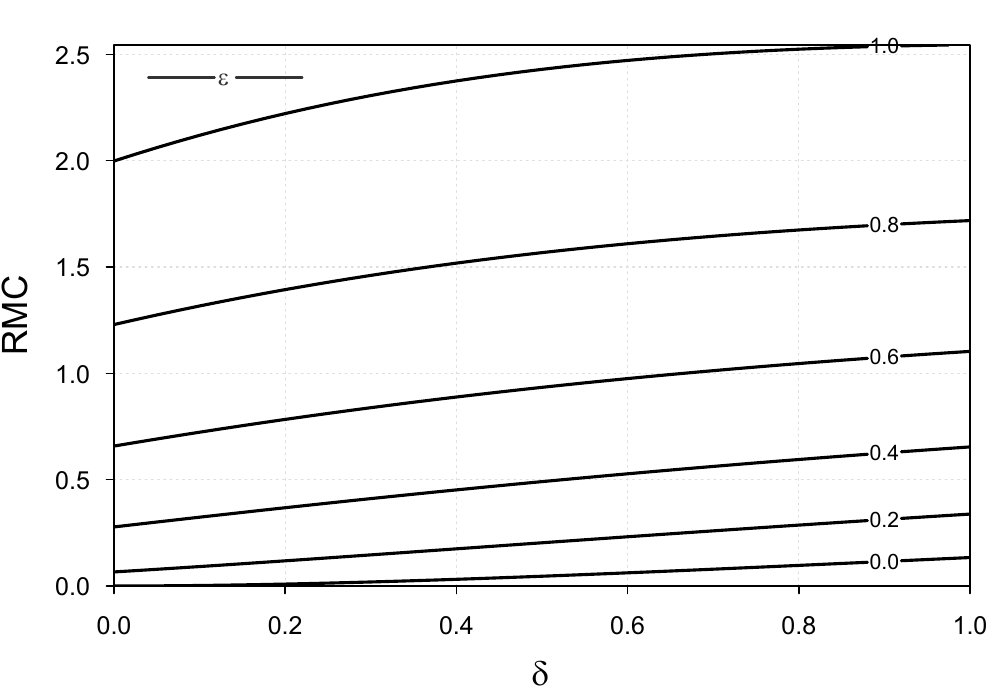}
			\caption{\small RMC as a function of $\delta$ for fixed values of $\varepsilon$, with $b = 2$.
			}
		\end{subfigure}\hspace{2pt}
		\begin{subfigure}[t]{0.32\textwidth}
			\centering
			\includegraphics[width=\linewidth]{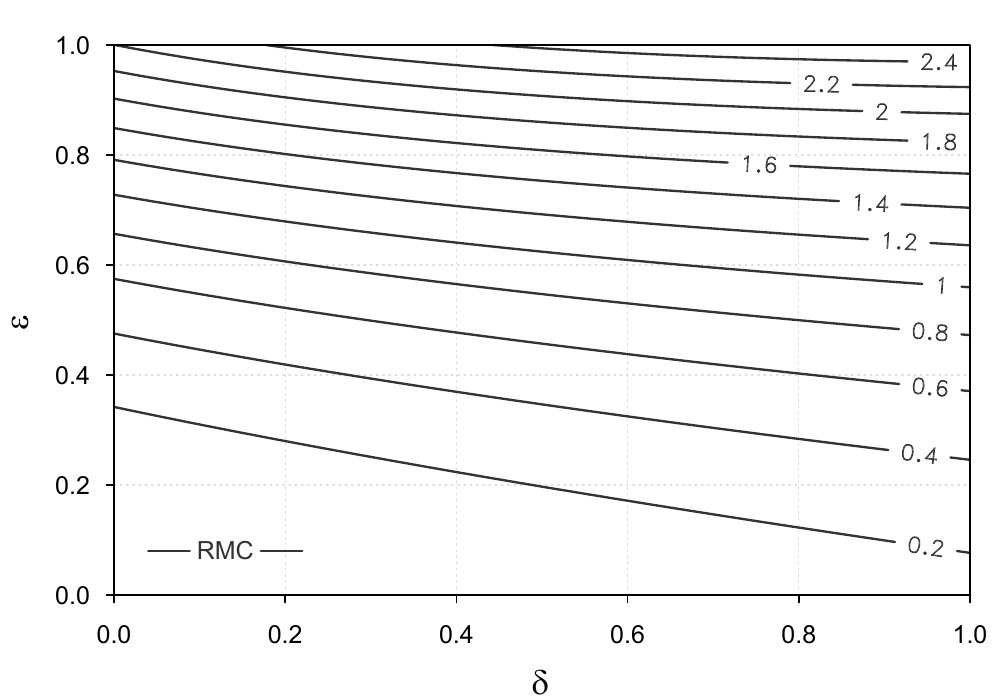}
			\caption{\small Contour plot of the RMC over $(\delta,\varepsilon)$, with $b = 2$.
			}
		\end{subfigure}\hspace{2pt}
		\begin{subfigure}[t]{0.32\textwidth}
			\centering
			\includegraphics[width=\linewidth]{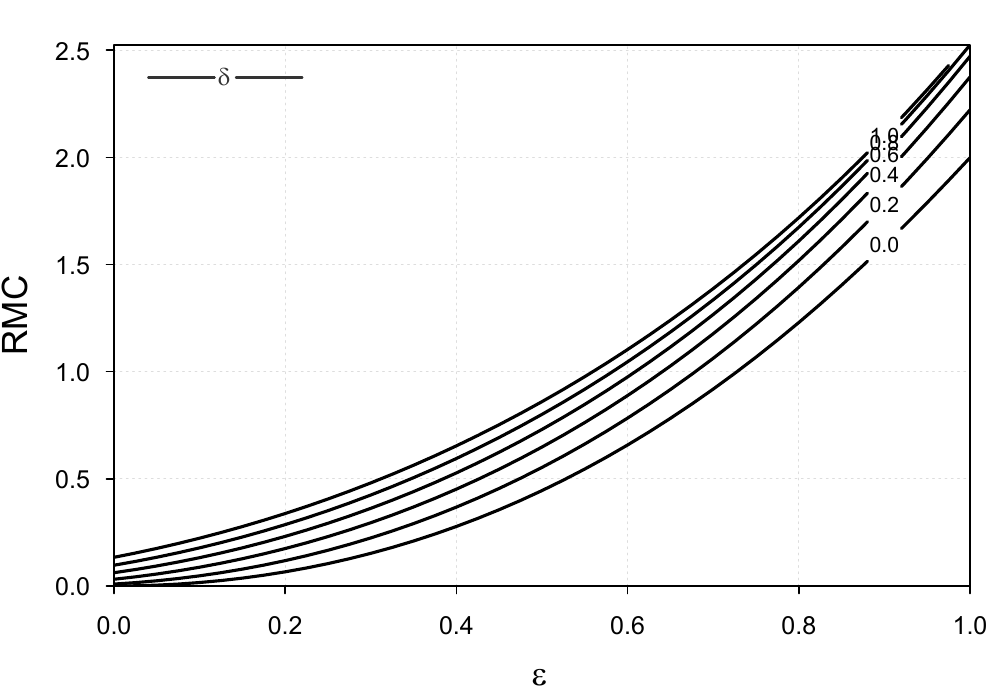}
			\caption{\small RMC as a function of the $\varepsilon$ for fixed values of $\delta$, with $b = 2$.
			}
		\end{subfigure}\hspace{2pt}
		
		\begin{subfigure}[t]{0.32\textwidth}
			\centering
			\includegraphics[width=\linewidth]{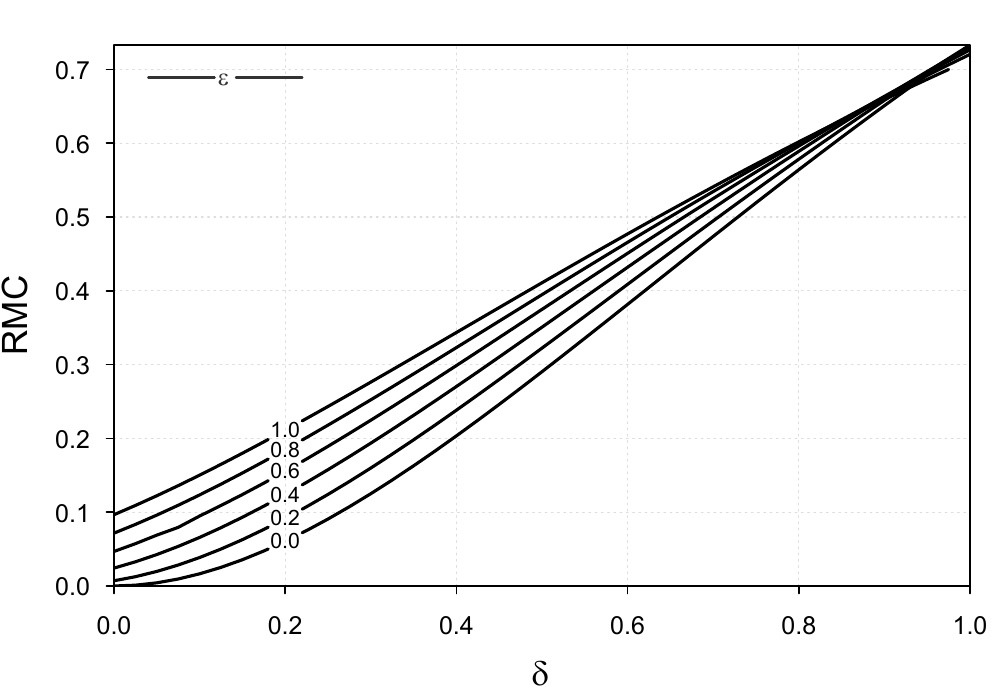}
			\caption{\small RMC as a function of $\delta$ for fixed values of $\varepsilon$, with $b = -2$.
			}
		\end{subfigure}\hspace{2pt}
		\begin{subfigure}[t]{0.32\textwidth}
			\centering
			\includegraphics[width=\linewidth]{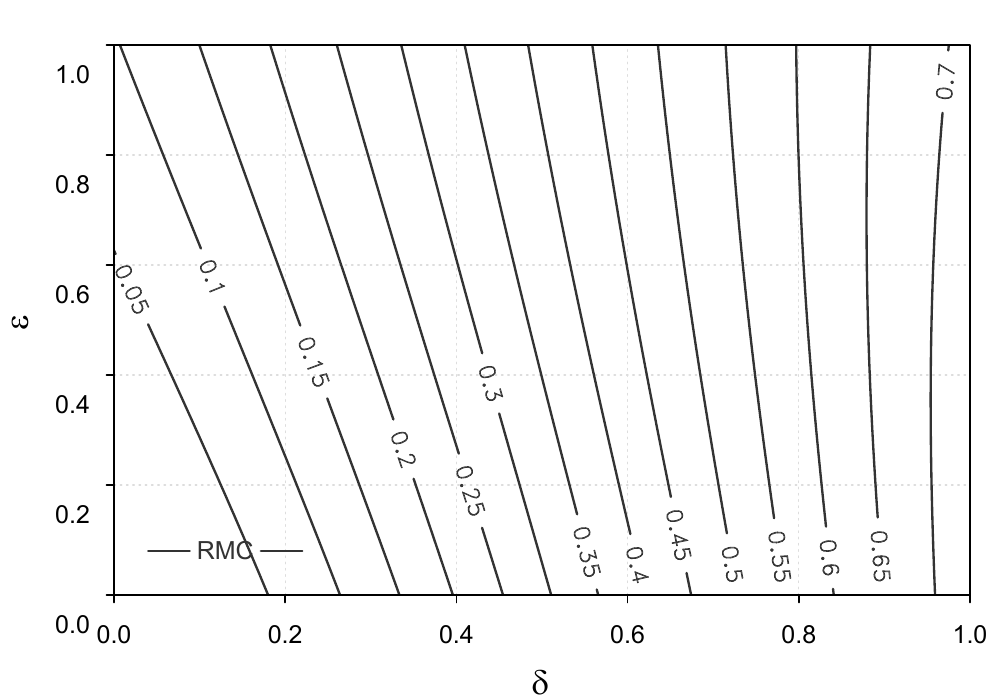}
			\caption{\small Contour plot of the RMC over $(\delta,\varepsilon)$, with $b = -2$.
			}
		\end{subfigure}\hspace{2pt}
		\begin{subfigure}[t]{0.32\textwidth}
			\centering
			\includegraphics[width=\linewidth]{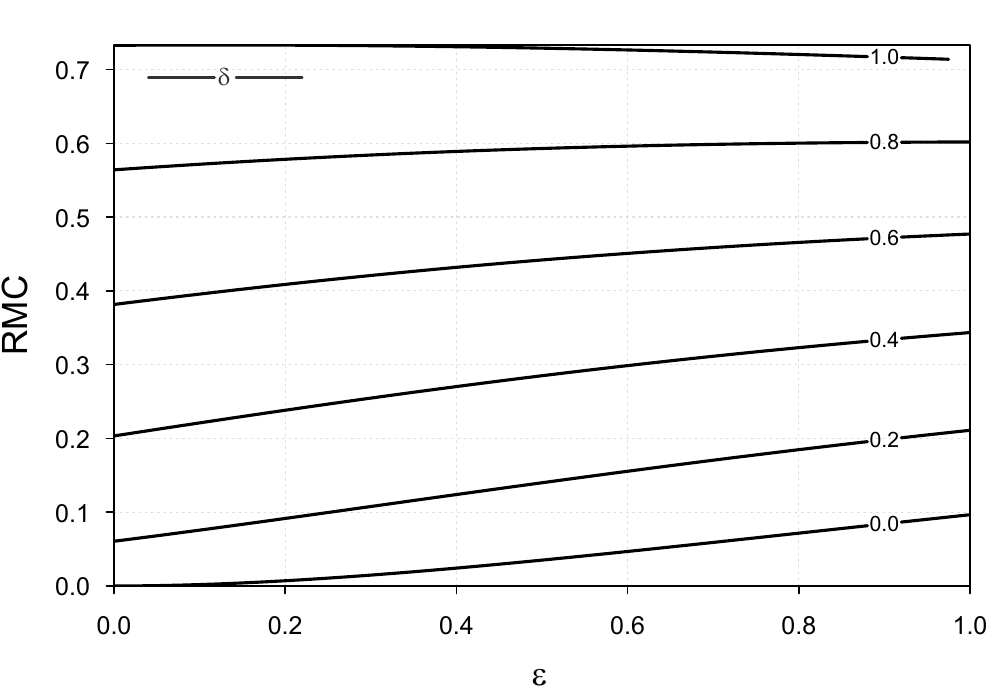}
			\caption{\small RMC as a function of the $\varepsilon$ for fixed values of $\delta$, with $b = -2$.
			}
		\end{subfigure}\hspace{2pt}
		
		\caption{Cross-sectional views of the relative misspecification cost (RMC). Panels~(a)--(c) correspond to $b=2$, while panels~(d)--(f) correspond to $b=-2$.
		}
		\label{fig:rmc_2d-plot}
	\end{figure}
	
	%-------------------------------------------------%
	\section{Concluding remarks}
	%-------------------------------------------------%
	
	We have studied a robust ergodic singular control problem for jump-diffusion systems with ambiguity in both the drift and the jump intensity. 
	Our analysis shows that the associated max--min control problem admits a tractable characterization in terms of a nonlinear integro-differential free-boundary problem, whose solution yields a reflecting-barrier policy and bang-bang worst-case distortions.
	
	From an operational perspective, the results highlight the importance of accounting for model uncertainty in long-run regulation problems. 
	The numerical analysis shows that ambiguity affects not only the width but also the asymmetry of the inaction region, leading to more conservative and state-dependent intervention policies.
	
	In addition, the comparison of ergodic costs provides a quantitative measure of the value of robustness. We show that ignoring ambiguity can lead to non-negligible performance losses, and that the dominant source of misspecification cost depends on the drift regime. In the reported experiments, intensity ambiguity plays the larger role when the baseline drift is positive, while drift ambiguity is more influential when the baseline drift is negative. Thus, the value of robustness reflects the interaction between persistent directional forces and uncertainty about the frequency of jumps.
	
	These findings suggest that neglecting uncertainty, especially in the presence of persistent directional misspecification, may result in systematically suboptimal decisions. 
	Future research could extend the analysis to multidimensional systems, alternative ambiguity specifications, or impulse-control formulations.
	
	\section*{Companion code} the code used to support the numerical experiments an be found at \url{https://github.com/aguazz/robust_ergodic-singular-control_jump-diffusion}.
	
	\section*{Funding}
	The second author acknowledges the financial support by the Italian SID project BIRD239937/23. The third author acknowledges the financial support by the Deutsche Forschungsgemeinschaft (DFG, German Research Foundation) under Germany's Excellence Strategy, Project-ID 317210226, SFB 1283.
	
	\section*{Acknowledgments}
	This work started during the visits at the Center for Mathematical Economics (IMW) at Bielefeld University of Bernardo D'Auria and Abel Guada Azze, who thank IMW and the SFB 1283 for the support and hospitality.

	%%%%%%%%%%%%%%%%%%%%%%%%%%%

	\appendix
	\section{Some Proofs}
	\label{app:someproofs}
	
	\subsection{Proof of Lemma \ref{lm:control_ergodic_bounds}}
	
	Define the midpoint and half-width $m \defeq (x_1+x_2)/2$ and $h \defeq (x_2-x_1)/2$,
	and consider the function $f(x)\defeq (x-m)^2$, for which $f'(x)=2(x-m)$ and $f''(x)=2$.
	
	From \eqref{eq:inf_gen}, we have for $x\in\R$,
	\begin{align*}
		(\InfGen^{\kappa,\lambda} f)(x)
		&= (b(x) + \sigma\kappa(x) - r\Esp[Y])2(x-m) + \sigma^2 + \lambda(x)(\Psi f)(x),
	\end{align*}
	where
	\begin{align*}
		(\Psi f)(x)
		&= \int_\R \lrp{(x+y-m)^2 - (x-m)^2}F(\rmd y)
		= \int_\R \lrp{2(x-m)y + y^2}F(\rmd y) \\
		&= 2(x-m)\Esp[Y] + \Esp[Y^2].
	\end{align*}
	Hence,
	\begin{align*}
		(\InfGen^{\kappa,\lambda} f)(x)
		&= 2(x-m)\lrp{b(x) + \sigma\kappa(x) - (r-\lambda(x))\Esp[Y]} + \sigma^2 + \lambda(x)\Esp[Y^2].
	\end{align*}
	
	Let $\oline{X}_t \defeq X_t^{x_1,x_2}$. Since $\oline{X}_t\in[x_1,x_2]$ $\Pr^{\kappa,\lambda}$-a.s. for all $t$, we have $|\oline{X}_t-m|\le h$.
	Using $|\kappa|\le \delta$, $|r-\lambda|\le \varepsilon r$, and $|\Esp[Y]|\le \Esp[|Y|]$, we obtain for all $x\in[x_1,x_2]$,
	\begin{align}\label{eq:InfGen_bound_band}
		|(\InfGen^{\kappa,\lambda} f)(\oline{X}_t)|
		&\leq (x_2-x_1)K(x_1,x_2). 
	\end{align}
	
	Use Itô's Lemma for semimartingales (see, e.g., Theorem II.32 from \cite{protter_stochastic_2005}) to $f(\oline{X}_T)$ and take $\Pr_x^{\kappa,\lambda}$-expectation with $x\in[x_1,x_2]$
	to vanish the diffusion and jump martingale terms (see also equation 8.2.6 from \cite{OksendalSulem2019}), obtaining the Dynkin-type formula:
	\begin{align}\label{eq:Dynkin_band}
		&\Esp[f(\oline{X}_T)]_x^{\kappa,\lambda} - f(x) \\
		&=
		\Esp[
		\int_0^T (\InfGen^{\kappa,\lambda}f)(\oline{X}_t)\,\rmd t
		+ \int_0^T f'(\oline{X}_t)\,\rmd U_t^{x_1,c} - \int_0^T f'(\oline{X}_t)\,\rmd D_t^{x_2,c}
		+ \sum_{0<t\le T}\Ind_{\{\Delta U_t^{x_1}+\Delta D_t^{x_2}>0\}}\lrp{f(\oline{X}_t)-f(\widehat{X}_t)}
		]_x^{\kappa,\lambda},
		\nonumber
	\end{align}
	where $U_t^{x_1,c}$ and $D_t^{x_2,c}$ denote the continuous parts of $U_t^{x_1}$ and $D_t^{x_2}$, respectively, and $\widehat{X}_t \defeq \oline{X}_{t-}+\Delta J_t$ is the pre-controlled post-jump state, with $\Delta J_t$ the Poisson jump at time $t$.
	
	Since $\rmd U_t^{x_1,c}$ and $\rmd D_t^{x_2,c}$ are activated only when $\oline{X}_t=x_1$ and $\oline{X}_t=x_2$, respectively, we have
	\begin{equation}\label{eq:cont_contribution_band}
		\left\{
		\begin{aligned}
			\int_0^T f'(\oline{X}_t)\,\rmd U_t^{x_1,c}
			&= \int_0^T 2(\oline{X}_t-m)\,\rmd U_t^{x_1,c}
			= 2(x_1-m)U_T^{x_1,c}
			= -(x_2-x_1)U_T^{x_1,c}, \\
			-\int_0^T f'(\oline{X}_t)\,\rmd D_t^{x_2,c}
			&= -\int_0^T 2(\oline{X}_t-m)\,\rmd D_t^{x_2,c}
			= -2(x_2-m)D_T^{x_2,c}
			= -(x_2-x_1)D_T^{x_2,c}.
		\end{aligned}
		\right.
	\end{equation}
	
	For the jump correction term, notice that if $\widehat{X}_t<x_1$ then $\Delta U_t^{x_1}=x_1-\widehat{X}_t>0$ and $\oline{X}_t=x_1$. Hence, using $x_1-m=-h$,
	\begin{align*}
		f(\oline{X}_t)-f(\widehat{X}_t) 
		&= f(x_1)-f(x_1-\Delta U_t^{x_1})
		= h^2-(h+\Delta U_t^{x_1})^2 \\
		&= -\Delta U_t^{x_1}\lrp{\Delta U_t^{x_1}+2h}\le -(2h)\Delta U_t^{x_1} = -(x_2-x_1)\Delta U_t^{x_1}.
	\end{align*}

	Similarly, if $\widehat{X}_t>x_2$ then $\Delta D_t^{x_2}=\widehat{X}_t-x_2>0$ and $\oline{X}_t=x_2$. Consequently, using $x_2-m=h$,
	\begin{align*}
		f(\oline{X}_t)-f(\widehat{X}_t) 
		&= f(x_2)-f(x_2+\Delta D_t^{x_2})
		= h^2-(h+\Delta D_t^{x_2})^2 \\
		&= -\Delta D_t^{x_2}\lrp{\Delta D_t^{x_2}+2h}\le -(2h)\Delta D_t^{x_2} 
		= -(x_2-x_1)\Delta D_t^{x_2}.
	\end{align*}
	Therefore,
	\begin{align}\label{eq:jump_contribution_band}
		\sum_{0<t\le T}\Ind_{\{\Delta U_t^{x_1}+\Delta D_t^{x_2}>0\}}\lrp{f(\oline{X}_t)-f(\widehat{X}_t)}
		\le -(x_2-x_1)\sum_{0<t\le T}\lrp{\Delta U_t^{x_1}+\Delta D_t^{x_2}}
		= -(x_2-x_1)\lrp{U_T^{x_1,d}+D_T^{x_2,d}},
	\end{align}
	where $U^{x_1,d}$ and $D^{x_2,d}$ denote the purely discontinuous parts.
	
	Plugging \eqref{eq:cont_contribution_band} and \eqref{eq:jump_contribution_band} into \eqref{eq:Dynkin_band}, and merging continuous and jump parts, we get
	\begin{align}\label{eq:ergodic_control_bound_auxiliary}
		(x_2-x_1)\Esp[U_T^{x_1}+D_T^{x_2}]_x^{\kappa,\lambda}
		&\leq f(x) - \Esp[f(\oline{X}_T)]_x^{\kappa,\lambda}
		+ \Esp[\int_0^T (\InfGen^{\kappa,\lambda}f)(\oline{X}_t)\,\rmd t]_x^{\kappa,\lambda}.
	\end{align}
	Since $\oline{X}_t\in[x_1,x_2]$, we have $0\le f(\oline{X}_T)\le h^2$, hence $0\leq \Esp[f(\oline{X}_T)]\leq h^2$.
	Hence, \eqref{eq:ergodic_control_bound} is obtained after 
	dividing both sides in \eqref{eq:ergodic_control_bound_auxiliary} by $(x_2-x_1)T$, using \eqref{eq:InfGen_bound_band}, and letting $T\to\infty$.
	For $x\notin[x_1,x_2]$, the reflectors apply an initial projection to $\hat{x} \in [x_1,x_2]$, that is, $\hat{x} = x + \Delta U_0^{x_1} - \Delta D_0^{x_2}$.  Hence, 
	\begin{align*}
		\Esp[U_T^{x_1}+D_T^{x_2}]_x^{\kappa,\lambda} = (\Delta U_0^{x_1} + \Delta D_0^{x_2}) + \Esp[U_T^{x_1}+D_T^{x_2}]_{\hat{x}}^{\kappa,\lambda},
	\end{align*}
	making $\limsup_{T\to\infty}T^{-1}\Esp[U_T^{x_1}+D_T^{x_2}]_x^{\kappa,\lambda} = \limsup_{T\to\infty}T^{-1}\Esp[U_T^{x_1}+D_T^{x_2}]_{\hat{x}}^{\kappa,\lambda}$.
	
	\subsection{Proof of Proposition \ref{pr:ergodic-value_upper_bound}}

	The proof follows immediately after using the suboptimality of the Skorokhod reflectors $(U^{x_1}, D^{x_2})$ in \eqref{eq:ergodic_singular_control_problem}, the ergodic control bound \eqref{eq:ergodic_control_bound} in Lemma \ref{lm:control_ergodic_bounds}, and the fact that the controlled process $Z_t = X_t^{U^{x_1},D^{x_2}}$ is constrained to $[x_1,x_2]$. That is, for all $(\kappa,\lambda)$ and for all $x \in \R$, $\Esp[\frac{1}{t}\int_0^t c(Z_s)\,\rmd s]_x^{\kappa,\lambda} \leq \max_{x\in[x_1,x_2]} c(x)$ and 
	\begin{align}
		\frac{1}{t}\Esp[c_U U_t^{x_1} + c_D D_t^{x_2}]_x^{\kappa,\lambda} \leq (c_U + c_D)K(x_1,x_2),
	\end{align}
	which results in
	\begin{align*}
		\gamma \leq \sup_{(\kappa,\lambda)\in\Lambda} J_x(U^{x_1},D^{x_2},\kappa,\lambda) \leq \max_{x\in[x_1,x_2]} c(x) + (c_U + c_D)K(x_1,x_2).
	\end{align*}
	
	\subsection{Proof of Theorem \ref{thm:verification_theorem}}
	
	Let $\oline{\gamma}\defeq \inf_{(U,D)\in\cA} \sup_{(\kappa,\lambda)\in\Lambda} J_x(U,D,\kappa,\lambda)$
	and $\uline{\gamma}\defeq \sup_{(\kappa,\lambda)\in\Lambda}\inf_{(U,D)\in\cA} J_x(U,D,\kappa,\lambda)$.
	We show first that $\gamma \leq \uline{\gamma}$ and then that $\gamma \geq \oline{\gamma}$, implying that $\uline{\gamma}=\gamma=\oline{\gamma}$.
	\begin{itemize}[label=$\triangleright$]
		\item \emph{$\gamma \leq \uline{\gamma}$}: 
		Take $\kappa^*(x)$ and $\lambda^*(x)$ defined in \eqref{eq:bang-bang_opt} and let $\Pr_x^* = \Pr_x^{\kappa^*, \lambda^*}$, $\Esp_x^* = \Esp_x^{\kappa^*, \lambda^*}$ and $\InfGen^* = \InfGen^{\kappa^*,\lambda^*}$. For $(U,D)\in\cA$, denote $\oline{X}_t = X_t^{U,D}$. 
		Since $V\in C^2(\R)$ (recall \eqref{eq:FBP_C2}), we can use Dynkin formula \eqref{eq:Dynkin_band} for $V$ instead of $f$ to get that
		\begin{align}\label{eq:Dynkin_optimal_ambiguity} 
			&\Esp[V(\oline{X}_T)]_x^* - V(x) \\ 
			&= 
			\Esp[\int_0^T (\InfGen^*V)(\oline{X}_t) \,\rmd t + \int_0^T V'(\oline{X}_t)\,\rmd U_t^c - \int_0^T V'(\oline{X}_t)\,\rmd D_t^c
			+ \sum_{0<t\leq T}\Ind_{\{\Delta U_t + \Delta D_t > 0\}}(V(\oline{X}_t) - V(\oline{X}_{t-} + \Delta J_t))]_x^*, \nonumber \\
			&= 
			\Esp[\int_0^T \lrp{c(\oline{X}_t) + (\InfGen^*V)(\oline{X}_t) - \gamma}\,\rmd t]_x^* 
			+ \gamma T - \Esp[\int_0^T c(\oline{X}_t)\,\rmd t + \int_0^T -V'(\oline{X}_t)\,\rmd U_t^c + \int_0^T V'(\oline{X}_t)\,\rmd D_t^c]_x^* \nonumber \\
			&
			\hspace{0.4cm} + 
			\Esp[\sum_{0<t\leq T}\lrp{\Ind_{\{\Delta U_t > 0\}}\int_{\oline{X}_{t-} + \Delta J_t}^{\oline{X}_{t-} + \Delta J_t + \Delta U_t}V'(z)\,\rmd z 
				- 
				\Ind_{\{\Delta D_t > 0\}}\int_{\oline{X}_{t-} + \Delta J_t - \Delta D_t}^{\oline{X}_{t-} + \Delta J_t}V'(z)\,\rmd z 
			}]_x^*, \nonumber
		\end{align}
		where $\rmd U_t^c$ and $\rmd D_t^c$ are the continuous part of the Stieltjes measures associated $U_t$ and $D_t$, respectively, and with $\Delta J_t \defeq \Ind_{\{N_t - N_{t-} = 1\}}Y_{N_t}$ the Poisson jump.
		It follows from \eqref{eq:FBP_continuation}-\eqref{eq:FBP_upper_stopping} that
		\begin{align*}
			\Esp[V(\oline{X}_T)]_x^* - V(x) &\geq \gamma T - \Esp[\int_0^T c(\oline{X}_t)\,\rmd t + \int_0^T c_U \,\rmd U_t + \int_0^T c_D\,\rmd D_t]_x^*,   
		\end{align*}
		or, equivalently,
		\begin{align*}%\label{eq:gamma_inequality_leq}
			\gamma \leq \frac{1}{T}\lrp{\Esp[V(\oline{X}_T)]_x^* - V(x) + \Esp[\int_0^T c(\oline{X}_t)\,\rmd t + \int_0^T c_U \,\rmd U_t + \int_0^T c_D\,\rmd D_t]_x^*}.
		\end{align*}
		Taking $T \rightarrow \infty$, recalling the growth condition in \eqref{eq:admissible_controls}, and noticing that $|V(x)| \leq M(1 + |x|)$ for some $M > 0$ as $|V'(x)| \leq M$, one gets that
		\begin{align*}
			\gamma \leq \limsup_{T\rightarrow\infty}\frac{1}{T}\Esp[\int_0^T c(\oline{X}_t)\,\rmd t + c_U U_T +  c_D D_T]_x^* = J_x(U, D, \kappa^*, \lambda^*).
		\end{align*}
		Since $(U,D)$ was arbitrarily chosen from $\cA$, we get that
		\begin{align*}
			\gamma
			&\leq \inf_{(U,D)\in\cA}J_x(U, D, \kappa^*, \lambda^*) 
			\leq \sup_{(\kappa,\lambda)\in\Lambda}\inf_{(U,D)\in\cA}J_x(U, D, \kappa, \lambda) = \uline{\gamma}.
		\end{align*}
		
		\item \emph{$\gamma \geq \oline{\gamma}$}: 
		Consider the optimal controls $(U^*,D^*)$ defined in \eqref{eq:optimal_controls}, that is, the $\Pr^{\kappa,\lambda}$-Skorokhod reflectors that keeps the processes within the interval $[\uline{x}, \oline{x}]$ for all $(\kappa, \lambda) \in\Lambda$. Write $\oline{X}_t^* = X_t^{U^*,D^*}$. Reasoning as in \eqref{eq:Dynkin_optimal_ambiguity}, we get that
		\begin{align*}%\label{eq:Dynkin_optimal_control}
			&\Esp[V(\oline{X}_T^*)]_x^{\kappa,\lambda} - V(x) \\ 
			&= 
			\Esp[\int_0^T \lrp{c(\oline{X}_t^*) + (\InfGen^{\kappa,\lambda}V)(\oline{X}_t^*) - \gamma}\,\rmd t]_x^{\kappa,\lambda} 
			+ \gamma T - \Esp[\int_0^T c(\oline{X}_t^*)\,\rmd t + \int_0^T -V'(\oline{X}_t^*)\,\rmd U_t^{*,c} + \int_0^T V'(\oline{X}_t^*)\,\rmd D_t^{*,c}]_x^{\kappa,\lambda} \nonumber \\
			&
			\hspace{0.4cm} - 
			\Esp[\sum_{0<t\leq T}\lrp{\Ind_{\{\Delta U_t^* > 0\}}\int_{\oline{X}_{t-}^* + \Delta J_t}^{\oline{X}_{t-}^* + \Delta J_t + \Delta U_t^*}-V'(z)\,\rmd z 
				+ 
				\Ind_{\{\Delta D_t^* > 0\}}\int_{\oline{X}_{t-}^* + \Delta J_t - \Delta D_t^*}^{\oline{X}_{t-}^* + \Delta J_t}V'(z)\,\rmd z 
			}]_x^{\kappa,\lambda}. \nonumber
		\end{align*}
		Since $\Pr[\oline{X}_t^*\in [\uline{x}, \oline{x}]]_x^{\kappa,\lambda} = 1$, it follows from \eqref{eq:optimal_ambiguity_inner_max} and \eqref{eq:FBP_continuation} that $c(\oline{X}_t^*) + (\InfGen^{\kappa,\lambda}V)(\oline{X}_t^*) - \gamma \leq 0$, which, by also using \eqref{eq:FBP_stopping}-\eqref{eq:FBP_upper_stopping}, results in
		\begin{align*}%\label{eq:gamma_inequality_geq}
			\gamma \geq \frac{1}{T}\lrp{\Esp[V(\oline{X}_T^*)]_x^{\kappa,\lambda} - V(x) + \Esp[\int_0^T c(\oline{X}_t^*)\,\rmd t + \int_0^T c_U \,\rmd U_t^* + \int_0^T c_D\,\rmd D_t^*]_x^{\kappa,\lambda}}.
		\end{align*}
		Taking $T \rightarrow \infty$, noticing that $V(\oline{X}_T^*)$ is bounded as $V$ is continuous and $\oline{X}_T^*$ is bounded $\Pr_x^{\kappa,\lambda}$-a.s., and using the arbitrary selection of $(\kappa,\lambda)\in\Lambda$, we obtain that 
		\begin{align*}
			\gamma 
			&\geq \sup_{(\kappa,\lambda)\in\Lambda}J_x(U^*,D^*,\kappa,\lambda) 
			\geq \inf_{(U,D)\in\cA}\sup_{(\kappa,\lambda)\in\Lambda}J_x(U,D,\kappa,\lambda) = \oline{\gamma}.
		\end{align*}
	\end{itemize}
	
	\subsection{Proof of Proposition \ref{pr:ambg-drift_bang-bang}}

	Since $V'(\uline{x}) = -c_U < 0$ and $V'(\oline{x}) = c_D > 0$ (recall \eqref{eq:FBP_lower_stopping} and \eqref{eq:FBP_upper_stopping}), continuity and increasingness (due to strict convexity) on $(\uline{x}, \oline{x})$  of $V'$ (recall \eqref{eq:FBP_C2}) guarantees the existence of a unique point $x^\kappa\in (\uline{x}, \oline{x})$ such that $V'(x^\kappa) = 0$.
	
	\subsection{Proof of Proposition \ref{pr:ambg-intensity_bang-bang}}
	
	Due to \eqref{eq:FBP_C2} we get that $\Psi V \in C^1(\R)$ (actually $\Psi V \in C^2(\R)$, but $C^1$-regularity is enough for our matter in this proposition). Moreover, since $V'$ is bounded due to \eqref{eq:FBP_lower_stopping}-\eqref{eq:FBP_C2} and $\Esp[|Y|]$ is bounded, we can differentiate inside the integral sign to get
	$
	(\Psi V)'(x) = \int_\R \lrp{V'(x + y) - V'(x)}\,\rmd F(y)
	$.
	Then, the increasing property of $V'$ in $(\uline{x}, \oline{x})$ (coming from its strict convexity) and the negative support of $F$ guarantees that  $(\Psi V)'(x) < 0$ for all $x \in (\uline{x}, \infty)$. Actually, adding \eqref{eq:FBP_lower_stopping} and \eqref{eq:FBP_upper_stopping}, we get that $(\Psi V)'(x) \leq 0$ everywhere.
	
	Using \eqref{eq:FBP_lower_stopping} one obtains that $V(x) = V(\uline{x}) - c_U(x - \uline{x})$ for all $x < \uline{x}$, meaning that $(\Psi V)(x) = \Esp[V(x + Y) - V(x)] = -c_U\Esp[Y] > 0$ for all $x \leq \uline{x}$. 
	On the other hand, due to \eqref{eq:FBP_upper_stopping}, we get that, for all $y \leq 0$,
	\begin{align*}
		\lim_{x\to\infty} V(x + y) - V(x) = c_Dy.
	\end{align*}
	Hence, since $|V(x + y) - V(x)| \leq (c_U + c_D)|y|$ (which follows by the boundedness of $V'$ derived from \eqref{eq:FBP_lower_stopping}-\eqref{eq:FBP_C2}), the dominated convergence theorem ensures that 
	\begin{align*}
		\lim_{x\to\infty}(\Psi V)(x) 
		&= \lim_{x\to\infty} \Esp[V(x + Y) - V(x)] 
		= c_D\Esp[Y] < 0.
	\end{align*}
	Hence, due to the continuity and the decreasing property of $\Psi V$, there exists a unique $x^\lambda > \uline{x}$ such that $\Psi V$ changes sign around $x^\lambda$. The same continuity and monotonicity argument guarantees that $x^\lambda < \oline{x}$ if and only if condition \eqref{eq:xla_in_inaction_suf_cond} is met. Finally, since $V'(x) < 0$ for all $x < x^\kappa$, then $(\Psi V)(x) > 0$ on the same interval, meaning that $x^\lambda > x^\kappa$.  
	
	\subsection{Proof of Proposition \ref{pr:suf_condition_regime1}}

	Recalling \eqref{eq:inf_gen}, the free-boundary problem \eqref{eq:FBP_continuation}-\eqref{eq:FBP_C2}, and the fact that $\kappa^*(\oline{x}) = \delta$, we get the following expression for $(\Psi V)(\oline{x})$ after taking $x\uparrow \oline{x}$ in $(\InfGen V)(x) + c(x) - \gamma$:
	\begin{align*}
		(\Psi V)(\oline{x}) = \lrp{\gamma - c(\oline{x}) - c_D(b(\oline{x}) + \sigma\delta - r\Esp[Y])}/\lambda^*(\oline{x}).
	\end{align*}
	Since $\lambda^*(\oline{x}) \geq 0$, the sign of $(\Psi V)(\oline{x})$ coincides with that of the right-hand side numerator. Therefore, 
	\begin{align*}
		\gamma < \uline{c} + c_D(\uline{b} + \sigma\delta - r\Esp[Y]) 
		\Longrightarrow (\Psi V)(\oline{x}) < 0.
	\end{align*}
	The proof concludes after using the ergodic-value upper bound \eqref{eq:ergodic-value_upper_bound}.
	
	\subsection{Proof of Proposition \ref{pr:ergodic-value_upper_bound_parabolic-cost}}
	Let $C(x_1,x_2)=\max_{x\in[x_1,x_2]} x^2$, and rewrite 
	\begin{align*}
		\Gamma(x_1,x_2) = C(x_1,x_2) + (c_U + c_D)\lrp{K_1 + \frac{K_2}{x_2-x_1}},
	\end{align*} 
	for $K_1 \defeq |b| + \sigma\delta + \varepsilon r\Esp[|Y|]$ and $K_2 \defeq \sigma^2 + r(1+\varepsilon)\Esp[Y^2]$. Define the length $L\defeq x_2-x_1$. Notice that the second term defining $\Gamma(x_1,x_2)$ depends only on $L$.  Since $C(x_1,x_2)\ge C(-L/2,L/2)=L^2/4$,
	%On the other hand, the smallest possible value of $C(x_1,x_2)$ is achieved by centering the interval $[x_1,x_2]$, that is, by setting $[x_1,x_2] = [-L/2,L/2]$:
	%\begin{align*}
	%C(x_1,x_2) \defeq \max_{x\in[x_1,x_2]} x^2 = \max\{x_1^2,x_2^2\} \geq L^2/4.
	%\end{align*}
	we get that $\Gamma(-L/2,L/2) = \alpha K_1 + \phi(L)$,
	for $\alpha \defeq (c_U + c_D)$ and
	\begin{align*}
		\phi(L) \defeq L^2/4 + \alpha K_2/L.
	\end{align*}
	Setting $\phi'(L) = 0$ we get the minimizer $L^* = (2\alpha K_2)^{1/3}$, which yields the global minimum for $\Gamma(-L/2,L/2)$
	\begin{align*}
		\min_{x_1<x_2} \Gamma(x_1,x_2) = \min_{L>0}\Gamma(-L/2,L/2) = \alpha K_1 + \min_{L>0} \phi(L) = \alpha K_1 + \phi(L^*) 
		&= \alpha K_1 + 3\lrp{\alpha K_2/4}^{2/3}.
	\end{align*}
	The proof concludes after Proposition \ref{pr:ergodic-value_upper_bound}.
	
	\subsection{Proof of Lemma \ref{lm:Regime-1_parabolic_cost}}

	After Proposition \ref{pr:suf_condition_regime1}, and using that $c(x) = x^2 \geq 0$ as well as the constancy of the drift $b$, we know that Regime 1 holds if $\min_{x_1<x_2} \Gamma(x_1,x_2) < c_D(\delta\sigma + b - r\Esp[Y])$, which Proposition \ref{pr:ergodic-value_upper_bound_parabolic-cost} tells is exactly inequality \eqref{eq:suf_cond_regime1_parabolic-cost} for $\Esp[Y] = -1/\mu$ and $\Esp[Y^2] = 2/\mu^2$.
	
	To prove that the simpler condition \eqref{eq:suf_to_suf_regime1_condition} suffices for \eqref{eq:suf_cond_regime1_parabolic-cost}, set $y \defeq (r/\mu)^{1/3}$. Since $|b| = b$ under $b \geq 0$ from \eqref{eq:suf_to_suf_regime1_condition}, \eqref{eq:suf_cond_regime1_parabolic-cost} is equivalent to 
	\begin{align*}
		\hat{K}_1 y^3 \geq c_U(b + \delta\sigma) + 3\lrp{\frac{c_U + c_D}{4}(\sigma^2 + \frac{2(1+\varepsilon)}{\mu}y^3)}^{2/3},
	\end{align*}
	which, by using the concavity inequality $(u + v)^{2/3} \leq u^{2/3} + v^{2/3}$ for $u = \sigma^2$ and $v = 2(1+\varepsilon)y^3/\mu$, is implied by the inequality
	\begin{align}\label{eq:suf_regime1_auxiliary}
		\hat{K}_1y^3 &\geq \hat{K}_2 + \hat{K}_3y^2,
	\end{align}
	which holds true by taking $y > \max\{2\hat{K}_3/\hat{K}_1, (2\hat{K}_2/\hat{K}_1)^{1/3}\}$, since $\hat{K}_1 > 0$ under $\varepsilon < c_D/(c_D+c_U)$ from \eqref{eq:suf_to_suf_regime1_condition}.    
	
	\subsection{Proof of Lemma \ref{lm:uniquenes_solution_linear_systems}}
	
	To shorten notation, we prove uniqueness for the systems written in terms of
	the original coefficients $\mathbf{c}_{i}^{\pm}$.
	Let $\Theta_i=\alpha_{1,-}^{(i)}\alpha_{2,+}^{(i)}-\alpha_{1,+}^{(i)}\alpha_{2,-}^{(i)}$.
	
	\begin{itemize}[label=$\triangleright$]
		\item \emph{$\Theta_1$}:
		After some basic manipulations, and using
		$\frac{\mu\sigma^2}{2a_{1,1}} = \frac{\mu}{\rho_1^+\rho_1^-}$,
		the determinant $\Theta_1$ can be written as
		\details{
			\begin{align}
				\Theta_1
				&=
				e^{-(\rho_1^- + \rho_1^+)\uline{x}}\lrp{h(\rho_1^+) - h(\rho_1^-)},
			\end{align}
		}{
			\begin{align}
				\Theta_1
				&=
				e^{-(\rho_1^- + \rho_1^+)\uline{x}}\lrp{h(\rho_1^+) - h(\rho_1^-)}, \label{eq:discriminant_1_non-zero}
			\end{align}
		}    
		where $d_1 = x^\kappa-\uline{x} > 0$ and
		$h(t) = (1 - \frac{\mu}{t})(e^{-t d_1} - 1)$.
		Since $h$ is strictly decreasing, and $\rho_1^+ > \rho_1^-$,
		it follows from \eqref{eq:discriminant_1_non-zero} that $\Theta_1<0$.
		Hence \eqref{eq:linear_system_1_stable} has a unique solution.
		
		\item \emph{$\Theta_2$}:
		Similarly, $\Theta_2$ takes the form
		\details{
			\begin{align*}
				\Theta_2 
				= 
				\mu e^{-\rho_2^- x^\kappa}e^{-\rho_2^+ x^\lambda}
				\lrp{h_2(\rho_2^+) - e^{(\rho_2^+ - \rho_2^-)(x^\lambda-x^\kappa)}h_2(\rho_2^-)},
			\end{align*}
		}{
			\begin{align*}
				\Theta_2 
				= 
				\mu e^{-\rho_2^- x^\kappa}e^{-\rho_2^+ x^\lambda}
				\lrp{h_2(\rho_2^+) - e^{(\rho_2^+ - \rho_2^-)(x^\lambda-x^\kappa)}h_2(\rho_2^-)}.
			\end{align*}
		}
		where $h_2(t) = (1 - e^{-(\mu - t)d_2})/(\mu - t)$ and
		$d_2 = x^\lambda - x^\kappa > 0$.
		Using the representation
		$h_2(t) = \int_0^{d_2} e^{-(\mu - t)s}\,ds$,
		one obtains
		\details{
			\begin{align*}
				h_2(\rho_2^+) - e^{(\rho_2^+ - \rho_2^-)(x^\lambda-x^\kappa)}h_2(\rho_2^-)
				&= 
				\int_0^{d_2}
				e^{-(\mu - \rho_2^+)s}\lrp{1 - e^{(\rho_2^+ - \rho_2^-)(d_2 - s)}}\,\rmd s 
				< 0.
			\end{align*}
		}{
			\begin{align*}
				h_2(\rho_2^+) - e^{(\rho_2^+ - \rho_2^-)(x^\lambda-x^\kappa)}h_2(\rho_2^-)
				&=
				\int_0^{d_2}
				e^{-(\mu - \rho_2^+)s}\lrp{1 - e^{(\rho_2^+ - \rho_2^-)(d_2 - s)}}\,\rmd s 
				< 0.
			\end{align*}
		}
		Therefore $\Theta_2<0$, so \eqref{eq:linear_system_2_stable}
		also has a unique solution.
		
		\item \emph{$\Theta_3$}:
		Finally,
		\begin{align*}
			\Theta_3 &= 
			\alpha_{1,-}^{(3)}\alpha_{2,+}^{(3)}-\alpha_{1,+}^{(3)}\alpha_{2,-}^{(3)} 
			= e^{-\rho_3^-\oline{x}-\rho_3^+x^\lambda}\left(1 - e^{-(\rho_3^+-\rho_3^-)(\oline{x}-x^\lambda)}\right) > 0.
		\end{align*}
		Hence \eqref{eq:linear_system_3_stable} has a unique solution as well.
	\end{itemize}
	
	\bibliographystyle{apalike}
	\bibliography{Azze-etal_2026.bib}

\end{document}